\Crefname{Thm}{Theorem}{Theorems}
\setlist[enumerate,1]{label=\textnormal{(\arabic*)}, leftmargin=2.5em, itemsep=0.5em}
\setlist[enumerate,2]{label=\textnormal{(\alph*)}, leftmargin=*, itemsep=0.3em}
\newcommand\A{{\textstyle\bigwedge}}
\newcommand\Sm{\operatorname{Sm}}
\newcommand\gon{\operatorname{gon}}
\newcommand\coker{\operatorname{coker}}
\newcommand\im{\operatorname{im}}
\newcommand\GL{\operatorname{GL}}
\newcommand\Gr{\operatorname{Gr}}
\newcommand\id{\operatorname{id}}
\newcommand\rank{\operatorname{rank}}
\newcommand\sgn{\operatorname{sgn}}
\newcommand\II{\mathcal I}
\newcommand\OO{\mathcal O}
\newcommand\edet{\operatorname{edet}}
\newcommand\Pic{\operatorname{Pic}}
\newcommand\pr{\operatorname{pr}}
\tikzset{
  symbol/.style={
    draw=none,
    every to/.append style={
      edge node={node [sloped, allow upside down, auto=false]{$#1$}}}
  }
}
\numberwithin{equation}{section}
\theoremstyle{plain}
\newtheorem{Thm}{Theorem}[section]
\newtheorem{Prop}[Thm]{Proposition}
\newtheorem{Cor}[Thm]{Corollary}
\newtheorem{Lem}[Thm]{Lemma}
\newtheorem{Conj}[Thm]{Conjecture}
\theoremstyle{definition}
\newtheorem{Def}[Thm]{Definition}
\newtheorem{Q}[Thm]{Question}
\newtheorem{Notation}[Thm]{Notation}
\theoremstyle{remark}
\newtheorem{Rmk}[Thm]{Remark}
\newtheorem{Ex}[Thm]{Example}
\title{Quadratic equations of tangent varieties via four-way tensors of linear forms}
\author{Junho Choe}
\begin{document}
\maketitle

\begin{abstract}
In the present paper we construct quadratic equations and linear syzygies for tangent varieties using $4$-way tensors of linear forms and generalize this method to higher secant varieties of higher osculating varieties. 
Such equations extend the classical determinantal ones of higher secant varieties and span all the equations of the same degree for smooth projective curves completely embedded by sufficiently positive line bundles, proving a variant of the Eisenbud–Koh–Stillman conjecture on determinantal equations. 
On the other hand, our syzygies are compatible with the Green-Lazarsfeld classes and generate the corresponding Koszul cohomology groups for Segre varieties with a prescribed number of factors. 
To obtain these results we describe the equations of minimal possible degrees and reinterpret the Green-Lazarsfeld classes from the perspective of representation theory.
\end{abstract}

\tableofcontents

\section{Introduction}

Throughout this article we work over the field $\mathbb C$ of complex numbers, a \emph{variety} and a \emph{curve} are irreducible and reduced, and $\mathbb PV$ denotes the projective space of $1$-dimensional subspaces of $V^\ast$, hence $V$ is the space of linear forms on $\mathbb PV$. Let $X\subseteq\mathbb PV$ be a projective variety that is nondegenerate, which means that its linear span is $\langle X\rangle=\mathbb PV$. Write $I(X)=\bigoplus_{j\geq 2}I(X)_j$ for the ideal of $X\subseteq\mathbb PV$.

Our focus is on the \emph{tangent variety}
$$
\tau X=\overline{\bigcup_{z\in\Sm X}\mathbb T_zX}\subseteq\mathbb PV
$$
to $X\subseteq\mathbb PV$ and its quadratic equations, where $\Sm X$ is the smooth locus of $X$, and $\mathbb T_zX$ is the projective tangent space to $X$ at $z\in\Sm X$. More generally, we consider the \emph{$q$-secant variety} $\sigma_q\tau^kX$ to the $\emph{$k$-osculating variety}$ $\tau^kX$ to $X\subseteq\mathbb PV$ for arbitrary integers $q\geq 1$ and $k\geq 0$. 
Our treatment applies not only to $\tau X$ but also to $\sigma_q\tau^kX$, and in fact, taking $\sigma_q\tau^kX$ into account is technically required. See \Cref{Sec:TanSec} for the definition of $\sigma_q\tau^kX$, and note that $\sigma_1\tau^1X=\tau X$.

Suppose that we are given a linear map of the form
$$
T:V_1\otimes\cdots\otimes V_{2k+2}\to V,
$$ 
namely a \emph{$(2k+2)$-way tensor of linear forms}, and associate to it the following composition map $b_{q+1}(T)$.
$$
\begin{tikzcd}
    \A^{q+1}V_1\otimes\cdots\otimes\A^{q+1}V_{2k+2} \ar[rr,"b_{q+1}(T)"] \ar[dr,"\textup{canonical}",hook,swap] & & S^{q+1}V \\
    & S^{q+1}(V_1\otimes\cdots\otimes V_{2k+2}) \ar[ur,"T",swap]
\end{tikzcd}
$$
For example, if $k=0$, then $T$ corresponds to a matrix $M$ with entries in $V$, namely a \emph{matrix of linear forms}, and the image of $b_{q+1}(M)$ is spanned by the $(q+1)$-minors of $M$.

Now our first main theorem presents a systematic construction of quadratic equations for tangent varieties via $4$-way tensors of linear forms. It naturally generalizes the well-known technique \cite[Proposition 1.3]{MR944326} that yields determinantal equations for projective varieties and higher secant varieties using minors of matrices of linear forms.

\begin{Thm}\label{Thm:Main1}
Let $X\subseteq\mathbb PV$ be a nondegenerate projective variety and $T:V_1\otimes\cdots\otimes V_{2k+2}\to V$ be an \emph{$X$-multiplicative} $(2k+2)$-way tensor of linear forms for an integer $k\geq 0$. 
Then for every integer $q\geq 1$ the image of the map $b_{q+1}(T)$ lies in $I(\sigma_q\tau^kX)_{q+1}$, that is, we have a map
$$
b_{q+1}(T):\A^{q+1}V_1\otimes\cdots\otimes\A^{q+1}V_{2k+2}\to I(\sigma_q\tau^kX)_{q+1}.
$$
It is nontrivial whenever $\dim V_i\geq q+1$ for all $1\leq i\leq 2k+2$.
\end{Thm}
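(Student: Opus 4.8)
The plan is to deduce the containment from a pointwise vanishing statement on the affine cone. Writing $\widehat W\subseteq V^\ast$ for the affine cone over a subvariety $W\subseteq\mathbb PV$, the space $I(\sigma_q\tau^kX)_{q+1}$ consists exactly of the degree-$(q+1)$ forms vanishing on a dense subset of $\widehat{\sigma_q\tau^kX}$. A general such point is $\widehat y=\widehat y_1+\cdots+\widehat y_q$ with each $\widehat y_a$ in the $k$-th osculating space at a smooth point; taking a local analytic lift $\widehat\gamma_a(t)$ of an arc in $\widehat X$, that space is spanned by the jets $\widehat\gamma_a^{(0)}(0),\ldots,\widehat\gamma_a^{(k)}(0)$, so $\widehat y_a=\sum_{i=0}^k c_{a,i}\widehat\gamma_a^{(i)}(0)$. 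Fix a decomposable $\omega=\omega_1\otimes\cdots\otimes\omega_{2k+2}$ with $\omega_m=v_{m,1}\wedge\cdots\wedge v_{m,q+1}$, set $F=b_{q+1}(T)(\omega)$, and let $\tilde F$ be the symmetric $(q+1)$-linear polarization of $F$. Since $F(\widehat y)=\tilde F(\widehat y,\ldots,\widehat y)$, multilinear expansion shows it suffices to prove
\[
\tilde F\big(\widehat\gamma_{a_1}^{(i_1)}(0),\ldots,\widehat\gamma_{a_{q+1}}^{(i_{q+1})}(0)\big)=0\qquad\text{for all }a_\ell\in\{1,\ldots,q\},\ i_\ell\le k.
\]

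The computational heart is a factorization lemma for $\tilde F$ on honest points of $\widehat X$. Using the adjunction $\langle F,\widehat z^{\,q+1}\rangle=\langle\iota(\omega),(T^\ast\widehat z)^{q+1}\rangle$ together with the defining property of an $X$-multiplicative tensor—that $\Phi_{\widehat z}:=T^\ast(\widehat z)\in V_1^\ast\otimes\cdots\otimes V_{2k+2}^\ast$ is decomposable for every $\widehat z\in\widehat X$, with the pairing of $v\in V_m$ against the $m$-th factor of $\Phi_{\widehat z}$ equal to the evaluation $v(\widehat z)$—I will compute the pairing directly from the definition of the canonical inclusion $\iota$. I expect to obtain, for points $\widehat z_1,\ldots,\widehat z_{q+1}\in\widehat X$,
\[
\tilde F(\widehat z_1,\ldots,\widehat z_{q+1})=\frac{1}{(q+1)!}\prod_{m=1}^{2k+2}\det\big(v_{m,b}(\widehat z_j)\big)_{1\le j,b\le q+1}.
\]
The delicate point is the sign bookkeeping: in $\iota$ one slot carries the fixed permutation and a priori produces a permanent rather than a determinant, but after summing over the copy of $\SG_{q+1}$ that matches the symmetric factors against the rows, the sign $\sgn(\rho)^{2k+1}=\sgn(\rho)$ reinstates alternation in that slot as well. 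This is exactly where the evenness of $2k+2$ enters, and it is also what forces $F$ to vanish on $X$ itself (equal rows make each factor $\det$ vanish). I expect this lemma to be the main obstacle.

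With the lemma in hand the proof concludes quickly. By the pigeonhole principle two of the $q+1$ indices coincide, say $a_s=a_t=a$; realizing the two offending jets as $t_s$- and $t_t$-derivatives of the honest points $\widehat\gamma_a(t_s)$ and $\widehat\gamma_a(t_t)$ and applying the factorization, rows $s$ and $t$ of every matrix $\big(v_{m,b}(\,\cdot\,)\big)$ coincide when $t_s=t_t$. Hence each of the $2k+2$ determinants is divisible by $(t_s-t_t)$ and the product by $(t_s-t_t)^{2k+2}$. Since $i_s+i_t\le 2k<2k+2$, the derivative $\partial_{t_s}^{i_s}\partial_{t_t}^{i_t}$ of a function divisible by $(t_s-t_t)^{2k+2}$ vanishes at $t_s=t_t=0$, giving the required identity and thus $F\in I(\sigma_q\tau^kX)_{q+1}$.

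Finally, for nontriviality I will exhibit an $\omega$ with $F\ne0$ by evaluating the same factorization at $q+1$ general points $\widehat z_1,\ldots,\widehat z_{q+1}\in\widehat X$. When $\dim V_i\ge q+1$, the $X$-multiplicative structure guarantees that $q+1$ general points impose independent conditions on each $V_i$, so one may select $v_{i,1},\ldots,v_{i,q+1}\in V_i$ with $v_{i,b}(\widehat z_j)=\delta_{bj}$; then every factor $\det\big(v_{m,b}(\widehat z_j)\big)$ equals $1$, whence $\tilde F(\widehat z_1,\ldots,\widehat z_{q+1})=1/(q+1)!\neq0$ and $b_{q+1}(T)$ is nonzero.
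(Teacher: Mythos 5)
Your proof is correct in substance, but it follows a genuinely different route from the paper's official argument. The paper proves \Cref{Thm:Main1} by pulling back along the linear map $T^\ast:\mathbb PV\dashrightarrow\mathbb P(V_1\otimes\cdots\otimes V_{2k+2})$: $X$-multiplicativity puts $T^\ast(X)$ inside the Segre variety $Y$, hence $\sigma_q\tau^kX$ maps into $\sigma_q\tau^kY$, and the equations are obtained as pullbacks of $I(\sigma_q\tau^kY)_{q+1}=\A^{q+1}V_1\otimes\cdots\otimes\A^{q+1}V_{2k+2}$ (\Cref{Cor:SegreBottomEquation}, which rests on \Cref{Thm:BottomEqn}); nontriviality is then extracted from \Cref{Thm:GreenLazarsfeld}. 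Your argument instead works entirely on the source: the factorization $\tilde F(\widehat z_1,\ldots,\widehat z_{q+1})=c\prod_m\det(v_{m,b}(\widehat z_j))$ for points of $\widehat X$ is correct (the sign bookkeeping you worry about does work out: after reindexing by the polarization permutation $\sigma$, the signs from the $2k+2$ alternating factors contribute $\sgn(\sigma)^{2k+2}=1$, so every factor becomes a determinant), and the pigeonhole-plus-order-of-vanishing step is exactly the ``easy'' containment direction of \Cref{Thm:BottomEqn} made explicit: the product of $2k+2$ functions each vanishing where two rows collide is killed by derivatives of total order $\leq 2k$. This is essentially the alternative sketched in the paper's remark following the proof of \Cref{Thm:Main1,,Thm:Main2}, but carried out self-containedly, without invoking \Cref{Thm:BottomEqn}, the Oeding--Raicu computation, or the Lascoux/Green--Lazarsfeld machinery. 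What you lose is that the paper's route simultaneously produces the syzygy maps of \Cref{Thm:Main2}; what you gain is a short, elementary, and completely explicit proof of the degree-$(q+1)$ statement, including a cleaner nontriviality certificate ($q+1$ general points impose independent conditions on any linear system of dimension $\geq q+1$, so each determinant factor can be made the identity matrix).

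One step needs tightening when $\dim X>1$: the $k$-th osculating space at a point is \emph{not} spanned by the jets of a single arc, so you must either use a full local lift $\phi_a(t_1,\ldots,t_n)$ with multi-index derivatives $\partial^\alpha\phi_a(0)$, $|\alpha|\leq k$, or allow several arcs through each base point. The repair is harmless: when two slots $s,t$ share a base point, each determinant vanishes at $(t_s,t_t)=(0,0)$ because the two rows become proportional there, so the product lies in $\mathfrak m_0^{2k+2}$, and any partial derivative of total order $|\alpha_s|+|\alpha_t|\leq 2k$ of an element of $\mathfrak m_0^{2k+2}$ vanishes at the origin. This replaces the literal divisibility by $(t_s-t_t)^{2k+2}$, which only makes sense for one-dimensional parameters, and the rest of your argument is unchanged.
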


For the definition and structure of \emph{$X$-multiplicative} tensors of linear forms, consult with \Cref{Def:XMulti} and \Cref{Prop:StructureXmulti}. 
As a central example, if a very ample line bundle $L$ on $X$ is factored as $L=L_1\otimes\cdots\otimes L_{2k+2}$ in the Picard group $\Pic(X)$, then for the complete embedding $X\subseteq\mathbb PH^0(L)$ the multiplication map
$$
H^0(L_1)\otimes\cdots\otimes H^0(L_{2k+2})\to H^0(L)
$$
is an $X$-multiplicative $(2k+2)$-way tensor of linear forms. For a tensor of this type \Cref{Thm:Main1} works as follows.

\begin{Ex}
Let $T:T^4H^0(\OO_{\mathbb P^1}(1))\to H^0(\OO_{\mathbb P^1}(4))$ be the multiplication map of the fourth tensor power of $H^0(\OO_{\mathbb P^1}(1))$. Using an affine coordinate $x$ on $\mathbb P^1$ we write $H^0(\OO_{\mathbb P^1}(1))=\langle1,x\rangle$ and $H^0(\OO_{\mathbb P^1}(4))=\langle1,\ldots,x^4\rangle=:\langle x_0,\ldots,x_4\rangle$. Then the map $b_2(T)$ is determined by
\begin{align*}
b_2(T)\left((1\wedge x)\otimes\cdots\otimes(1\wedge x)\right) & =\sum_{i_1,\ldots,i_4=0}^1(-1)^{i_1+\cdots+i_4}T\left((x^{i_1}\otimes x^{1-i_1})\otimes\cdots\otimes(x^{i_4}\otimes x^{1-i_4})\right) \\
& =\sum_{i_1,\ldots,i_4=0}^1(-1)^{i_1+\cdots+i_4}x^{i_1+i_2+i_3+i_4}\cdot x^{4-(i_1+\cdots+i_4)} \\
& =\sum_{i=0}^4(-1)^i\binom{4}{i}x^i\cdot x^{4-i} \tag{$i:=i_1+i_2+i_3+i_4$} \\
& =x_0x_4-4x_1x_3+3x_2^2,
\end{align*}
up to scaling, in $S^2H^0(\OO_{\mathbb P^1}(4))$. This is the unique quadratic equation of $\tau\nu_4(\mathbb P^1)\subset\mathbb PH^0(\OO_{\mathbb P^1}(4))=\mathbb P^4$.
\end{Ex}

We point out that \Cref{Thm:Main1} can provide enough quadratic equations to span $I(\tau C)_2$ for an arbitrary completely embedded smooth projective curve $C\subset\mathbb PH^0(L)$ when $L$ has sufficiently large degree. Recall the fact \cite[Theorem 1]{MR944326} that if $C\subset\mathbb PH^0(L)$ has genus $g$ and degree $\geq 4g+2$, then $I(C)$ is generated by the $2$-minors of some matrix of linear forms, together with Eisenbud-Koh-Stillman's problem \cite[Remark on p.\ 518]{MR944326} below. 
\begin{Conj}[Eisenbud-Koh-Stillman conjecture about determinantal equations]
Let $C\subseteq\mathbb PH^0(C,L)$ be a complete embedding of a smooth projective curve of genus $g$, and consider a multiplication map $M:H^0(C,L_1)\otimes H^0(C,L_2)\to H^0(C,L)$ for a suitable decomposition $L_1\otimes L_2=L$ in $\Pic(C)$. If an integer $q\geq 1$ is small compared to a combination of $\deg L_1$, $\deg L_2$, and $g$, then the $(q+1)$-minors of $M$ cut out $\sigma_qC\subseteq\mathbb PH^0(C,L)$ ideal-theoretically.   
\end{Conj}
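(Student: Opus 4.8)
The plan is to prove the ideal equality $I_{q+1}(M)=I(\sigma_qC)$, where $I_{q+1}(M)$ is the ideal generated by the $(q+1)$-minors of $M$, by splitting it into two parts and building on \Cref{Thm:Main1} in the case $k=0$, in which the image of $b_{q+1}(M)$ is precisely the linear span of the $(q+1)$-minors. \Cref{Thm:Main1} already supplies the containment $\im b_{q+1}(M)\subseteq I(\sigma_qC)_{q+1}$, so since $I_{q+1}(M)=S\cdot\im b_{q+1}(M)$ is generated in degree $q+1$ by construction, it is enough to establish: (i) that under the numerical hypothesis $I(\sigma_qC)$ is itself generated in the single degree $q+1$, so that $I(\sigma_qC)=S\cdot I(\sigma_qC)_{q+1}$; and (ii) that $b_{q+1}(M)$ surjects onto $I(\sigma_qC)_{q+1}$. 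Granting (i) and (ii), the two ideals agree in degree $q+1$ and each is generated from that degree, hence they coincide, which is exactly the assertion that the minors cut out $\sigma_qC$ ideal-theoretically.

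For part (i) I would appeal to the regularity and arithmetic Cohen-Macaulayness theory of secant varieties to curves. When $\deg L$ is large compared with $g$ and $q$ -- which is what a suitable bound on $\deg L_1,\deg L_2,g$ encodes -- $\sigma_qC$ is projectively normal, arithmetically Cohen-Macaulay, and $(q+1)$-regular, so its saturated ideal is generated in degrees $\leq q+1$; combined with the fact that $\sigma_qC$ is nondegenerate and carries no equation of degree $\leq q$ (its minimal-degree equations have degree $q+1$, as for the generic rank-$\leq q$ locus), this forces generation purely in degree $q+1$. The entire numerical hypothesis of the conjecture is absorbed into validating this regularity estimate.

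For part (ii) the crux is to show $\im b_{q+1}(M)$ fills out all of $I(\sigma_qC)_{q+1}$. I would attack this through Koszul cohomology: under the positivity hypothesis $I(\sigma_qC)_{q+1}$ is identified with a Green-Lazarsfeld-type Koszul cohomology group of $C$, and the $(q+1)$-minors of $M$ are identified, via the representation-theoretic reinterpretation of $b_{q+1}$, with the corresponding Green-Lazarsfeld classes; generation of that cohomology group by these classes -- where the strong positivity enters a second time -- then yields surjectivity. To compute the rank contributed by the minors I would use that $\rank b_{q+1}(M)$ depends only on $M$ as a map of $\GL(V_1)\times\GL(V_2)$-representations, so that decomposing $\A^{q+1}V_1\otimes\A^{q+1}V_2$ and $S^{q+1}V$ into Schur components reduces the count to a model computation, for which the rational normal curve $C=\mathbb P^1$ (where the $H^0(L_i)$ are irreducible $\operatorname{SL}_2$-modules and $M$ is the classical Clebsch-Gordan multiplication) provides a universal answer to be matched against the Koszul-cohomological dimension on each isotypic piece.

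The genuine obstacle I anticipate is the surjectivity in (ii). Because $M$ is a highly structured multiplication matrix rather than a generic matrix of linear forms, its rank-$\leq q$ locus has the small codimension of $\sigma_qC$ instead of the expected generic determinantal codimension $(\dim V_1-q)(\dim V_2-q)$; consequently the classical perfection and primeness theorems for generic determinantal ideals are unavailable, and one cannot conclude formally that the minors generate everything by a Cohen-Macaulay count. The space $\coker b_{q+1}(M)$ must instead be controlled directly, by matching the representation-theoretic rank of the minor map against the dimension of $I(\sigma_qC)_{q+1}$ component by component; ensuring that no Schur summand of the equations is missed by the minors, uniformly as the genus varies, is the delicate point and is precisely what forces the strong positivity hypothesis appearing in the statement.
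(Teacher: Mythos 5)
Your two-step skeleton is exactly the one the paper uses: the conjecture is deduced from (i) the fact that $I(\sigma_qC)$ is generated in degree $q+1$ once $\deg L\geq 2g+2q$ (the paper cites \cite[Theorem 1.2(2)]{MR4160876} for this, as you do in substance), together with (ii) surjectivity of $b_{q+1}(M)$ onto $I(\sigma_qC)_{q+1}$, which is the $k=0$ case of \Cref{Thm:EKS}. You have also correctly located the crux in (ii) and correctly observed that generic determinantal ideal theory is unavailable. The problem is that your proposed mechanism for (ii) does not work.

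Your plan for (ii) is to match a ``representation-theoretic rank'' of $b_{q+1}(M)$ against the Schur decomposition of $I(\sigma_qC)_{q+1}$, using the rational normal curve and Clebsch--Gordan multiplication as a universal model. For a curve of positive genus there is no group acting on $H^0(L_1)$, $H^0(L_2)$, $H^0(L)$ for which the multiplication map $M$ is equivariant, so $I(\sigma_qC)_{q+1}$ admits no isotypic decomposition and the phrase ``no Schur summand of the equations is missed'' has no content; nor does $\rank b_{q+1}(M)$ depend only on abstract representation-theoretic data -- it depends on the geometry of $(C,L_1,L_2)$. A degeneration to $\mathbb P^1$ cannot rescue this, since both $\dim I(\sigma_qC)_{q+1}$ and the rank of the minor map jump in families and semicontinuity goes the wrong way for proving surjectivity. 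What the paper actually does is entirely different: it identifies $I(\sigma_qC)_{q+1}$ with $H^0(C_{q+1},S_{q+1,L}(-2\delta_{q+1}))=H^0(N_{q+1,L_1}\otimes N_{q+1,L_2})$ on the symmetric product $C_{q+1}$ (via \Cref{Thm:BottomEqn}), notes that $\A^{q+1}H^0(L_i)=H^0(N_{q+1,L_i})$, and proves that the multiplication map of global sections is onto by establishing the vanishing $H^1(N_{q+1,L_1}\otimes M_{q+1,L_2})=0$, which via \Cref{Lem:Leray} reduces to $H^1(C,\A^{q+1}M_{L_1}\otimes L_2)=0$ and is then settled by Green's vanishing theorem and Clifford's theorem under the hypothesis $\deg L\geq 2(2g+q)$ (\Cref{Prop:VanishingLB}). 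This cohomological vanishing on $C_{q+1}$ is the missing idea in your proposal, and it is also where the effective numerical bound genuinely enters.
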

It has been solved set-theoretically \cite{MR1272376} and scheme-theoretically \cite{MR2712616}. In fact, its ideal-theoretic validity is already well known to experts; thus, the main challenge lies in finding an effective bound.

The following can be considered as an affirmative answer to an analogue of the conjecture for tangent varieties $\tau C\subset\mathbb PH^0(L)$.

\begin{Thm}[Eisenbud-Koh-Stillman type result for tangent varieties]\label{Thm:EKS}
For all integers $q\geq 1$ and $k\geq0$ if $C$ is a smooth projective curve of genus $g$, and if $L$ is its very ample line bundle of degree 
$$
\deg L\geq(2k+2)(2g+q),
$$
then the complete embedding $C\subset\mathbb PH^0(C,L)$ admits a multiplication tensor 
$$
T:H^0(C,L_1)\otimes\cdots\otimes H^0(C,L_{2k+2})\to H^0(C,L),
$$
where $L_1\otimes\cdots\otimes L_{2k+2}=L$ in $\Pic(C)$, such that the induced map $b_{q+1}(T)$ is onto:
    $$
    b_{q+1}(T):\A^{q+1}H^0(C,L_1)\otimes\cdots\otimes\A^{q+1}H^0(C,L_{2k+2})\twoheadrightarrow I(\sigma_q\tau^kC)_{q+1}.
    $$
\end{Thm}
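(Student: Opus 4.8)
The plan is to fix a convenient factorization of $L$ and then prove that the map $b_{q+1}(T)$ of \Cref{Thm:Main1} is \emph{onto}; that its image is contained in $I(\sigma_q\tau^kC)_{q+1}$ is already supplied by \Cref{Thm:Main1}. First I would choose $L_1,\dots,L_{2k+2}$ with $L_1\otimes\cdots\otimes L_{2k+2}=L$ and $\deg L_i\geq 2g+q$ for every $i$; this is possible exactly because $\deg L\geq(2k+2)(2g+q)$, placing any surplus degree on a single factor. Each such $L_i$ is then nonspecial with $h^0(L_i)=\deg L_i-g+1\geq q+1$, so the source of $b_{q+1}(T)$ is nonzero, and each $L_i$ is \emph{$q$-very ample}, i.e.\ $H^0(L_i)\to H^0(L_i|_\xi)$ is onto for every length-$(q+1)$ subscheme $\xi\subset C$. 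As observed after \Cref{Thm:Main1}, the multiplication tensor $T$ is $X$-multiplicative, so \Cref{Thm:Main1} applies and the entire problem is reduced to the surjectivity of $b_{q+1}(T)$.

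To pin down the target I would use the description of $\sigma_q\tau^kC$ as the closure of the union of the spans $\langle(k+1)z_1+\cdots+(k+1)z_q\rangle$ over $(z_1,\dots,z_q)\in C^{(q)}$. Because $\deg L$ is so large, $H^0(L)\to H^0(L|_D)$ is surjective for every divisor $D=(k+1)(z_1+\cdots+z_q)$, so each span has the expected dimension $q(k+1)-1$ and a form $F\in S^{q+1}H^0(L)$ lies in $I(\sigma_q\tau^kC)_{q+1}$ if and only if the restriction $r_D(F)\in S^{q+1}H^0(L|_D)$ vanishes for all such $D$. Assembling the $r_D$ over the universal divisor on $C^{(q)}$ exhibits $I(\sigma_q\tau^kC)_{q+1}$ as the kernel of a single evaluation map $r\colon S^{q+1}H^0(L)\to H^0(C^{(q)},\mathcal E)$, where $\mathcal E$ is the vector bundle with fibre $S^{q+1}H^0(L|_D)$ (locally free of constant rank by the same positivity). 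The strategy is then a two-sided count: bound $\dim I(\sigma_q\tau^kC)_{q+1}=\dim\ker r$ from above by computing $h^0(\mathcal E)$ together with the surjectivity of $r$ through a Riemann--Roch and vanishing computation on $C^{(q)}$, and bound $\dim\im b_{q+1}(T)$ from below by exhibiting sufficiently many independent images. For the lower bound the $q$-very ampleness of the factors is decisive: evaluating a product $\omega_1\otimes\cdots\otimes\omega_{2k+2}$ against a general configuration produces, through the minor expansion built into $b_{q+1}(T)$, equations that can be shown independent by a triangularity argument in a suitable monomial basis. I would run this directly, or by induction on $q$, the passage from $\sigma_q$ to $\sigma_{q-1}$ lowering each $\deg L_i$ by one (so that the bound drops by exactly $2k+2$) via a general hyperplane section of the underlying determinantal construction.

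The main obstacle is to make the two bounds meet, that is, the surjectivity itself, and this is precisely where the hypothesis $\deg L\geq(2k+2)(2g+q)$ is consumed. Two features make it delicate. First, for $k>0$ a single $(k+1)$-fold osculation is separated by no individual factor $L_i$; it is instead distributed across the $2k+2=2(k+1)$ factors, and one must verify that the products of $(q+1)$-minors produced by $b_{q+1}(T)$ genuinely detect osculation to order $k+1$, and not mere incidence, which is the heart of why $2k+2$ factors rather than two are required. Second, the upper bound on $\dim\ker r$ hinges on the surjectivity of $r$, hence on the vanishing of an obstructing first cohomology group on the symmetric product $C^{(q)}$; this vanishing holds exactly once every factor reaches degree $2g+q$, so the estimate is tight and leaves no slack. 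Securing this cohomological vanishing on $C^{(q)}$, in tandem with the independence input for the lower bound, is the technical core of the proof.
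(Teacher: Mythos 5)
Your setup matches the paper as far as it goes: choosing a factorization $L=L_1\otimes\cdots\otimes L_{2k+2}$ with $\deg L_i\geq 2g+q$ for every $i$, and getting the containment $\im b_{q+1}(T)\subseteq I(\sigma_q\tau^kC)_{q+1}$ from \Cref{Thm:Main1}, are both the right first moves. But the entire content of the theorem is the surjectivity, and your proposal does not prove it. You describe a two-sided dimension count — an upper bound for $\dim I(\sigma_q\tau^kC)_{q+1}$ via an evaluation map $r$ into a bundle $\mathcal E$ on $C^{(q)}$, and a lower bound for $\dim\im b_{q+1}(T)$ via an unspecified triangularity argument — and then explicitly defer ``making the two bounds meet'' as the technical core. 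Neither bound is computed, no mechanism is offered that would force them to coincide, and the strategy as stated requires an exact determination of $\dim I(\sigma_q\tau^kC)_{q+1}$, which is essentially as hard as the theorem itself. This is a genuine gap, not a routine verification left to the reader.

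The paper closes the gap by a different mechanism. Using \Cref{Thm:BottomEqn} one identifies $I(\sigma_q\tau^kC)_{q+1}=H^0(C_{q+1},S_{q+1,L}(-(2k+2)\delta_{q+1}))$ on the $(q+1)$-st symmetric product (not the $q$-th, which is where your parameter space lives), while $\A^{q+1}H^0(L_i)=H^0(N_{q+1,L_i})$ and $N_{q+1,L_1}\otimes\cdots\otimes N_{q+1,L_{2k+2}}=S_{q+1,L}(-(2k+2)\delta_{q+1})$; the claim thus becomes surjectivity of a multiplication map of global sections of line bundles on $C_{q+1}$. Each $N_{q+1,L_j}$ is resolved by the complex $S^\ast M_{q+1,L_j}\otimes\A^{q+1-\ast}H^0(L_j)$ built from the kernel bundle $M_{q+1,L_j}$, and surjectivity reduces to the vanishings $H^i(N_{q+1,L_1}\otimes M_{q+1,B_1}\otimes\cdots\otimes M_{q+1,B_i})=0$ for $i\geq1$, which \Cref{Lem:Leray} converts into $H^i(C^i,\A^{q+1}Q^i_{L_1}\otimes B_1\boxtimes\cdots\boxtimes B_i)=0$ and \Cref{Prop:VanishingLB} establishes by induction on $i$ using Green's vanishing theorem and Clifford's theorem. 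Note also that the degree bound alone does not suffice for this last vanishing: the paper additionally arranges that in the boundary case $g\geq1$, $\deg L_1=\deg L_2=2g+q$ the bundles $L_1$ and $L_2$ are non-isomorphic, so as to exclude the Clifford-extremal configuration; your factorization makes no such provision, and without it even the correct strategy can fail at the edge of the stated degree bound.
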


In the case $k=0$ it resolves the Eisenbud-Koh-Stillman conjecture\footnote{We have been informed by Daniele Agostini and Jinhyung Park that they carried out the same study independently. See \cite[Theorems A and B]{agostini2025determinantal}.} since the minimal generators of $I(\sigma_qC)$ have degree $q+1$ whenever $L$ has degree $\geq 2g+2q$ by \cite[Theorem 1.2(2)]{MR4160876}, and its lower bound condition $\deg L\geq(2k+2)(2g+q)$ is very reasonable along the line of Eisenbud-Koh-Stillman's one $\deg L\geq 2(2g+1)$ for the case $(q,k)=(1,0)$.\footnote{If $k=0$, then the lower bound $\deg L\geq 3g+3q+1$ also works by \cite[Theorem A]{agostini2025determinantal}, yielding an improvement when $q$ is small with respect to $g$.}

Furthermore, the quadratic equations of tangent varieties in \Cref{Thm:Main1} lead us to an unexpected byproduct about the nondefectiveness of $\tau X$ and $\sigma_2X$, based on Fulton-Hansen's result \cite[Corollary 4]{MR541334} (cf.\ \cite[Theorem 2]{MR665860}). Compare it with the fact that the conclusion below holds if the embedding line bundle $L$ is $3$-very ample, meaning that it separates every finite subscheme of length $4$ in $X$.

\begin{Cor}\label{Cor:Byproduct}
Let $X$ be a smooth projective variety and $L$ be a very ample line bundle on $X$. Then for the complete embedding $X\subseteq\mathbb PH^0(X,L)$ we obtain
$$
\dim\tau X=2\dim X\quad\text{and}\quad\dim\sigma_2X=2\dim X+1
$$
as long as $L$ allows a decomposition
$$
L=L_1\otimes L_2\otimes L_3\otimes L_4
$$ 
into four line bundles satisfying $\dim|L_i|\geq 1$ for all $1\leq i\leq4$.
\end{Cor}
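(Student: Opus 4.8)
The plan is to reduce the statement to the dichotomy furnished by Fulton--Hansen's theorem \cite[Corollary 4]{MR541334} (cf.\ \cite[Theorem 2]{MR665860}): for a smooth nondegenerate variety $X\subseteq\mathbb PV$ of dimension $n$, either
$$
\dim\tau X=2n\quad\text{and}\quad\dim\sigma_2X=2n+1,
$$
or else $\tau X=\sigma_2X$. Since the complete embedding by the very ample bundle $L$ is automatically nondegenerate and $X$ is smooth, this dichotomy applies directly, and it suffices to rule out the coincidence $\tau X=\sigma_2X$. I would achieve this by exhibiting a quadric hypersurface that contains $\tau X$ but not $\sigma_2X$.

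For the inclusion into a quadric I would feed the given factorization into \Cref{Thm:Main1}. Writing $V_i=H^0(X,L_i)$, the hypothesis $\dim|L_i|\geq1$ reads exactly as $\dim V_i\geq2$ for $1\leq i\leq4$. By the discussion following \Cref{Thm:Main1}, the multiplication map $T:V_1\otimes V_2\otimes V_3\otimes V_4\to H^0(X,L)=V$ is an $X$-multiplicative $4$-way tensor of linear forms, i.e.\ the case $k=1$. Applying \Cref{Thm:Main1} with $q=1$ then yields a nontrivial map
$$
b_2(T):\A^2V_1\otimes\A^2V_2\otimes\A^2V_3\otimes\A^2V_4\to I(\sigma_1\tau^1X)_2=I(\tau X)_2,
$$
the nontriviality being guaranteed precisely by $\dim V_i\geq q+1=2$. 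Hence $I(\tau X)_2\neq0$, so $\tau X$ lies on a genuine quadric.

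For the separation I would verify that $\sigma_2X$ lies on no quadric at all, i.e.\ $I(\sigma_2X)_2=0$. Suppose $F\in S^2V$ cut out a quadric containing $\sigma_2X$; let $B$ be the symmetric bilinear form polarizing $F$. Then $F$ vanishes on every secant line $\overline{pq}$ with $p,q\in X$, and restricting $F$ to such a line and reading off its coefficients forces $F(p)=F(q)=0$ together with $B(p,q)=0$. As $p,q$ range over $X$, their representatives span $V^\ast$ by nondegeneracy, so bilinearity gives $B\equiv0$ and hence $F=0$. Combining the two halves, the quadric produced by $b_2(T)$ contains $\tau X$ but not $\sigma_2X$; therefore $\tau X\neq\sigma_2X$, and the first alternative of Fulton--Hansen delivers the stated dimensions $\dim\tau X=2\dim X$ and $\dim\sigma_2X=2\dim X+1$.

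The only genuine content beyond bookkeeping is this separation step: everything hinges on the contrast that $\sigma_2X$ carries no quadratic equation while \Cref{Thm:Main1} forces $\tau X$ always to carry one under the factorization hypothesis. I expect the points needing care are merely confirming that the indices line up ($q=1$ and $k=1$, so $2k+2=4$ and $q+1=2$) and that the complete-linear-system embedding is nondegenerate and smooth so that Fulton--Hansen is applicable; notably, no direct defectivity computation for $\tau X$ or $\sigma_2X$ is required.
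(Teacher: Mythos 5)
Your proof is correct and follows essentially the same route as the paper: apply \Cref{Thm:Main1} with $q=1$, $k=1$ to produce a nonzero quadric through $\tau X$, observe that $\sigma_2X$ lies on no quadric, and conclude via the Fulton--Hansen dichotomy that the strict inclusion $\tau X\subsetneq\sigma_2X$ forces both dimensions to be the expected ones. The only difference is that you spell out the elementary polarization argument for $I(\sigma_2X)_2=0$, which the paper simply cites as a known property of secant varieties.
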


It can be rephrased as follows: If $L$ is more positive than a product of four pencils, then $\tau X$ and $\sigma_2X$ have the expected dimensions.

The next main theorem tells us that the quadratic equations of $\tau X\subseteq\mathbb PV$ above carry rich \emph{linear syzygies}. Consider the minimal free resolution
$$
\begin{tikzcd}
    F_0 & \ar[l] F_1 & \ar[l] \cdots & \ar[l] F_i=\displaystyle{\bigoplus_{j\in\mathbb Z}}K_{i,j}(I(\tau X))\otimes_{\mathbb C}S(-i-j) & \ar[l] \cdots
\end{tikzcd}
$$
of $I(\tau X)$ over the symmetric algebra $S:=S^\ast V$, where the vector spaces $K_{i,j}(I(\tau X))$, called the \emph{Koszul cohomology groups} of $I(\tau X)$, encode information about bases of the free modules $F_i$. \emph{Linear syzygies} of $\tau X\subseteq\mathbb PV$ are, roughly speaking, elements of the $K_{p,2}(I(\tau X))$ and explain the interaction between the quadratic equations of $\tau X\subseteq\mathbb PV$:
$$
K_{p,2}(I(\tau X))=\ker(\A^pV\otimes I(\tau X)_2\to\A^{p-1}V\otimes I(\tau X)_3)
$$
for a Koszul type differential.

To describe the aforementioned linear syzygies, recall \emph{Schur modules} $S^\lambda U$, of a vector space $U$, indexed by partitions $\lambda=(\lambda_1\geq\lambda_2\geq\cdots)\vdash t$ of integers $t\geq 0$, where if $\lambda=(t)$ (resp.\ $\lambda=(1,\ldots,1)\vdash t$), then we obtain the $t$-th symmetric power $S^tU$ (resp.\ the $t$-th exterior power $\A^tU$). Unless $\lambda_{\dim U}=0$, the Schur module $S^\lambda U$ forms an irreducible finite-dimensional representation of the general linear group $\GL(U)$. Let $p\geq 0$ be an integer and $p_\ast=(p_1,\ldots,p_{2k+2})\vdash p$ be a $(2k+2)$-tuple of nonnegative integers that sum up to $p$:
$$
p_1+\cdots+p_{2k+2}=p,
$$
namely an ordered partition of $p$. Now we assign to $p_\ast$ the tensor product
$$
B_{p_\ast,q+1}(V_1,\ldots,V_{2k+2})=S^{\lambda^1}V_1\otimes\cdots\otimes S^{\lambda^{2k+2}}V_{2k+2},\quad\lambda^i:=(p_i+1,1,\ldots,1)\vdash p+q+1,
$$
of Schur modules of vector spaces $V_1,\ldots,V_{2k+2}$.

\begin{Thm}\label{Thm:Main2}
Let $X\subseteq\mathbb PV$ be a nondegenerate projective variety, and take integers $p\geq 0$, $q\geq 1$, and $k\geq 0$. Then given an $X$-multiplicative $(2k+2)$-way tensor $T:V_1\otimes\cdots\otimes V_{2k+2}\to V$ of linear forms, if 
\begin{equation}\label{Cond:Nonvanishing}
\dim V_1\geq p-p_1+q+1,\quad\ldots,\quad\dim V_{2k+2}\geq p-p_{2k+2}+q+1
\end{equation}
for an ordered partition $p_\ast=(p_1,\ldots,p_{2k+2})\vdash p$, then there is a nonzero natural map
$$
b_{p_\ast,q+1}(T):B_{p_\ast,q+1}(V_1,\ldots,V_{2k+2})\to K_{p,q+1}(I(\sigma_q\tau^kX)).
$$
\end{Thm}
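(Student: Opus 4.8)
The plan is to realize $b_{p_\ast,q+1}(T)$ as an explicit equivariant map and to verify the Koszul-cocycle condition by passing to the universal exterior--symmetric complex of the single space $W:=V_1\otimes\cdots\otimes V_{2k+2}$. The key reduction is that $b_{q+1}(T)$ factors as the canonical map $\iota\colon\A^{q+1}V_1\otimes\cdots\otimes\A^{q+1}V_{2k+2}\to S^{q+1}W$ followed by $S^{q+1}T$, and that $S^\ast T\colon S^\ast W\to S^\ast V$ is a homomorphism of graded algebras. Hence multiplication by a linear form $T(w)$, $w\in W$, is compatible with $S^\ast T$, and by naturality of the Koszul differential the map $\A^pT\otimes S^{q+1}T$ intertwines the universal differential $\widetilde\Phi\colon\A^pW\otimes S^{q+1}W\to\A^{p-1}W\otimes S^{q+2}W$ with the differential $\A^pV\otimes I(\sigma_q\tau^kX)_{q+1}\to\A^{p-1}V\otimes I(\sigma_q\tau^kX)_{q+2}$ computing $K_{p,q+1}$. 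Thus it suffices to build a $\GL(V_1)\times\cdots\times\GL(V_{2k+2})$-equivariant map $\widetilde b\colon B_{p_\ast,q+1}(V_1,\ldots,V_{2k+2})\to\A^pW\otimes S^{q+1}W$ whose image lies in $\ker\widetilde\Phi$; composing with $\A^pT\otimes S^{q+1}T$ and using \Cref{Thm:Main1} (so that the $S^{q+1}W$-component lands in $I(\sigma_q\tau^kX)_{q+1}$) then produces the desired class in $K_{p,q+1}(I(\sigma_q\tau^kX))$.

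To construct $\widetilde b$ I use, for each $i$, the canonical summand inclusion $S^{\lambda^i}V_i\hookrightarrow\A^{\,p+q-p_i+1}V_i\otimes S^{p_i}V_i$ attached to the hook $\lambda^i=(p_i+1,1^{\,p+q-p_i})$, followed by the comultiplication $\A^{\,p+q-p_i+1}V_i\to\A^{q+1}V_i\otimes\A^{\,p-p_i}V_i$ that peels off $q+1$ exterior slots. This splits the $i$-th factor into a generator part $\A^{q+1}V_i$ and a grid part $\A^{\,p-p_i}V_i\otimes S^{p_i}V_i$ of total degree $p$. Regrouping, I feed the generator parts through $\iota$ into $S^{q+1}W$, and I map the grid parts through an antisymmetrization $\mu\colon\bigotimes_i\big(\A^{\,p-p_i}V_i\otimes S^{p_i}V_i\big)\hookrightarrow W^{\otimes p}\twoheadrightarrow\A^pW$ that reads the $i$-th tensor slot as $p$ entries of $V_i$ ($p-p_i$ of them antisymmetrized, $p_i$ symmetrized) and wedges the resulting $p$ vectors of $W$. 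All of these are natural, so $\widetilde b$ is equivariant; a short multilinear check shows it is independent of the choices and well defined.

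The heart of the proof---showing $\im\widetilde b\subseteq\ker\widetilde\Phi$ and that $\widetilde b\ne0$---I handle simultaneously by an equivariance argument. Since $B_{p_\ast,q+1}$ is an irreducible representation of $\GL(V_1)\times\cdots\times\GL(V_{2k+2})$ and $\widetilde\Phi\circ\widetilde b$ is equivariant, this composite is either injective or identically zero; hence it is enough to evaluate on a single highest-weight vector $\xi_0$. The condition \eqref{Cond:Nonvanishing} says exactly that $\dim V_i$ is at least the number $p+q-p_i+1$ of rows of $\lambda^i$, so $\xi_0\ne0$ and, after a direct computation, $\widetilde b(\xi_0)$ is a nonzero sum of wedges-times-minors in $\A^pW\otimes S^{q+1}W$; applying $\A^pT\otimes S^{q+1}T$ and using $\dim V_i\ge q+1$ keeps it nonzero, which gives both the nontriviality of $b_{p_\ast,q+1}(T)$ and (since $I(\sigma_q\tau^kX)_q=0$, so that there are no Koszul coboundaries into degree $q+1$) that it represents a nonzero class. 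It remains to check $\widetilde\Phi\,\widetilde b(\xi_0)=0$: applying $\widetilde\Phi$ removes one of the $p$ wedge vectors of $\mu$ and multiplies it into the minor $\iota(\cdots)$, and one must see that the signed sum over the $p$ removals cancels.

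The main obstacle is precisely this last cancellation, a higher-dimensional Laplace-type identity. In the matrix case $k=0$, $p=1$ it is exactly the vanishing of the Laplace expansion of a $(q+2)\times(q+2)$ matrix with a repeated row; in general the repeat is enforced by the symmetrization of the $S^{p_i}V_i$-slots against the exterior slots prescribed by the hook $\lambda^i$. I expect the clean way to organize this is to extend $\widetilde b$ to the ambient space $\bigotimes_i\big(\A^{\,p+q-p_i+1}V_i\otimes S^{p_i}V_i\big)$ and to show that $\widetilde\Phi\circ\widetilde b$ factors through $\sum_i\pm s_i$, where $s_i$ is the raising map that symmetrizes one $S^{p_i}V_i$-factor into the exterior part and whose common kernel is $\bigcap_i\ker s_i=B_{p_\ast,q+1}$; feeding $\xi_0\in\bigcap_i\ker s_i$ then yields $0$. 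The delicate point is coordinating the signs coming from the comultiplication, from the wedge in $\mu$, and from the Koszul differential, and verifying that the off-diagonal contributions---where $\widetilde\Phi$ multiplies a grid vector into the generator rather than into a matching symmetric slot---reorganize into the $s_i$ exactly rather than leaving a remainder.
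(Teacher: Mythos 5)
Your overall strategy---build a universal class $\widetilde b(\xi)\in\A^pW\otimes S^{q+1}W$ for $W:=V_1\otimes\cdots\otimes V_{2k+2}$, verify it is a Koszul cocycle, and push it forward along $\A^pT\otimes S^{q+1}T$---is sound in outline and is in fact close in spirit to the paper, which realizes the same universal class as a bottom syzygy of the Segre variety $\mathbb PV_1\times\cdots\times\mathbb PV_{2k+2}$ (\Cref{Thm:SegreDecompo}) and pulls it back along the linear projection $T^\ast$. The problem is that the step you yourself call the heart of the proof, namely $\widetilde\Phi\circ\widetilde b=0$, is exactly the nontrivial content, and you leave it unproven: you say you ``expect'' the off-diagonal contributions to reorganize into the raising maps $s_i$, and you explicitly flag that the signs from the comultiplication, the wedge, and the Koszul differential have not been coordinated. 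Schur's lemma does reduce you to a single highest-weight vector, but it does not evaluate it; and the cheap representation-theoretic escape (showing that $B_{p_\ast,q+1}$ simply cannot occur in $\A^{p-1}W\otimes S^{q+2}W$) is not available, since by the Cauchy--Littlewood formula hook Schur modules of the $V_i$ do occur in that target. The paper carries out the required multilinear computation once and for all in \Cref{Prop:Product}\ref{Prop:ProductItem1} (a Leibniz-rule calculation establishing that the products $\boxtimes^0$ land in $K_{a+b,q+1}(U,W)$) and then iterates it through \Cref{Thm:GreenLazarsfeld} and \Cref{Thm:SegreDecompo}. Without an analogue of that computation your argument is incomplete at its central point.

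The second gap is the nonvanishing. Your assertion that applying $\A^pT\otimes S^{q+1}T$ ``keeps it nonzero'' because $\dim V_i\geq q+1$ is not justified and is false as a general principle: $T$ typically has a large kernel (already for a multiplication map $H^0(L_1)\otimes H^0(L_2)\to H^0(L)$ on a curve), so a nonzero element upstairs can perfectly well die downstairs. The hypothesis that must enter here is the $X$-multiplicativity of $T$: the paper uses \Cref{Prop:StructureXmulti} to realize $T$ as a multiplication of line bundle sections on an open subset of $X$, evaluates the candidate syzygy at $a=p-p_i$ suitably general points of $X$, and invokes the $1$-genericity statement of \Cref{Thm:Eisenbud1Generic} to conclude that the resulting coefficient is nonzero; this is the content of \Cref{Thm:GreenLazarsfeld}\ref{Thm:GreenLazarsfeldItem2}. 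Some geometric input of this kind is unavoidable, and your proposal contains none. (Your remark that $I(\sigma_q\tau^kX)_q=0$, so that a nonzero cycle already represents a nonzero Koszul class, is correct---but only once the cycle itself has been shown to survive.)
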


Note that for every integer $j<q+1$ the component $I(\sigma_q\tau^kX)_j$ always vanishes, hence $K_{i,j}(I(\sigma_q\tau^kX))=0$ for all $i\geq0$, and observe that Condition \eqref{Cond:Nonvanishing} is nothing but the nonvanishing of $B_{p_\ast,q+1}(V_1,\ldots,V_{2k+2})$.

The naturality of $b_{p_\ast,q+1}(T)$ can be demonstrated by the case of Segre varieties of the form 
$$
\mathbb PV_1\times\cdots\times\mathbb PV_{2k+2}\subset\mathbb P(V_1\otimes\cdots\otimes V_{2k+2}),
$$
including the sufficiency.

\begin{Thm}\label{Thm:SegreDecompo}
Let $p\geq 0$, $q\geq 1$, and $k\geq 0$ be integers. Then for a Segre variety $Y=\mathbb PV_1\times\cdots\times\mathbb PV_{2k+2}\subseteq\mathbb P(V_1\otimes\cdots\otimes V_{2k+2})$ with $2k+2$ factors, we have a Schur module decomposition
$$
K_{p,q+1}(I(\sigma_q\tau^kY))=\bigoplus_{p_\ast\vdash p}B_{p_\ast,q+1}(V_1,\ldots,V_{2k+2})
$$
over $\GL(V_1)\times\cdots\times\GL(V_{2k+2})$, where $p_\ast$ runs over all ordered partitions of $p$ with $2k+2$ entries. In particular, 
\begin{enumerate}
    \item\label{ThmItem:SegreDecompoTrivial} $I(\sigma_q\tau^kY)_{q+1}=\A^{q+1}V_1\otimes\cdots\otimes\A^{q+1}V_{2k+2}$, and
    \item\label{ThmItem:SegreDecompoUpper} $K_{p,q+1}(I(\sigma_q\tau^kY))=0$ for all $p>\frac{1}{2k+1}\sum_{i=1}^{2k+2}(\dim V_i-q-1)$.
\end{enumerate}
\end{Thm}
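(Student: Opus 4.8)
The plan is to establish the asserted Schur module decomposition first, since parts~\ref{ThmItem:SegreDecompoTrivial} and \ref{ThmItem:SegreDecompoUpper} then follow formally. Write $W=V_1\otimes\cdots\otimes V_{2k+2}$, so $S=S^\ast W$. Part~\ref{ThmItem:SegreDecompoTrivial} is the case $p=0$: because $I(\sigma_q\tau^kY)_q=0$, one has $K_{0,q+1}(I(\sigma_q\tau^kY))=I(\sigma_q\tau^kY)_{q+1}$, and the only ordered partition of $0$ is $(0,\dots,0)$, for which $B_{(0,\dots,0),q+1}=\A^{q+1}V_1\otimes\cdots\otimes\A^{q+1}V_{2k+2}$. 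For part~\ref{ThmItem:SegreDecompoUpper}, the summand $B_{p_\ast,q+1}=\bigotimes_i S^{\lambda^i}V_i$ with $\lambda^i=(p_i+1,1^{p-p_i+q})$ is nonzero exactly when each hook fits in $V_i$, i.e. when Condition \eqref{Cond:Nonvanishing} holds; summing the $2k+2$ inequalities $\dim V_i\geq p-p_i+q+1$ and using $\sum_i p_i=p$ gives $\sum_i\dim V_i\geq(2k+2)(q+1)+(2k+1)p$, so every summand vanishes once $p>\frac1{2k+1}\sum_i(\dim V_i-q-1)$, whence the decomposition forces $K_{p,q+1}(I(\sigma_q\tau^kY))=0$ there.

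For the decomposition itself I would prove the two inclusions separately, and the inclusion $\supseteq$ is essentially free from \Cref{Thm:Main2} applied to the Segre multiplication tensor $T=\id\colon V_1\otimes\cdots\otimes V_{2k+2}\to W$ (which is $Y$-multiplicative). For each ordered partition $p_\ast\vdash p$ with $B_{p_\ast,q+1}\neq 0$, that theorem supplies a nonzero $\GL(V_1)\times\cdots\times\GL(V_{2k+2})$-equivariant map $B_{p_\ast,q+1}\to K_{p,q+1}(I(\sigma_q\tau^kY))$. Each $B_{p_\ast,q+1}$ is an irreducible representation of the product group, and distinct $p_\ast$ yield non-isomorphic tuples of hooks, hence non-isomorphic irreducibles; by Schur's lemma a nonzero equivariant map out of an irreducible is injective, and images of pairwise non-isomorphic irreducibles are independent, so the assembled map $\bigoplus_{p_\ast}b_{p_\ast,q+1}(\id)$ is injective. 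This realizes $\bigoplus_{p_\ast}B_{p_\ast,q+1}$ as a subrepresentation of $K_{p,q+1}(I(\sigma_q\tau^kY))$.

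The heart of the matter is the reverse inclusion: that $K_{p,q+1}(I(\sigma_q\tau^kY))$ contains no further irreducible constituent and that each $B_{p_\ast,q+1}$ occurs with multiplicity exactly one. My plan is to compute this group directly as a $\GL(V_1)\times\cdots\times\GL(V_{2k+2})$-representation. Since $W=V_1\otimes\cdots\otimes V_{2k+2}$, every element of $S^dW$ has degree $d$ in each factor $V_i$, so the resolution is multigraded and the $(q+1)$-linear strand at homological step $p$ is concentrated in degree $d=p+q+1$; there the predicted answer is exactly the sum over tuples of hooks $(\lambda^1,\dots,\lambda^{2k+2})$ of common size $d$ whose arms sum to $p=d-q-1$. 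To produce these hooks I would set up a Kempf–Lascoux–Weyman type desingularization of $\sigma_q\tau^kY$ by a homogeneous vector bundle over a product of partial flag varieties of the individual $V_i$, so that the geometric technique computes $\mathrm{Tor}^S_\bullet$ through cohomology of exterior powers of the associated bundle; those exterior powers factor as box tensor products over the $2k+2$ factors, and Bott's theorem on each factor returns precisely the single-hook Schur functors $S^{\lambda^i}V_i$ with the prescribed leg length $p-p_i+q$, while the splitting of the homological degree as $p=\sum_i p_i$ yields the sum over ordered partitions and pins the multiplicity to one. As an alternative route avoiding geometry, one can compute the multiplicity of each candidate irreducible $S^{\mu^1}V_1\otimes\cdots\otimes S^{\mu^{2k+2}}V_{2k+2}$ in the kernel $K_{p,q+1}(I(\sigma_q\tau^kY))=\ker(\A^pW\otimes I(\sigma_q\tau^kY)_{q+1}\to\A^{p-1}W\otimes I(\sigma_q\tau^kY)_{q+2})$ by a highest-weight-vector count organized factor by factor, using the explicit form of the maps $b_{p_\ast,q+1}(\id)$.

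I expect this reverse inclusion to be the main obstacle. Concretely, the difficulty is to identify the correct desingularization of $\sigma_q\tau^kY$ and to prove the requisite acyclicity, so that the Weyman spectral sequence degenerates and the minimal $(q+1)$-strand is exactly the claimed sum with no extra constituents and no higher multiplicities; equivalently, that the multi-factor Kronecker/plethysm bookkeeping across all $2k+2$ factors produces no unexpected cancellation or survival. Because the leg length $p-p_i+q$ of the $i$-th hook is coupled to the remaining factors through $\sum_j p_j=p$, the factors cannot be treated fully independently, and controlling this coupling is where the real work lies. The case $k=0$, where $\sigma_q\tau^0Y=\sigma_qY$ is the variety of matrices of rank $\leq q$ and the answer matches the linear strand of Lascoux's determinantal resolution, serves as both a sanity check and the base model for the general computation.
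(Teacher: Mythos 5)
Your reductions are sound: part \ref{ThmItem:SegreDecompoTrivial} is the $p=0$ case, part \ref{ThmItem:SegreDecompoUpper} follows by summing the inequalities in Condition \eqref{Cond:Nonvanishing}, and the inclusion $\supseteq$ amounts to exhibiting the classes --- though invoking \Cref{Thm:Main2} for this is circular in the paper's architecture, since the maps $b_{p_\ast,q+1}(T)$ there are \emph{defined} by pulling back the Segre classes whose existence is part of the present theorem; the paper instead produces them directly from the products $\boxtimes$ and \Cref{Thm:GreenLazarsfeld}\ref{Thm:GreenLazarsfeldItem2}. The genuine gap is exactly where you locate it: the reverse inclusion is only planned, not proved. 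A Kempf--Lascoux--Weyman computation for $\sigma_q\tau^kY$ with $2k+2$ Segre factors would require identifying a desingularization by a homogeneous bundle, proving the acyclicity needed for the geometric technique to compute the minimal resolution, and then controlling the multi-factor Bott bookkeeping; none of this is routine, and for $k\geq 1$ no such resolution is available off the shelf. As written, the proposal establishes only that $\bigoplus_{p_\ast}B_{p_\ast,q+1}(V_1,\ldots,V_{2k+2})$ is a subrepresentation, not the equality.

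The paper closes this gap by a much lighter device: induction on the number of factors $\ell$, regrouping $U:=V_1\otimes\cdots\otimes V_{\ell-1}$ and $W:=V_\ell$. By \Cref{Cor:SegreBottomEquation} and the vanishing of $I(\sigma_q\tau^kY)_q$, the group $K_{p,q+1}(I(\sigma_q\tau^kY))$ is the cycle space $Z_{p,q+1}(V_1\otimes\cdots\otimes V_\ell)\cap\bigl(\A^p(V_1\otimes\cdots\otimes V_\ell)\otimes\A^{q+1}V_1\otimes\cdots\otimes\A^{q+1}V_\ell\bigr)$ (with symmetric and exterior roles alternating with the parity of $\ell$), which embeds into the two-factor space $K_{p,q+1}(U,W)$; the latter is known from the linear strand of the Lascoux resolution (\Cref{Thm:Lascoux} with $m=1$) and is re-expressed in \Cref{Thm:GreenLazarsfeld} as $\bigoplus_{a+b=p}Z^{a,b+q+1}(U)\otimes Z^{b,a+q+1}(W)$ via the products $\boxtimes$. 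The decisive step is \Cref{Thm:GreenLazarsfeld}\ref{Thm:GreenLazarsfeldItem1} combined with \Cref{Lem:WedgeProlong}: any irreducible constituent whose $\A^{q+1}$-part is confined to $\A^{q+1}V_1\otimes\cdots\otimes\A^{q+1}V_\ell$ must have its $U$-factor landing in the prolongation $(\A^{q+1}V_1\otimes\cdots\otimes\A^{q+1}V_{\ell-1})^{(b)}=\A^{b+q+1}V_1\otimes\cdots\otimes\A^{b+q+1}V_{\ell-1}$, i.e.\ in $K_{a,b+q+1}(V_1,\ldots,V_{\ell-1})$, which is exactly the inductive datum with $q$ shifted to $b+q$. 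This is the mechanism that resolves the coupling $p=\sum_ip_i$ across factors that you correctly identified as the crux, and it avoids any desingularization. To salvage your route you would need either to carry out the geometric technique in full or to replace it by this two-factor regrouping argument.
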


The foregoing results collectively constitute a new \emph{matryoshka structure}. Here, a \emph{matryoshka structure} means a sequence of parallel facts indexed by integers $q\geq 1$ such that the $q$-th one holds for $q$-secant varieties. It is very interesting that higher secant varieties exhibit various matryoshka structures. We refer to \cite{MR4160876}, \cite{MR4394426}, \cite{choe2022determinantal}, \cite{MR4441153}, and \cite{choe2023syzygies} for theorems and \cite{MR4888702} for a conjecture.

One of the motivations for this study is Aprodu-Farkas-Papadima-Raicu-Weyman's proof of \emph{Green's conjecture for general canonical curves}. For any smooth canonical curve $C\subset\mathbb PH^0(\omega_C)$ of degree $g\geq 3$, Green suggested a conjectural criterion for the vanishing/nonvanishing of $K_{i,j}(I(C))$ in terms of the Clifford index of $C$. 
\begin{Conj}[Green's conjecture for canonical curves, {\cite[Conjecture 5.1]{MR739785}}]
Let $C\subset\mathbb PH^0(C,\omega_C)$ be a smooth canonical curve of genus $g\geq 3$ and Clifford index $c$. Then $K_{i,j}(I(C))\neq 0$ if and only if
$$
i\in
\begin{cases}
\{0,\ldots,g-c-3\}    & \textup{when }j=2 \\
\{c-1,\ldots,g-4\}    & \textup{when }j=3 \\
\{g-3\}    & \textup{when }j=4 \\
\emptyset    & \textup{otherwise}.
\end{cases}
$$
\end{Conj}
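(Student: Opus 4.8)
The plan is to follow the degeneration-to-a-tangent-developable strategy of Aprodu--Farkas--Papadima--Raicu--Weyman, exploiting the fact that the objects it requires are precisely the tangent varieties $\tau\nu_d(\mathbb P^1)=\sigma_1\tau^1\nu_d(\mathbb P^1)$ governed by \Cref{Thm:Main1} and \Cref{Thm:Main2}. First I would dispose of the bookkeeping by invoking the Gorenstein self-duality of the canonical coordinate ring $S_C$: since $\omega_C=\mathcal O_C(1)$ makes $S_C$ arithmetically Gorenstein, one has $K_{p,j}(I(C))\cong K_{g-4-p,\,5-j}(I(C))^\ast$, which exchanges the rows $j=2$ and $j=3$ (carrying the range $0\le i\le g-c-3$ to the range $c-1\le i\le g-4$) and forces the one-dimensional Gorenstein socle $K_{g-3,4}(I(C))\neq0$, dual to the unit term. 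This reduces the whole statement to an understanding of the single row $j=2$: the precise nonvanishing throughout $0\le i\le g-c-3$ together with the sharp vanishing $K_{i,2}(I(C))=0$ for $i\ge g-c-2$.

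The nonvanishing half is the classical constructive direction. The case $i=0$ is Petri's theorem on quadrics, and the nonvanishing throughout the range, and in particular the extremal class $K_{g-c-3,2}(I(C))\neq0$, is furnished by the Green--Lazarsfeld nonvanishing theorem applied to a pencil computing the Clifford index $c$. Here I would use this paper's representation-theoretic reinterpretation of the Green--Lazarsfeld classes together with the natural maps $b_{p_\ast,q+1}(T)$ of \Cref{Thm:Main2}: pulling back along such a pencil should realize the extremal class as the image of a provably nonzero $b_{p_\ast,q+1}(T)$, the exterior-algebra analogue of the Schur-module generators $B_{p_\ast,q+1}(V_1,\ldots,V_{2k+2})$.

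The hard direction is the sharp vanishing, and it is the main obstacle. Following Bayer--Eisenbud and Ein--Schreyer, I would degenerate the general canonical curve of the relevant gonality flatly to the tangent developable $\tau\nu_d(\mathbb P^1)$ of a rational normal curve (the reduced structure underlying the generic canonical ribbon), which is exactly the case $(q,k)=(1,1)$ of the varieties treated here. By upper-semicontinuity of Betti numbers, $K_{i,2}(I(\tau\nu_d(\mathbb P^1)))=0$ implies the same vanishing for the general canonical curve, so it suffices to compute the syzygies of this one tangent developable. I would attack this exactly as \Cref{Thm:SegreDecompo} treats the Segre case: show that the images of the maps $b_{p_\ast,q+1}(T)$ for the multiplication tensor $T:\bigotimes_{i=1}^4 H^0(\mathcal O_{\mathbb P^1}(a_i))\to H^0(\mathcal O_{\mathbb P^1}(d))$, with $a_1+\cdots+a_4=d$, account for all of $K_{p,2}(I(\tau\nu_d(\mathbb P^1)))$, and that these Schur-module contributions vanish beyond the critical index.

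The genuine difficulty is proving this completeness-and-vanishing for the tangent developable rather than merely producing classes: semicontinuity supplies only one inequality, and the Veronese setting lacks the full multi-$\GL$ torus symmetry that made the decomposition in \Cref{Thm:SegreDecompo} clean. I expect to need the Koszul-module and resonance-variety mechanism of Aprodu--Farkas--Papadima--Raicu--Weyman to bound the top nonvanishing graded piece, translating the syzygy vanishing into a codimension estimate for the resonance locus of the relevant $K\subseteq\wedge^2V$, and then matching that estimate against the explicit $B_{p_\ast,q+1}$-decomposition. Finally, the passage from the generic gonality to an arbitrary Clifford index $c$ --- the full strength of the conjecture --- is the deepest remaining point, since both the degeneration target and the resonance computation must be adapted to each stratum of the gonality stratification.
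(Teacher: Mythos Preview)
The statement you are attempting to prove is recorded in the paper as a \emph{Conjecture}, not a theorem; the paper offers no proof of it and does not claim one. The sentence immediately following the statement says only that its validity \emph{for general curves} has been established by Voisin and by Aprodu--Farkas--Papadima--Raicu--Weyman, and the conjecture in full strength (for every smooth canonical curve of given Clifford index) remains open. So there is no ``paper's own proof'' to compare your proposal against.

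Your proposal is in any case a strategy outline rather than a proof, and you correctly identify its own gaps. The degeneration-to-the-tangent-developable argument you sketch is indeed the AFPRW route, but it establishes the vanishing half only for a \emph{general} curve of each genus (hence of generic Clifford index $\lfloor (g-1)/2\rfloor$); your final paragraph concedes that extending this to an arbitrary Clifford index $c$ is ``the deepest remaining point,'' and that is precisely the part of Green's conjecture that is still unproved. Moreover, the hope that the maps $b_{p_\ast,q+1}(T)$ of \Cref{Thm:Main2} might \emph{span} $K_{p,2}(I(\tau\nu_d(\mathbb P^1)))$ is not supported by the paper: \Cref{Thm:SegreDecompo} gives such a spanning result only for Segre varieties with the full $\prod_i\GL(V_i)$ symmetry, and the paper's own discussion of $\ell(\tau C)$ in Section~5.1 shows that for the rational normal curve one must invoke the Koszul-module machinery of \cite{MR4022070} rather than the $B_{p_\ast,q+1}$ classes alone. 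In short, the proposal is a reasonable summary of what is known and what one would like to do, but it does not close the conjecture, and neither does the paper.
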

Its validity for general curves was proved by Voisin (\cite{MR1941089} and \cite{MR2157134}) and by Aprodu-Farkas-Papadima-Raicu-Weyman (\cite{MR4022070}). The method of \cite{MR4022070} employs a general hyperplane section
$$
\tau(\nu_g(\mathbb P^1))\cap H\subset H=\mathbb P^{g-1}
$$
of the tangent variety to the rational normal curve $\nu_g(\mathbb P^1)\subset\mathbb P^g$ of degree $g$. By relying on a semicontinuity argument applied to the moduli space of pseudostable curves of arithmetic genus $g$, a detailed analysis of the linear syzygies of $\tau(\nu_d(\mathbb P^1))\subset\mathbb P^g$ completes the proof.

However, the equations of tangent varieties have been surprisingly unexplored in the literature except certain special cases. 
Settling a conjecture \cite[Conjecture 7.6]{MR2363430} of Landsberg and Weyman, Oeding and Raicu \cite[Theorems B, 5.4, and 5.6]{MR3240996} completely described the equations of tangent varieties to Segre-Veronese varieties, and in particular, they showed that for every $4$-factor Segre variety $Y:=\mathbb PV_1\times\cdots\times\mathbb PV_4\subseteq\mathbb P(V_1\otimes\cdots\otimes V_4)$, the quadrics passing through $\tau Y$ form the space 
$$
\A^2V_1\otimes\cdots\otimes\A^2V_4\subset S^2(V_1\otimes\cdots\otimes V_4).
$$
For other projective varieties, only case-by-case results (such as \cite{MR2363430}) are known for tangent varieties, in contrast to higher secant varieties which admit various general frameworks for finding equations, including the determinantal approach.

Another motivation arises from \emph{Green-Lazarsfeld classes}. For a complete embedding $X\subseteq\mathbb PH^0(L)$ let $M:H^0(L_1)\otimes H^0(L_2)\to H^0(L)$ be a multiplication matrix of complete linear systems on $X$ with $h^0(L_1)\geq 2$ and $h^0(L_2)\geq 2$. Then the $2$-minors of $M$ vanish on $X\subseteq\mathbb PH^0(L)$, which means that the induced map
$$
\A^2H^0(L_1)\otimes\A^2H^0(L_2)\to S^2H^0(L)
$$
factors through $I(X)_2$. The determinantal quadrics allow linear syzygies of $X\subset\mathbb PH^0(L)$ to go far away, yielding the nonvanishing
$$
K_{r_1+r_2-2,2}(I(X))\neq 0,\quad r_i:=\dim|L_i|\geq 1,
$$
from an explicit nonzero element, namely a \emph{Green-Lazarsfeld class} \cite[Appendix]{MR739785}. (See \cite{MR1012666}, \cite{MR1238043}, and \cite{MR2157134} for alternative accesses.) Such a construction also works well for higher secant varieties:
$$
K_{r_1+r_2-2q,q+1}(I(\sigma_qX))\neq0
$$
whenever $r_1\geq q$ and $r_2\geq q$. The nonvanishing for $I(X)$ is in fact the origin of Green's conjecture and a former conjecture called the \emph{gonality conjecture} which play a central role in the theory of syzygies, and the one for $I(\sigma_qX)$ generalizes the gonality conjecture to higher secant varieties in terms of the gonality sequence \cite[Theorem 1.1]{choe2023syzygies}.

\section{Preliminaries}

In this section we collect useful notions and facts.

\begin{Notation}
The following are basic.
\begin{enumerate}
    \item $\mathbb N=\{0,1,\ldots\}\subset\mathbb Z$.
    \item For a vector $\alpha=(\alpha_0,\ldots,\alpha_{d-1})\in\mathbb N^d$ we put $|\alpha|=\alpha_0+\cdots+\alpha_{d-1}$ and write $\alpha\vdash|\alpha|$.
    \item Also, define $\alpha!=\alpha_0!\cdots\alpha_{d-1}!$ for such a vector $\alpha$.
    \item For a finite subset $\alpha=\{\alpha_0<\cdots<\alpha_{a-1}\}\subset\mathbb N$ we denote by $|\alpha|$ the number $a$ of elements.
    \item $S^\ast U=\bigoplus_{j\in\mathbb Z}S^jU$ is the symmetric algebra, and $\A^\ast U=\bigoplus_{j\in\mathbb Z}\A^jU$ is the exterior algebra.
    \item $\GL(U)$ is the general linear group of $U$.
    \item $T^{q+1}U$ is the $(q+1)$-th tensor power of $U$.
    \item For a graded module $M$ we write $M_j$ for its component in degree $j\in\mathbb Z$.
    \item $\Sm X$ is the smooth locus of $X$, and $\widehat{X}\subseteq V^\ast$ is the affine cone of $X\subseteq\mathbb PV$.
    \item $\mathfrak{S}_{q+1}$ is the symmetric group of degree $q+1$.
\end{enumerate}
\end{Notation}

\subsection{Multilinear algebra}

Let $U$ be a vector space of finite dimension. Fix a basis $u=(u_0,u_1,\ldots)$ of $U$, and take its dual basis 
$$
\partial_u=(\partial_u^0,\partial_u^1,\ldots)\quad\text{or}\quad\partial^u=(\partial^u_0,\partial^u_1,\ldots)
$$ 
for $U^\ast$, where the former (resp.\ the latter) applies when dealing with $S^\ast U$ (resp.\ $\A^\ast U$). For an element $\alpha=(\alpha_0,\alpha_1,\ldots)\vdash a$ in $\mathbb N^{\dim U}$ with $\alpha_0+\alpha_1+\cdots=a$,
\begin{center}
$S^aU$ has a basis $u^\alpha:=u_0^{\alpha_0}u_1^{\alpha_1}\cdots$,\quad and $S^aU^\ast$ has a basis $\partial_u^\alpha:=(\partial_u^0)^{\alpha_0}(\partial_u^1)^{\alpha_1}\cdots$.    
\end{center} 
Similarly, for a subset $\alpha=\{\alpha_0<\alpha_1<\cdots\}$ of cardinality $a$ in $\{0,\ldots,\dim U-1\}$,
\begin{center}
$\A^aU$ has a basis $u_\alpha:=u_{\alpha_0}\wedge u_{\alpha_1}\wedge\cdots$,\quad and $\A^aU^\ast$ has a basis $\partial^u_\alpha:=\partial^u_{\alpha_0}\wedge \partial^u_{\alpha_1}\wedge\cdots$.    
\end{center}

Just as $S^aU^\ast$ forms the space of partial differentials of order $a$ on $S^\ast U$, so does $\A^aU^\ast$ on $\A^\ast U$ in the following way. For any elements $f\in\A^bU$ and $g\in\A^\ast U$ we have
$$
\partial^u_i(f\wedge g)=(\partial^u_if)\wedge g+(-1)^bf\wedge(\partial^u_ig),
$$
and the usual product on $\A^\ast U^\ast$ corresponds to the composition in reverse order, that is,
$$
\partial^u_\alpha\wedge\partial^u_\beta=\partial^u_\beta\circ\partial^u_\alpha
$$ 
for all $\alpha,\beta\subseteq\{0,\ldots,\dim U-1\}$. 

Consider the \emph{coproduct maps}
$$
\Delta:S^{a+b}U\to S^aU\otimes S^bU\quad\text{and}\quad\Delta:\A^{a+b}U\to\A^aU\otimes\A^bU
$$
which are computed by the formula
$$
\binom{a+b}{a}\Delta(f)=
\begin{cases}
\displaystyle{\sum_{\alpha\vdash a}\frac{1}{\alpha!}u^\alpha\otimes\partial_u^\alpha f} & \textup{if }f\in S^{a+b}U \\
\displaystyle{\sum_{|\alpha|=a}u_\alpha\otimes\partial^u_\alpha f} & \textup{if }f\in\A^{a+b}U,
\end{cases}
$$
where either $\alpha\in\mathbb N^{\dim U}$, or $\alpha\subseteq\{0,\ldots,\dim U-1\}$. They are sections of the product maps $S^aU\otimes S^bU\to S^{a+b}U$ and $\A^aU\otimes\A^bU\to\A^{a+b}U$, respectively, which amounts to Euler's homogeneous function theorem.

\begin{Prop}\label{Prop:Euler}
We have
$$
\binom{a+b}{a}f=
\begin{cases}
\displaystyle{\sum_{\alpha\vdash a}\frac{1}{\alpha!}u^\alpha\cdot\partial_u^\alpha f} & \textup{if }f\in S^{a+b}U \\
\displaystyle{\sum_{|\alpha|=a}u_\alpha\wedge\partial^u_\alpha f} & \textup{if }f\in\A^{a+b}U.
\end{cases}
$$
\end{Prop}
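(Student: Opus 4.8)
The plan is to verify both identities by evaluating each side on the standard monomial bases, since every map in sight is linear. In the symmetric case it suffices to fix a monomial $f=u^\beta$ with $\beta\vdash a+b$, and in the exterior case a basis wedge $f=u_\beta$ with $|\beta|=a+b$; the general statement then follows by linearity in $f$.

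For the symmetric case I would first record the action of the differential operator $\partial_u^\alpha$ on a monomial. Since $\partial_u^iu_j=\delta_{ij}$ and $\partial_u^\alpha=\prod_i(\partial_u^i)^{\alpha_i}$ acts coordinatewise by the power rule, one gets $\partial_u^\alpha u^\beta=\prod_i\frac{\beta_i!}{(\beta_i-\alpha_i)!}\,u^{\beta-\alpha}$ when $\alpha\le\beta$ entrywise and $0$ otherwise. Substituting into the right-hand side and using $u^\alpha u^{\beta-\alpha}=u^\beta$ turns each coefficient $\frac{1}{\alpha!}\prod_i\frac{\beta_i!}{(\beta_i-\alpha_i)!}$ into $\prod_i\binom{\beta_i}{\alpha_i}$, so the total coefficient of $u^\beta$ is $\sum_{\alpha\le\beta,\,|\alpha|=a}\prod_i\binom{\beta_i}{\alpha_i}$. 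By the multivariate Vandermonde identity (equivalently, the coefficient of $x^a$ in $\prod_i(1+x)^{\beta_i}=(1+x)^{|\beta|}$), this sum equals $\binom{|\beta|}{a}=\binom{a+b}{a}$, which is exactly the assertion.

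For the exterior case I would use that $\partial^u_i$ is the antiderivation with $\partial^u_iu_j=\delta_{ij}$, so that $\partial^u_\alpha u_\beta=\pm\,u_{\beta\setminus\alpha}$ when $\alpha\subseteq\beta$ and $0$ otherwise, the sign being the shuffle sign that extracts $u_\alpha$ out of $u_\beta$. Wedging back with $u_\alpha$ reinserts those factors into their original slots, and the reinsertion sign is precisely the inverse of the extraction sign; hence $u_\alpha\wedge\partial^u_\alpha u_\beta=u_\beta$ with coefficient $+1$ for every $a$-subset $\alpha\subseteq\beta$. Since there are $\binom{a+b}{a}$ such subsets, $\sum_{|\alpha|=a}u_\alpha\wedge\partial^u_\alpha u_\beta=\binom{a+b}{a}u_\beta$, as required.

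The only genuinely delicate point is the sign bookkeeping in the exterior case: one must confirm, uniformly in $\alpha\subseteq\beta$, that the sign produced by the contraction $\partial^u_\alpha$ cancels the sign produced by wedging $u_\alpha$ back in. This is cleanest to handle by reducing to singletons via the antiderivation rule $\partial^u_i(f\wedge g)=(\partial^u_if)\wedge g+(-1)^{\deg f}f\wedge(\partial^u_ig)$ and inducting on $|\alpha|$, or equivalently by pairing the shuffle sign of $(\alpha,\beta\setminus\alpha)$ against itself. The symmetric case carries no sign issues and reduces purely to Vandermonde, so it is entirely routine.
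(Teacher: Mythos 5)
Your proof is correct; the paper states this proposition without proof, remarking only that it amounts to Euler's homogeneous function theorem, so there is nothing to compare against. Your basis-by-basis verification is complete: the symmetric case reduces to Vandermonde exactly as you say, and in the exterior case, writing $u_\beta=\epsilon(\alpha,\beta\setminus\alpha)\,u_\alpha\wedge u_{\beta\setminus\alpha}$ one gets $\partial^u_\alpha u_\beta=\epsilon(\alpha,\beta\setminus\alpha)\,u_{\beta\setminus\alpha}$ under the paper's convention $\partial^u_\alpha\wedge\partial^u_\beta=\partial^u_\beta\circ\partial^u_\alpha$, so the shuffle sign indeed squares to $+1$ and each of the $\binom{a+b}{a}$ subsets contributes $u_\beta$.
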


\emph{Koszul differentials} are linear transformations
$$
\delta_{a,b}:\A^aU\otimes S^bU\to\A^{a-1}U\otimes S^{b+1}U\quad\text{and}\quad \delta^{a,b}:S^aU\otimes\A^bU\to S^{a-1}U\otimes\A^{b+1}U
$$
defined as the compositions of
$$
\begin{array}{cl}
\begin{tikzcd}
\A^aU\otimes S^bU \ar[r,"\Delta\otimes\id"] & \A^{a-1}U\otimes U\otimes S^bU \ar[r,"\id\otimes\cdot"] & \A^{a-1}U\otimes S^{b+1}U    
\end{tikzcd} 
& \text{and} \\
\begin{tikzcd}
S^aU\otimes\A^bU \ar[r,"\Delta\otimes\id"] & S^{a-1}U\otimes U\otimes\A^bU \ar[r,"\id\otimes\wedge"] & S^{a-1}U\otimes\A^{b+1}U,
\end{tikzcd}
\end{array}
$$
respectively. They constitute chain complexes
$$
\begin{array}{cl}
     \begin{tikzcd}
        \cdots \ar[r] & \A^{a+1}U\otimes S^{b-1}U \ar[r,"\delta_{a+1,b-1}"] & \A^aU\otimes S^bU \ar[r,"\delta_{a,b}"] & \A^{a-1}U\otimes S^{b+1}U \ar[r] & \cdots
    \end{tikzcd} & \text{and} \\
    \begin{tikzcd}
        \cdots \ar[r] & S^{a+1}U\otimes\A^{b-1}U \ar[r,"\delta^{a+1,b-1}"] & S^aU\otimes\A^bU \ar[r,"\delta^{a,b}"] & S^{a-1}U\otimes\A^{b+1}U \ar[r] & \cdots
    \end{tikzcd} 
\end{array}
$$
that are exact everywhere except at $\A^0U\otimes S^0U$ and $S^0U\otimes\A^0U$, respectively, at which the (co)homology groups have dimension one. Write
$$
Z_{a,b}(U)=\ker\delta_{a,b}\quad\text{and}\quad Z^{a,b}(U)=\ker\delta^{a,b}
$$
for the groups of cycles.

\emph{Schur modules} $S^\lambda U$ are finite-dimensional representations of the general linear group $\GL(U)$, depending on partitions $\lambda=(\lambda_1\geq\lambda_2\geq\cdots)$, where $S^\lambda U=S^tU$ if $\lambda=(t)$, and $S^\lambda U=\A^tU$ if $\lambda=(1,\ldots,1)\vdash t$. 
A Schur module $S^\lambda U$ is nonzero if and only if $\lambda_{\dim U}\neq 0$, and in this case it is irreducible over $\GL(U)$, which means that it does not carry a proper nontrivial subrepresentation. 

\begin{Prop}\label{Prop:TrivialitySchurtoSchur}
If $\phi:S^\lambda U\to S^\mu U$ is a nonzero $\GL(U)$-module homomorphism between Schur modules, then $\lambda$ and $\mu$ are the same, and $\phi$ is just a scalar multiplication. Moreover, the same thing holds for $S^{\lambda^1}V_1\otimes\cdots\otimes S^{\lambda^\ell}V_\ell$ over $\GL(V_1)\times\cdots\times\GL(V_\ell)$.
\end{Prop}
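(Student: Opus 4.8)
The plan is to recognize this as a direct application of Schur's lemma, valid because we work over the algebraically closed field $\mathbb C$: a nonzero homomorphism between irreducible representations of a group is automatically an isomorphism, and an endomorphism of a finite-dimensional irreducible representation is a scalar multiple of the identity. Together with the irreducibility of the modules $S^\lambda U$ recorded just above the statement, the whole proposition reduces to the classification fact that distinct partitions yield non-isomorphic Schur modules.

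First I would treat the single-factor case. Since $S^\lambda U$ and $S^\mu U$ are irreducible over $\GL(U)$ and $\phi$ is nonzero, Schur's lemma forces $\phi$ to be an isomorphism, so $S^\lambda U\cong S^\mu U$. To deduce $\lambda=\mu$ I would pass to highest weights: fixing the maximal torus of diagonal matrices and the Borel of upper triangular matrices relative to the chosen basis $u$ of $U$, the module $S^\lambda U$ has a one-dimensional space of highest weight vectors, of weight $\lambda$. Any $\GL(U)$-isomorphism carries highest weight vectors to highest weight vectors and preserves weights, whence $\lambda=\mu$. Once $\lambda=\mu$, the map $\phi$ is an endomorphism of the irreducible module $S^\lambda U$, hence a scalar by the second half of Schur's lemma.

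For the tensor product statement, the same two ingredients suffice once adapted to the product group $G:=\GL(V_1)\times\cdots\times\GL(V_\ell)$. The external tensor product $S^{\lambda^1}V_1\otimes\cdots\otimes S^{\lambda^\ell}V_\ell$ of irreducibles is again irreducible over $G$ — here the algebraic closedness of $\mathbb C$ enters once more, through the standard fact that over an algebraically closed field the irreducible representations of a direct product are precisely the external tensor products of irreducibles of the factors. Distinct tuples $(\lambda^1,\ldots,\lambda^\ell)$ give non-isomorphic modules, since their highest weights with respect to the product of the standard tori differ. Hence a nonzero $G$-homomorphism is an isomorphism by Schur's lemma, forces the two tuples of partitions to coincide, and is then a scalar as an endomorphism of an irreducible module. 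The argument is essentially formal: the only genuine input beyond Schur's lemma is the distinctness of Schur modules under distinct partitions, together with the irreducibility of external tensor products, which I would import from highest weight theory rather than reprove, so I do not anticipate any real obstacle.
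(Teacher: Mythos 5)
Your argument is correct: the paper states this proposition without proof, treating it as the standard consequence of Schur's lemma over $\mathbb C$ together with the irreducibility of $S^\lambda U$ recorded in the preceding paragraph and the fact that distinct partitions (respectively, distinct tuples of partitions for the product group) give distinct highest weights. Your write-up supplies exactly the ingredients the paper implicitly relies on, so there is nothing to correct.
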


A partition $\lambda \vdash t$ is typically identified with a \emph{Young diagram} consisting of $t$ boxes. These boxes are arranged in a matrix format with one box placed at the $(i,j)$-entry for each $i\geq1$ and $1\leq j\leq\lambda_i$. For example, if $\lambda=(2,1,1)$, then
\begin{equation}\label{Eqn:Y211}
\ydiagram{2,1,1}    
\end{equation}
is the corresponding Young diagram. Transposing the Young diagram of a given partition $\lambda$, we obtain the \emph{conjugate partition} $\lambda'$ of $\lambda$. For the example above we have $\lambda'=(3,1)$ with the following Young diagram.
\begin{equation}\label{Eqn:Y31}
\ydiagram{3,1}    
\end{equation}
For our purpose \emph{hook diagrams} are of special interest. They are Young diagrams that have at most one box in all but the first row, that is, $\lambda_2\leq1$, such as \eqref{Eqn:Y211} and \eqref{Eqn:Y31}. 
Hook diagrams are related to Koszul differentials via
$$
Z_{a,b}(U)=S^{(b,1,\ldots,1)}U\quad\text{and}\quad Z^{a,b}(U)=S^{(a+1,1,\ldots,1)}U
$$
for the partitions $(b,1,\ldots,1)$ and $(a+1,1,\ldots,1)$ of $a+b$.

A fundamental topic in representation theory is to describe the induced $\GL(U)\times\GL(W)$-module structure of a given representation of $\GL(U\otimes W)$. Basic formulas along this line are as follows.

\begin{Thm}[Cauchy-Littlewood formula]\label{Thm:CL}
Let $\lambda$ stand for a partition of an integer $t\geq 0$. Then we have decompositions
$$
\A^t(U\otimes W)=\bigoplus_{\lambda\vdash t}S^\lambda U\otimes S^{\lambda'}W\quad\text{and}\quad S^t(U\otimes W)=\bigoplus_{\lambda\vdash t}S^\lambda U\otimes S^\lambda W
$$
over $\GL(U)\times\GL(W)$. Also, the Schur module $S^\lambda(U\otimes W)$ has summands 
$$
\A^tU\otimes S^{\lambda'}W,\quad S^{\lambda'}U\otimes\A^tW,\quad S^tU\otimes S^\lambda W,\quad\text{and}\quad S^\lambda U\otimes S^tW,
$$
and the other summands can not have the factors $\A^tU$, $\A^tW$, $S^tU$, and $S^tW$, respectively.
\end{Thm}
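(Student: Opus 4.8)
The plan is to derive both decomposition formulas and the statement about extremal summands uniformly from \emph{Schur–Weyl duality} applied to the $t$-th tensor power; this is cleaner than comparing Cauchy identities of symmetric functions and, crucially, it also handles the finer claim about $S^\lambda(U\otimes W)$. Let $\SG_t$ permute the $t$ tensor factors and, for each $\lambda\vdash t$, write $[\lambda]$ for the associated irreducible $\SG_t$-representation. Schur–Weyl duality gives, as $\SG_t\times\GL(U\otimes W)$-modules,
$$
(U\otimes W)^{\otimes t}=\bigoplus_{\lambda\vdash t}[\lambda]\otimes S^\lambda(U\otimes W),
$$
and separately $U^{\otimes t}=\bigoplus_\mu[\mu]\otimes S^\mu U$ and $W^{\otimes t}=\bigoplus_\nu[\nu]\otimes S^\nu W$.

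First I would compute $(U\otimes W)^{\otimes t}\cong U^{\otimes t}\otimes W^{\otimes t}$ in two ways. Tensoring the last two expansions, and noting that $\SG_t$ acts \emph{diagonally} on $[\mu]\otimes[\nu]$, produces
$$
(U\otimes W)^{\otimes t}=\bigoplus_{\mu,\nu}\bigl([\mu]\otimes[\nu]\bigr)\otimes\bigl(S^\mu U\otimes S^\nu W\bigr)
$$
as $\SG_t\times\GL(U)\times\GL(W)$-modules. Matching this against the first expansion and reading off the $[\lambda]$-isotypic component shows that the multiplicity of $S^\mu U\otimes S^\nu W$ in $S^\lambda(U\otimes W)$ equals the multiplicity $g^\lambda_{\mu\nu}$ of $[\lambda]$ in $[\mu]\otimes[\nu]$.

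Next I would specialize to the two extremal values of $\lambda$. The symmetric power $S^t(U\otimes W)$ is the $\SG_t$-invariant part, i.e.\ the summand $\lambda=(t)$ with $[(t)]$ trivial; since the trivial multiplicity in $[\mu]\otimes[\nu]$ is $\langle[\mu],[\nu]\rangle=\delta_{\mu\nu}$ by self-duality of the $[\nu]$ and orthogonality of characters, this gives $S^t(U\otimes W)=\bigoplus_\mu S^\mu U\otimes S^\mu W$. The exterior power $\A^t(U\otimes W)$ is the sign-isotypic part, $\lambda=(1^t)$ with $[(1^t)]=\sgn$; using the twist $\sgn\otimes[\nu]\cong[\nu']$ the relevant multiplicity becomes $\langle[\mu],[\nu']\rangle=\delta_{\mu,\nu'}$, whence $\A^t(U\otimes W)=\bigoplus_\mu S^\mu U\otimes S^{\mu'}W$.

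Finally, for the claim about $S^\lambda(U\otimes W)$ I would run the same extremal analysis but with a factor on the $U$- or $W$-side fixed. Forcing $\mu=(1^t)$ makes $[\mu]=\sgn$, so $g^\lambda_{(1^t)\nu}=\langle[\nu'],[\lambda]\rangle=\delta_{\nu,\lambda'}$; hence $\A^tU$ can occur in $S^\lambda(U\otimes W)$ only through the summand $\A^tU\otimes S^{\lambda'}W$, and then with multiplicity one. Forcing $\mu=(t)$ makes $[\mu]$ trivial, so $g^\lambda_{(t)\nu}=\delta_{\nu\lambda}$, giving $S^tU$ only through $S^tU\otimes S^\lambda W$; the remaining two assertions are the mirror statements obtained by exchanging $U$ and $W$. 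The hard part will be purely organizational: one must confirm that the $\SG_t$-action on $[\mu]\otimes[\nu]$ is genuinely diagonal and correctly invoke both the self-duality of Specht modules and the sign twist $\sgn\otimes[\nu]\cong[\nu']$. Once these are secured, every extremal Kronecker coefficient collapses to a Kronecker delta and all four summand statements follow at once.
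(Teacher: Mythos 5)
Your argument is correct and complete. The paper itself offers no proof of \Cref{Thm:CL} --- it is invoked as a classical fact --- so there is nothing internal to compare against; what you have written is a legitimate self-contained derivation. Your route through Schur--Weyl duality is the standard one for simultaneously obtaining both Cauchy identities and the extremal-summand claim: identifying $(U\otimes W)^{\otimes t}\cong U^{\otimes t}\otimes W^{\otimes t}$ with the diagonal $\SG_t$-action correctly reduces every multiplicity to a Kronecker coefficient $g^\lambda_{\mu\nu}=\dim\Hom_{\SG_t}([\lambda],[\mu]\otimes[\nu])$, and the three ingredients you flag as the ``hard part'' are all routine over $\mathbb C$: the diagonal action is immediate from how $\SG_t$ permutes the factors of $U\otimes W$, self-duality of the $[\nu]$ follows from their characters being real (indeed integer-valued), and $\sgn\otimes[\nu]\cong[\nu']$ is standard. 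With those in hand each extremal coefficient collapses to a Kronecker delta exactly as you say, giving the two decompositions from $\lambda=(t)$ and $\lambda=(1^t)$ and the four summand statements from fixing $\mu$ or $\nu$ to be $(t)$ or $(1^t)$. The only cosmetic caveat is that the asserted summands $\A^tU\otimes S^{\lambda'}W$, etc., may vanish for dimension reasons (e.g.\ if $\dim U<t$); your multiplicity-one computation is the correct content of the statement in any case.
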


Consider the identification
$$
\begin{array}{rcl}
    T^{q+1}U\otimes T^{q+1}W & \xlongrightarrow{\cong} & T^{q+1}(U\otimes W), \\
    (u_0 \otimes\cdots\otimes u_q)\otimes(w_0\otimes\cdots\otimes w_q) & \longmapsto & (q+1)!(u_0\otimes w_0)\otimes\cdots\otimes(u_q\otimes w_q).
\end{array}
$$
Then its restriction to $\A^{q+1}U\otimes\A^{q+1}W$, under the averaging embeddings $\A^{q+1}U\subset T^{q+1}U$ and $\A^{q+1}W\subset T^{q+1}W$, is nothing but the determinant map
$$
\det:\A^{q+1}U\otimes\A^{q+1}W\to S^{q+1}(U\otimes W)
$$
for which if $(s_0\wedge\cdots\wedge s_q)\otimes(t_0\wedge\cdots\wedge t_q)\in\A^{q+1}U\otimes\A^{q+1}W$ is viewed as a matrix $(x_{i,j})$ with entries $x_{i,j}:=s_i\otimes t_j\in U\otimes W$, then
$$
\det(x_{i,j})=\sum_{\sigma\in\mathfrak{S}_{q+1}}(\sgn\sigma)x_{\sigma(0),0}\cdots x_{\sigma(q),q}.
$$
Compared to it another restriction, denoted by
$$
\edet:S^{q+1}U\otimes\A^{q+1}W\to\A^{q+1}(U\otimes W),
$$
receives much less attention, but it actually appears in the literature under the name ``exterior minor" \cite{MR1688437} or ``permanent." For a given form $(s_0\cdots s_q)\otimes(t_0\wedge\cdots\wedge t_q)\in S^{q+1}U\otimes\A^{q+1}W$, the map $\edet$ sends the matrix $x_{i,j}:=s_i\otimes t_j$ to
$$
\edet(x_{i,j})=\sum_{\sigma\in\mathfrak{S}_{q+1}}x_{\sigma(0),0}\wedge\cdots\wedge x_{\sigma(q),q}.
$$

The maps $\det$ and $\edet$ enjoy the so-called Jacobi's formula. Note that for fixed bases $u_0,u_1,\ldots$ and $w_0,w_1,\ldots$ of $U$ and $W$, respectively, the tensors $u_i\otimes w_j$ form a basis of $U\otimes W$ so that its dual basis $\partial/\partial(u_i\otimes w_j)$ acts as partial differentials on both $S^\ast(U\otimes W)$ and $\A^\ast(U\otimes W)$.

\begin{Prop}[Jacobi's formula]\label{Prop:Jacobi}
Let $g\in\A^{q+1}W$ be a form. If $f\in\A^{q+1}U$, then
$$
\frac{\partial}{\partial(u_i\otimes w_j)}\det(f\otimes g)=\det(\partial^u_if\otimes\partial^w_jg),
$$
and if $f\in S^{q+1}U$, then
$$
\frac{\partial}{\partial(u_i\otimes w_j)}\edet(f\otimes g)=\edet(\partial_u^if\otimes\partial^w_jg).
$$
\end{Prop}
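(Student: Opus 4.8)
The plan is to verify both identities by reducing to basis generators and then tracing the contraction operator $\partial/\partial(u_i\otimes w_j)$ through the tensor-power construction of $\det$ and $\edet$. Since each side of each formula is linear in $f$ and in $g$, it suffices to treat generators: in the determinant case $f=u_{\alpha_0}\wedge\cdots\wedge u_{\alpha_q}$ and $g=w_{\beta_0}\wedge\cdots\wedge w_{\beta_q}$, and in the $\edet$ case $f=u_{\alpha_0}\cdots u_{\alpha_q}$ with the same $g$. At this level the assertion is a coordinate-free avatar of the classical Jacobi/cofactor expansion, so its content is entirely the reconciliation of the signs and multiplicities produced on the two sides.

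The engine I would use is the identification $\Phi\colon T^{q+1}U\otimes T^{q+1}W\xrightarrow{\cong}T^{q+1}(U\otimes W)$ from which $\det$ and $\edet$ arise by restriction to the (anti)symmetric pieces. The key step to isolate is that $\partial/\partial(u_i\otimes w_j)$, lifted to the tensor power as a sum of slotwise contractions (alternating in sign in the exterior case), pulls back along $\Phi$, up to the factorial normalization, to $\sum_{c=0}^{q}\bigl(\text{contract the }c\text{-th slot of the first factor by }\partial^u_i\bigr)\otimes\bigl(\text{contract the }c\text{-th slot of the second factor by }\partial^w_j\bigr)$. This holds because a slot of $\Phi(\cdots)$ equals $u_i\otimes w_j$ precisely when the corresponding slots of the two tensor factors are $u_i$ and $w_j$, and evaluation against $(u_i\otimes w_j)^\ast$ factors as $\langle\partial^u_i,-\rangle\langle\partial^w_j,-\rangle$. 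Pushing this identity back down through the averaging projections $T^{q+1}\to S^{q+1}$ and $T^{q+1}\to\A^{q+1}$ turns the slotwise contractions into the global differentials $\partial^u_if$ (resp.\ $\partial_u^if$) and $\partial^w_jg$, and the factorial in the definition of $\Phi$ against that in the analogous isomorphism for $T^{q}$ supplies exactly the constant needed for the right-hand side to reassemble as $\det(\partial^u_if\otimes\partial^w_jg)$, respectively $\edet(\partial_u^if\otimes\partial^w_jg)$.

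Concretely, for the determinant case one expands $\det(f\otimes g)=\sum_{\sigma\in\SG_{q+1}}(\sgn\sigma)\prod_{c}(u_{\alpha_{\sigma(c)}}\otimes w_{\beta_c})$ in $S^{q+1}(U\otimes W)$; applying $\partial/\partial(u_i\otimes w_j)$ retains only those $\sigma$ and positions $c$ with $\alpha_{\sigma(c)}=i$ and $\beta_c=j$, deleting that factor. Conditioning on the unique such $c$ identifies the surviving permutations with $\SG_q$, and the product over the remaining slots is exactly $\det(\partial^u_if\otimes\partial^w_jg)$. The $\edet$ case runs in parallel, with the unsigned summation $\sum_\sigma$ in the source and a wedge product in the target $\A^{q+1}(U\otimes W)$. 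I expect the main obstacle to be precisely the sign and multiplicity bookkeeping: the exterior contraction $\partial^u_i$ contributes a sign governed by the position of $i$ in the ordered set $\alpha$, the restriction $\SG_{q+1}\rightsquigarrow\SG_q$ contributes a relocation sign, and in the $\edet$ case the exterior differential on the target contributes a further sign; checking that these collate into the correct cofactor sign $(-1)^{i+j}$, and that repeated-basis inputs in the $\edet$ case yield the right multiplicities, is the delicate part. It is most safely handled by the slotwise tensor-power description above, which makes all the signs manifestly Leibniz-compatible and thereby bypasses the ad hoc reindexing a purely combinatorial computation would require.
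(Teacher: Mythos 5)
Your proposal is correct in substance, but it takes a more uniform (and more laborious) route than the paper. The paper dismisses the $\det$ identity as classical and proves only the $\edet$ half, and it does so by a polarization trick: since $S^{q+1}U$ is spanned by pure powers, one may take $f=u_0^{q+1}$ and $g=w_0\wedge\cdots\wedge w_q$, whereupon $\edet(f\otimes g)=(q+1)!\,(u_0\otimes w_0)\wedge\cdots\wedge(u_0\otimes w_q)$ is a single decomposable wedge; the contraction $\partial/\partial(u_0\otimes w_j)$ then removes one factor with sign $(-1)^j$, and matching constants against $\partial_u^0u_0^{q+1}=(q+1)u_0^q$ finishes the computation in four lines, with essentially no sign bookkeeping. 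You instead expand $\det(f\otimes g)$ and $\edet(f\otimes g)$ over $\mathfrak S_{q+1}$ for arbitrary basis monomials and track the slotwise contraction through the identification $T^{q+1}U\otimes T^{q+1}W\cong T^{q+1}(U\otimes W)$. That mechanism is sound and Leibniz-compatible as you say, and it has the merit of also proving the $\det$ half rather than citing it; but it forces exactly the cofactor-sign and repeated-index multiplicity reconciliation that you flag and defer, which is the only part of the argument with real content. If you want to keep your uniform framework, note that the paper's reduction is available to you too: both sides of the $\edet$ identity are linear in $f$, so checking it on pure powers $v^{q+1}$ suffices, and for such $f$ the permutation sum collapses and all the signs you are worried about disappear. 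As written, your proposal is a correct plan with one routine but unexecuted verification, not a finished proof.
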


\begin{proof}
    The formula for $\det$ is well known. Let us see the $\edet$ part. By linearity and symmetry we may assume that $f=u_0^{q+1}$ and $g=w_0\wedge\cdots\wedge w_q$. (Notice that $S^{q+1}U$ is spanned by the $(q+1)$-th powers of vectors in $U$.) Now the case $i>0$ or $j>q$ is trivial, and setting $i=0$ and $0\leq j\leq q$, we have
    \begin{align*}
        \frac{\partial}{\partial(u_0\otimes w_j)}\edet(f\otimes g) & =\frac{\partial}{\partial(u_0\otimes w_j)}\edet\left(u_0^{q+1}\otimes(w_0\wedge\cdots\wedge w_q)\right) \\
        &= (q+1)!\frac{\partial}{\partial(u_0\otimes w_j)}(u_0\otimes w_0)\wedge\cdots\wedge(u_0\otimes w_q) \\
        & =(-1)^j(q+1)!(u_0\otimes w_0)\wedge\cdots\wedge\widehat{(u_0\otimes w_j)}\wedge\cdots\wedge(u_0\otimes w_q) \\
        & =(-1)^j(q+1)\edet\left(u_0^q\otimes(w_0\wedge\cdots\wedge\widehat{w_j}\wedge\cdots\wedge w_q)\right) \\
        & =\edet\left(\partial_u^0u_0^{q+1}\otimes\partial^w_j(w_0\wedge\cdots\wedge w_q)\right) \\
        & =\edet(\partial_u^0f\otimes\partial^w_jg).
    \end{align*}
    We are done.
\end{proof}

The following property of $\det$ and $\edet$ is used later in the proof of \Cref{Prop:Product}.

\begin{Lem}\label{Lem:FactorWedge}
Consider the composition of the maps
$$
\begin{tikzcd}
    (U\otimes\A^qU)\otimes\A^{q+1}W \ar[r,"\id\otimes\Delta"] & U\otimes\A^qU\otimes W\otimes\A^qW \ar[d,equal] \\
    & (U\otimes W)\otimes(\A^qU\otimes\A^qW) \ar[r,"\id\otimes\det"] & (U\otimes W)\otimes S^q(U\otimes W) \ar[r,"\cdot"] & S^{q+1}(U\otimes W).
\end{tikzcd}
$$
Then it factors through $\A^{q+1}U\otimes\A^{q+1}W$. Also, its natural counterpart for $\edet$ holds.
\end{Lem}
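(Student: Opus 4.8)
The plan is to argue entirely by representation theory, exploiting that every arrow in the displayed composition is built from coordinate-free operations. Indeed the coproduct $\Delta$, the determinant map $\det$, and the final multiplication are all $\GL(U)\times\GL(W)$-equivariant, and so is the regrouping identification; hence the whole composition, call it $\Phi$, is a map of $\GL(U)\times\GL(W)$-representations. The natural surjection $U\otimes\A^qU\to\A^{q+1}U$, $u\otimes\omega\mapsto u\wedge\omega$, is likewise equivariant, so to prove that $\Phi$ factors through $\A^{q+1}U\otimes\A^{q+1}W$ via $\wedge\otimes\id$ it suffices to show that $\Phi$ annihilates $\ker(\wedge\otimes\id)$.

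First I would decompose the source. By Pieri's rule $U\otimes\A^qU=\A^{q+1}U\oplus S^{(2,1,\ldots,1)}U$, with $\A^{q+1}U$ of multiplicity one and the second summand equal to $\ker(\wedge)$; consequently every irreducible $\GL(U)\times\GL(W)$-constituent of $(U\otimes\A^qU)\otimes\A^{q+1}W$ carries $\A^{q+1}W=S^{(1,\ldots,1)}W$ as its $W$-factor. On the target side the Cauchy--Littlewood formula (\Cref{Thm:CL}) gives $S^{q+1}(U\otimes W)=\bigoplus_{\lambda\vdash q+1}S^\lambda U\otimes S^\lambda W$, and matching the $W$-factor $S^\lambda W\cong S^{(1,\ldots,1)}W$ forces $\lambda=(1,\ldots,1)$; thus the only constituent of the target that can receive a nonzero equivariant image is $\A^{q+1}U\otimes\A^{q+1}W$, whose embedding into $S^{q+1}(U\otimes W)$ is precisely $\det$. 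By \Cref{Prop:TrivialitySchurtoSchur}, $\Phi$ must send the summand $S^{(2,1,\ldots,1)}U\otimes\A^{q+1}W$, whose $U$-factor is not isomorphic to $\A^{q+1}U$, to zero. Since this summand is exactly $\ker(\wedge\otimes\id)$, the factorization follows, the induced map on $\A^{q+1}U\otimes\A^{q+1}W$ being a scalar multiple of $\det$.

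The $\edet$ counterpart runs identically after swapping the relevant symmetric and exterior roles. There the composition starts from $(U\otimes S^qU)\otimes\A^{q+1}W$, uses $\edet:S^qU\otimes\A^qW\to\A^q(U\otimes W)$ in the middle, and ends by wedging into $\A^{q+1}(U\otimes W)$. Now $U\otimes S^qU=S^{q+1}U\oplus S^{(q,1)}U$ with $S^{(q,1)}U$ the kernel of the multiplication $U\otimes S^qU\to S^{q+1}U$, while the Cauchy--Littlewood decomposition $\A^{q+1}(U\otimes W)=\bigoplus_{\lambda\vdash q+1}S^\lambda U\otimes S^{\lambda'}W$ singles out, via $S^{\lambda'}W\cong\A^{q+1}W$ hence $\lambda=(q+1)$, the unique summand $S^{q+1}U\otimes\A^{q+1}W$ embedded by $\edet$. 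The same application of \Cref{Prop:TrivialitySchurtoSchur} kills the $S^{(q,1)}U$-part, yielding the factorization through $S^{q+1}U\otimes\A^{q+1}W$.

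I expect the only real work to be bookkeeping: confirming the equivariance of each arrow (routine, since none of them refer to a basis) and pinning down the two Pieri decompositions together with the matching of $W$-types through Cauchy--Littlewood. No genuinely hard step arises, because \Cref{Prop:TrivialitySchurtoSchur} does all the heavy lifting once the multiplicities are in hand. If the application in \Cref{Prop:Product} also requires the induced scalar to be nonzero, I would verify this separately by evaluating $\Phi$ on a single decomposable tensor such as $u_0\otimes(u_1\wedge\cdots\wedge u_q)\otimes(w_0\wedge\cdots\wedge w_q)$ and reading off, via Jacobi's formula (\Cref{Prop:Jacobi}) or a direct Laplace expansion, that the output is a nonzero multiple of $\det\bigl((u_0\wedge\cdots\wedge u_q)\otimes(w_0\wedge\cdots\wedge w_q)\bigr)$.
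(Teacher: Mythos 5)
Your proof is correct and follows essentially the same route as the paper's: the paper's (very terse) argument likewise deduces the factorization from \Cref{Prop:TrivialitySchurtoSchur} together with the Cauchy--Littlewood formula, noting nonvanishing via a Laplace expansion. You have merely spelled out the Pieri decomposition of the source and the matching of $W$-types that the paper leaves implicit, and your closing remark on checking the nonzero scalar on a decomposable tensor matches the paper's parenthetical appeal to the Laplace expansion.
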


\begin{proof}
It is easy to see that the composition maps for $\det$ and $\edet$ are both nonzero whenever 
$$
\dim U\geq
\begin{cases}
q+1 & \textup{for }\det \\
1 & \textup{for }\edet
\end{cases}
$$
and $\dim W\geq q+1$. (Keep in mind the Laplace expansion.) The conclusion follows from \Cref{Prop:TrivialitySchurtoSchur} and the Cauchy-Littlewood formula (\Cref{Thm:CL}).    
\end{proof}

We introduce an operation that serves as a fundamental tool for establishing interesting equations, as in \Cref{Thm:Prolong} below, along with its analogue over exterior algebras.

\begin{Def}
Let $U$ be a vector space, $q\geq1$ be an integer, and $B$ be a subspace of $S^{q+1}U$ (resp.\ $\A^{q+1}U$). For an integer $d\geq0$ we define the \emph{$d$-th prolongation} $B^{(d)}$ of $B$ to be the preimage of $S^dU\otimes B$ (resp.\ $\A^dU\otimes B$) under the coproduct map $\Delta:S^{q+d+1}U\to S^dU\otimes S^{q+1}U$ (resp.\ $\Delta:\A^{q+d+1}U\to\A^dU\otimes\A^{q+1}U$):
$$
B^{(d)}=\Delta^{-1}(S^dU\otimes B)\subseteq S^{q+d+1}U\quad\textup{(resp.\ }B^{(d)}=\Delta^{-1}(\A^dU\otimes B)\subseteq\A^{q+d+1}U\textup{)}.
$$
\end{Def}

The following lemma is needed for the proof of a main theorem.

\begin{Lem}\label{Lem:WedgeProlong}
Let $q\geq 1$ and $d\geq 0$ be integers. Then up to the maps $\det$ and $\edet$ if we view $\A^{q+1}U\otimes\A^{q+1}W$ and $S^{q+1}U\otimes\A^{q+1}W$ as subspaces of $S^{q+1}(U\otimes W)$ and $\A^{q+1}(U\otimes W)$, respectively, then we have
$$
(\A^{q+1}U\otimes\A^{q+1}W)^{(d)}=\A^{q+d+1}U\otimes\A^{q+d+1}W\;\text{and}\;(S^{q+1}U\otimes\A^{q+1}W)^{(d)}=S^{q+d+1}U\otimes\A^{q+d+1}W.
$$
Furthermore, we obtain
$$
(\A^{q+1}V_1\otimes\cdots\otimes\A^{q+1}V_\ell)^{(d)}=\A^{q+d+1}V_1\otimes\cdots\otimes\A^{q+d+1}V_\ell
$$
for vector spaces $V_1,\ldots,V_\ell$.
\end{Lem}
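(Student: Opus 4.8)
I reduce the whole statement to a single base identity. First, unwinding the definition of prolongation, $f\in B^{(d)}$ holds if and only if $\partial_{e_1}\cdots\partial_{e_d}f\in B$ for all $e_1,\dots,e_d$ in the dual of $E:=U\otimes W$, where $\partial_e$ denotes contraction of $e$ against the first tensor factor produced by $\Delta$; this immediately yields $B^{(d)}=(B^{(1)})^{(d-1)}$. Writing $B_m:=\A^mU\otimes\A^mW\subseteq S^mE$ for the $\det$-image and $B_m^{\edet}:=S^mU\otimes\A^mW\subseteq\A^mE$ for the $\edet$-image, it therefore suffices to prove the two single-step identities $B_m^{(1)}=B_{m+1}$ and $(B_m^{\edet})^{(1)}=B_{m+1}^{\edet}$ for all $m\ge 2$ (the range $m=q+1,q+2,\dots$ is exactly what the iteration uses, and $m=1$ is never needed), and the general $d$ follows by iterating. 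The multi-factor case I will deduce separately from the $\det$ case at the end.

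The inclusion $\supseteq$ is the easy direction, via Jacobi's formula (\Cref{Prop:Jacobi}). Expanding $\Delta\colon S^{m+1}E\to E\otimes S^mE$ through \Cref{Prop:Euler} gives $\Delta(\det(f\otimes g))=\tfrac{1}{m+1}\sum_{i,j}(u_i\otimes w_j)\otimes\det(\partial^u_if\otimes\partial^w_jg)$, and each second factor lies in $\det(\A^mU\otimes\A^mW)=B_m$; hence $B_{m+1}\subseteq B_m^{(1)}$, and the $\edet$ version is identical using the second half of \Cref{Prop:Jacobi}. For the inclusion $\subseteq$, decompose $S^{m+1}E=\bigoplus_{\mu\vdash m+1}S^\mu U\otimes S^\mu W$ by the Cauchy–Littlewood formula (\Cref{Thm:CL}); since $B_m^{(1)}$ is a $\GL(U)\times\GL(W)$-subrepresentation, \Cref{Prop:TrivialitySchurtoSchur} shows it is a sum of these summands. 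By Pieri's rule the coproduct carries $S^\mu U\otimes S^\mu W$ only into those pieces of $E\otimes S^mE=\bigoplus_\lambda(U\otimes S^\lambda U)\otimes(W\otimes S^\lambda W)$ whose label $\lambda$ is a box-removal of $\mu$, whereas $E\otimes B_m$ is precisely the $\lambda=(1^m)$ piece. As $\Delta$ is injective (a scalar multiple of it sections multiplication, by \Cref{Prop:Euler}), any $\mu$ for which $(1^m)$ is not a box-removal projects nontrivially outside $E\otimes B_m$ and is excluded; the only partitions of $m+1$ with $(1^m)$ as a box-removal are $(1^{m+1})$ and the near-miss $\mu=(2,1^{m-1})$.

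Ruling out $(2,1^{m-1})$ is the one point where the abstract argument does not close, and I expect this to be the main obstacle; I would resolve it with an explicit highest-weight computation. In the variables $x_{ab}=u_a\otimes w_b$, the element $f:=x_{00}\cdot\det(x_{ab})_{0\le a,b\le m-1}$ is a highest-weight vector of weight $((2,1^{m-1}),(2,1^{m-1}))$ and so spans the copy of $S^{(2,1^{m-1})}U\otimes S^{(2,1^{m-1})}W$. Its derivative $\partial_{x_{00}}f=D_m+x_{00}D_m'$, where $D_m$ is the $m$-minor and $D_m'$ its $(0,0)$-cofactor, has weight $((1^m),(1^m))$; on this weight space $B_m$ is one-dimensional, spanned by $D_m$, whereas $x_{00}D_m'$ collects only the monomials of $D_m$ fixing the first row and column and hence (for $m\ge 2$) is not a scalar multiple of $D_m$. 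Thus $\partial_{x_{00}}f\notin B_m$, so $f\notin B_m^{(1)}$, and $(2,1^{m-1})$ is excluded, finishing the $\det$ case. The $\edet$ case runs in parallel: the near-miss is $\mu=(m,1)$, and for the highest-weight vector $x_{10}\wedge(x_{00}\wedge x_{01}\wedge\cdots\wedge x_{0,m-1})$ a suitable partial derivative is a single wedge monomial, which cannot be proportional to the genuine $\edet$ generator of that weight (a sum of $m$ such monomials).

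Finally, for the multi-factor statement I would use that prolongation commutes with intersection, since both $\Delta^{-1}$ and $S^dE\otimes(-)$ preserve intersections of subspaces. Grouping $E=V_i\otimes W_i$ with $W_i=\bigotimes_{j\ne i}V_j$, the ``antisymmetric in the $i$-th factor'' subspace $B_{[i]}:=\A^{q+1}V_i\otimes\A^{q+1}W_i$ is exactly a two-factor $\det$ situation, and $\bigotimes_i\A^{q+1}V_i=\bigcap_i B_{[i]}$ because this coincides with the all-column isotypic component, which has multiplicity one (as one checks via the diagonal $\SG_{q+1}$-action, the relevant multiplicity being that of the trivial character in $\mathrm{sgn}^{\otimes\ell}$, equal to $1$ since $\ell=2k+2$ is even) and thus equals the iterated-determinant image. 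Applying the two-factor result gives $B_{[i]}^{(d)}=\A^{q+d+1}V_i\otimes\A^{q+d+1}W_i$, and intersecting over $i$ recovers the all-column component in degree $q+d+1$, namely $\bigotimes_i\A^{q+d+1}V_i$, as claimed.
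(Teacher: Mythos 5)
Your argument is essentially sound but follows a genuinely different route from the paper. For the first identity the paper simply quotes the geometric fact $I(\sigma_qY)_{q+1}=\A^{q+1}U\otimes\A^{q+1}W$ for the two-factor Segre variety $Y$ together with \Cref{Thm:Prolong}, whereas you reprove it from scratch by Pieri's rule plus an explicit highest-weight computation ruling out the near-miss partition $(2,1^{m-1})$; your computation with $x_{00}\cdot\det(x_{ab})$ is correct and has the merit of being self-contained and of making transparent why $m\geq 2$ (i.e.\ $q\geq 1$) is needed. For the $\edet$ identity the paper instead tensors with an auxiliary $\A^{q+d+1}\overline{U}$ to reduce to the $\det$ case via \Cref{Prop:Jacobi} and \Cref{Thm:CL}, and for the multi-factor identity it runs an induction on $\ell$ using both two-factor identities; your intersection argument $\bigotimes_i\A^{q+1}V_i=\bigcap_iB_{[i]}$, justified by the multiplicity-one count via Schur--Weyl, is a clean alternative and the observation that prolongation commutes with intersections is correct.

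Two points need repair. First, in the $\edet$ near-miss exclusion the choice of partial derivative is not innocuous: for the highest-weight vector $x_{10}\wedge x_{00}\wedge x_{01}\wedge\cdots\wedge x_{0,m-1}$ the derivative $\partial_{x_{10}}$ produces $x_{00}\wedge\cdots\wedge x_{0,m-1}$, which \emph{is} (up to the factor $m!$) exactly $\edet(u_0^m\otimes w_{\{0,\ldots,m-1\}})\in B_m^{\edet}$, so that derivative proves nothing; you must take $\partial_{x_{00}}$ (or $\partial_{x_{0j}}$), whose output is a single monomial against the $m$-term generator of weight $((m-1,1),(1^m))$, as your parenthetical suggests — make the choice explicit. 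Second, and more substantively, your multi-factor argument is stated only for even $\ell$ (you invoke triviality of $\sgn^{\otimes\ell}$), but the lemma is asserted for arbitrary $\ell$ and the paper genuinely needs the odd case: in the proof of \Cref{Thm:SegreDecompo} the lemma is applied to $\A^{b+q+1}V_1\otimes\cdots\otimes\A^{b+q+1}V_{\ell-1}$ with $\ell-1$ odd, sitting inside the exterior power $\A^{b+q+1}(V_1\otimes\cdots\otimes V_{\ell-1})$. The fix is routine with your tools — for odd $\ell$ group $E=W_i\otimes V_i$ and take $B_{[i]}=S^{q+1}W_i\otimes\A^{q+1}V_i$ inside $\A^{q+1}E$, using your $\edet$ two-factor identity and the fact that the all-column component of $\A^{q+1}E$ has multiplicity one (the multiplicity of $\sgn$ in $\sgn^{\otimes\ell}$) — but as written the proof does not cover the full statement.
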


\begin{proof}
The first equality just reflects the well-known fact that any $2$-factor Segre variety $Y:=\mathbb PU\times\mathbb PW\subseteq\mathbb P(U\otimes W)$ satisfies
$$
I(\sigma_qY)_{q+1}=\A^{q+1}U\otimes\A^{q+1}W\subset S^{q+1}(U\otimes W)
$$
for all $q\geq 1$ (see \Cref{Sec:TanSec} and \Cref{Thm:Prolong} below). For the second equality we begin with a new vector space $\overline{U}$ of dimension $\geq q+d+1$ which plays an auxiliary role. Then with the help of the first equality, we see that
\begin{align*}
    \A^{q+d+1}\overline{U}\otimes(S^{q+1}U\otimes\A^{q+1}W)^{(d)} & = (\A^{q+1}\overline{U})^{(d)}\otimes(S^{q+1}U\otimes\A^{q+1}W)^{(d)} \\
    & \subseteq (\A^{q+1}\overline{U}\otimes S^{q+1}U\otimes\A^{q+1}W)^{(d)} \tag{\Cref{Prop:Jacobi}} \\
    & \subseteq (\A^{q+1}(\overline{U}\otimes U)\otimes\A^{q+1}W)^{(d)} \\
    & =\A^{q+d+1}(\overline{U}\otimes U)\otimes\A^{q+d+1}W,
\end{align*}
hence applying the Cauchy-Littlewood formula (\Cref{Thm:CL}) to $\A^{q+d+1}(\overline{U}\otimes U)$, one finds that $$
(S^{q+1}U\otimes\A^{q+1}W)^{(d)}\subseteq S^{q+d+1}U\otimes\A^{q+d+1}W,
$$
and the reverse containment is easy to show.

Now we prove the last equality. Let us proceed by induction on $\ell\geq 2$. We have already verified the initial step $\ell=2$. Assume the induction hypothesis for one less than $\ell\geq 3$. Writing
$$
R=
\begin{cases}
\A^\ast(V_1\otimes\cdots\otimes V_{\ell-1}) & \textup{if }\ell\textup{ is even} \\
S^\ast(V_1\otimes\cdots\otimes V_{\ell-1}) & \textup{if }\ell\textup{ is odd},
\end{cases}
$$
we are able to exploit the two previous equalities so that
$$
(\A^{q+1}V_1\otimes\cdots\otimes\A^{q+1}V_{\ell-1}\otimes\A^{q+1}V_\ell)^{(d)}\subseteq(R_{q+1}\otimes\A^{q+1}V_\ell)^{(d)}= R_{q+d+1}\otimes\A^{q+d+1}V_\ell.
$$
Taking each $V_i$ in the place of $V_\ell$ we find that
$$
(\A^{q+1}V_1\otimes\cdots\otimes\A^{q+1}V_\ell)^{(d)}\subseteq\A^{q+d+1}V_1\otimes\cdots\otimes\A^{q+d+1}V_\ell.
$$
As above, the inclusion $\supseteq$ is trivial.
\end{proof}

\subsection{Higher secant varieties to higher osculating varieties}\label{Sec:TanSec}

Let $z\in\Sm X$ be a smooth point of $X\subseteq\mathbb PV$ and $n=\dim X$. Take a smooth lifting 
\begin{equation}\label{Eqn:LocalPara}
\phi=\phi(t)=\phi(t_1,\ldots,t_n):(\mathbb C^n,0)\to (\widehat{X},\widehat{z})\subseteq(V^\ast,\widehat{z})
\end{equation} 
of a local parametrization of $X$ at $z$, that is, passing through the projection map $(V^\ast,\widehat{z})\to(\mathbb PV,z)$, we obtain a local parametrization of the germ $(X,z)$. Then for an integer $k\geq 0$ the \emph{projective $k$-osculating space} to $X\subseteq\mathbb PV$ at $z$ is defined as
$$
\mathbb T^k_zX=\mathbb P\left\langle\frac{\partial^{\alpha_1+\cdots+\alpha_n}}{\partial t_1^{\alpha_1}\cdots\partial t_n^{\alpha_n}}\phi(0)\in V^\ast:\alpha_1+\cdots+\alpha_n\leq k\right\rangle^\ast\subseteq\mathbb PV.
$$ For example, if $k=0$, then $\mathbb T^0_zX$ is the point $\{z\}$, if $k=1$, then $\mathbb T^1_zX$ is the projective tangent space $\mathbb T_zX$ to $X$ at $z$, and if $k=2$, then $\mathbb T^2_zX$ is the projective osculating space to $X$ at $z$.

Then our objects of interest are \emph{higher secant varieties} to \emph{higher osculating varieties}. For integers $q\geq 1$ and $k\geq0$ the \emph{$q$-secant variety} to the \emph{$k$-osculating variety} to $X\subseteq\mathbb PV$ is defined to be the Zariski closure
$$
\sigma_q\tau^kX=\overline{\bigcup_{z_1,\ldots,z_q\in\Sm X}\langle\mathbb T^k_{z_1}X,\ldots,\mathbb T^k_{z_q}X\rangle}\subseteq\mathbb PV,
$$
where $(z_1,\ldots,z_q)\in(\Sm X)^q$ runs over all \emph{general} $q$-tuples of smooth points in $X$ so that $\dim\langle\mathbb T^k_{z_1}X,\ldots,\mathbb T^k_{z_q}X\rangle$ attains the maximum, and $\langle-\rangle$ means the linear span in $\mathbb PV$. If $q=1$, then $\tau^kX:=\sigma_1\tau^kX$ is the \emph{$k$-osculating variety} to $X\subseteq\mathbb PV$, and if in addition $k=1$, then $\tau X:=\tau^1X$ is the \emph{tangent variety} to $X\subseteq\mathbb PV$. If $k=0$, then $\sigma_qX:=\sigma_q\tau^0X$ is the \emph{$q$-secant variety} to $X\subseteq\mathbb PV$. Note that $\sigma_q\tau^kX=\sigma_q(\tau^kX)$.

The problem of finding equations of $\sigma_qX\subseteq\mathbb PV$ has been extensively studied, carrying hugely numerous results. Among them we may consider the following as a starting point. 
\begin{enumerate}
    \item $I(\sigma_qX)$ does not have any nonzero $q$-form.
    \item Its $(q+1)$-forms can be computed via the prolongation.
\end{enumerate}

\begin{Thm}[{\cite[Lemma 2.2]{MR1966752}}, cf.\ {\cite[Theorem 1.2]{MR2541390}}]\label{Thm:Prolong}
For integers $q\geq 1$ and $d\geq 0$ we have
    $$
    I(\sigma_qX)_{q+1}^{(d)}=I(\sigma_{q+d}X)_{q+d+1}
    $$
in $S^{q+d+1}V$. In particular, $I(\sigma_qX)_{q+1}=I(X)_2^{(q-1)}$.
\end{Thm}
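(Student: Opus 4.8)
The plan is to prove the displayed equality in each degree by establishing the two inclusions separately, after recording the differential reformulation of prolongation. First I would note that, by the formula for the coproduct $\Delta\colon S^{q+d+1}V\to S^dV\otimes S^{q+1}V$ together with the fact that the monomials $u^\alpha$ ($\alpha\vdash d$) form a basis of $S^dV$, a form $F\in S^{q+d+1}V$ lies in $I(\sigma_qX)_{q+1}^{(d)}$ if and only if every order-$d$ partial derivative $\partial_u^\alpha F$ (with $|\alpha|=d$) belongs to $I(\sigma_qX)_{q+1}$. I would also use the elementary criterion that $G\in I(\sigma_rX)_m$ precisely when $G(\eta_1+\cdots+\eta_r)=0$ for all $\eta_i\in\widehat X$, since such sums are dense in $\widehat{\sigma_rX}$.

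For the inclusion $I(\sigma_{q+d}X)_{q+d+1}\subseteq I(\sigma_qX)_{q+1}^{(d)}$, I would take $F$ vanishing on $\sigma_{q+d}X$ and, for arbitrary $\xi_1,\dots,\xi_q,\eta_1,\dots,\eta_d\in\widehat X$, observe that $\xi_1+\cdots+\xi_q+t_1\eta_1+\cdots+t_d\eta_d\in\widehat{\sigma_{q+d}X}$ for all scalars $t_i$, so $F$ vanishes on it identically. Differentiating in $t_1,\dots,t_d$ at the origin gives $(D_{\eta_1}\cdots D_{\eta_d}F)(\xi_1+\cdots+\xi_q)=0$. Since $X$ is nondegenerate, $\widehat X$ spans $V^\ast$, so the products $\eta_1\cdots\eta_d$ span the space $S^d(V^\ast)$ of order-$d$ differential operators; hence every $\partial_u^\alpha F$ vanishes on $\widehat{\sigma_qX}$, which is the desired membership.

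The reverse inclusion is the crux. Given $F$ all of whose order-$d$ partials vanish on $\widehat{\sigma_qX}$, I would fix $v_1,\dots,v_{q+d}\in\widehat X$ and set $\Phi(s)=F(s_1v_1+\cdots+s_{q+d}v_{q+d})$, a form of degree $q+d+1$ in $n=q+d$ variables; the goal is $\Phi(1,\dots,1)=F(v_1+\cdots+v_{q+d})=0$. The key leverage is that $\widehat X$ is a \emph{cone}: rescaling the $v_i$ shows that for every $e$ with $|e|=d$ and every $c$ supported on a set $A$ of size $\le q$, the point $\sum_i c_iv_i$ lies in $\widehat{\sigma_qX}$ and $\partial_s^e\Phi(c)$ is an order-$d$ directional derivative of $F$ evaluated there, hence vanishes. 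Thus every order-$d$ partial of $\Phi$ vanishes \emph{identically} on each coordinate subspace $L_A=\{c:c_j=0\text{ for }j\notin A\}$ with $|A|=q$. I would then prove the self-contained lemma that this forces $\Phi\equiv0$: vanishing of $\partial_s^e\Phi$ on all such $L_A$ means every monomial of the degree-$(q+1)$ form $\partial_s^e\Phi$ has support of size $>q$, hence is squarefree; but any monomial $s^\gamma$ of $\Phi$ has $\deg\gamma=q+d+1>n$, so one may choose $e\le\gamma$ with $|e|=d$ making $s^{\gamma-e}$ still non-squarefree, and this monomial survives in $\partial_s^e\Phi$ with nonzero coefficient (its $\partial_s^e$-preimage $\gamma$ is unique, so there is no cancellation) — a contradiction. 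Hence $\Phi\equiv0$, giving $F(v_1+\cdots+v_{q+d})=0$; as the $v_i$ are arbitrary, $F\in I(\sigma_{q+d}X)_{q+d+1}$. The final assertion $I(\sigma_qX)_{q+1}=I(X)_2^{(q-1)}$ is then the case of the displayed identity with $1$ in place of $q$ and $q-1$ in place of $d$, using $\sigma_1X=X$.

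I expect the main obstacle to be this reverse inclusion, specifically the passage from ``order-$d$ partials vanish on $\widehat{\sigma_qX}$'' to global vanishing of $\Phi$. The naive route — Taylor-expanding $F(p+r)$ about a point $p\in\widehat{\sigma_qX}$ and hoping each term vanishes — fails, since only the order-$d$ Taylor coefficient is controlled while the higher ones are genuinely nonzero. The resolution is to use the cone structure to upgrade vanishing at finitely many lattice points to vanishing on the full coordinate subspaces $L_A$, after which the strict degree inequality $q+d+1>q+d$ (pigeonhole forcing non-squarefreeness) closes the argument.
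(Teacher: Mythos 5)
The paper does not prove this theorem: it is imported verbatim from the literature (Landsberg--Manivel and Sidman--Sullivant are cited), so there is no in-paper argument to compare yours against. Your proof is correct and self-contained. The forward inclusion is the standard apolarity argument: differentiate $F$ along $d$ auxiliary secant directions $t_1\eta_1,\ldots,t_d\eta_d$ and use that $\widehat X$ spans $V^\ast$ (nondegeneracy), so the operators $D_{\eta_1}\cdots D_{\eta_d}$ span all of $S^dV^\ast$ and hence produce every $\partial_u^\alpha F$. The reverse inclusion is, as you say, the crux, and your combinatorial lemma is sound; the one step you assert without justification --- that for a monomial $s^\gamma$ of $\Phi$ with $|\gamma|=q+d+1$ in $n=q+d$ variables one can choose $e\le\gamma$ with $|e|=d$ and $\gamma-e$ non-squarefree --- does hold but deserves a line: by pigeonhole pick an index $i_0$ with $\gamma_{i_0}\ge 2$; the vector obtained from $\gamma$ by decreasing its $i_0$-th entry by $2$ is nonnegative of total degree $q+d-1\ge d$ (here $q\ge 1$ is used), so one may choose $e$ below it with $|e|=d$, and then $(\gamma-e)_{i_0}\ge 2$. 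Combined with your (correct) no-cancellation remark that $\gamma\mapsto\gamma-e$ is injective on exponent vectors, this yields the contradiction and completes the argument. The remaining ingredients --- the cone structure of $\widehat X$ upgrading vanishing to the coordinate subspaces $L_A$, the density of sums of $q+d$ points of $\widehat X$ in $\widehat{\sigma_{q+d}X}$, and the derivative reformulation of $B^{(d)}$ via the coproduct formula --- are all handled correctly.
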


As for equations of tangent varieties, Oeding and Raicu have focused on the case of Segre-Veronese varieties
$$
Y:=\mathbb PV_1\times\cdots\times\mathbb PV_\ell\subseteq\mathbb P(S^{d_1}V_1\otimes\cdots\otimes S^{d_\ell}V_\ell).
$$

\begin{Thm}[{\cite[Theorem B]{MR3240996}}]\label{Thm:OR}
Let $Y$ be as above. Then $I(\tau Y)$ is generated as
$$
I(\tau Y)=(I(\tau Y)_2,I(\tau Y)_3,I(\tau Y)_4),
$$
and in this expression the following hold.
\begin{enumerate}
    \item $I(\tau Y)_2$ is unnecessary if and only if $d_1+\cdots+d_\ell\leq 3$.
    \item $I(\tau Y)_3$ is unnecessary if and only if $Y$ belongs to one of the following cases.
    \begin{enumerate}
        \item $\OO_Y(1)=\OO_{\mathbb P^1}(d)$ for some $d\geq 3$ except $d=4$.
        \item $\OO_Y(1)=\OO_{\mathbb P^s\times\mathbb P^1}(1,d)$ for some $s\geq 1$ and $d\geq5$.
        \item $\tau Y=\mathbb P(S^{d_1}V_1\otimes\cdots\otimes S^{d_\ell}V_\ell)$.
    \end{enumerate}
    \item $I(\tau Y)_4$ is unnecessary if and only if $\{d_1,\ldots,d_\ell\}\not\supseteq\{1,1,1\},\{1,2\},\{3\}$ as a multiset.
\end{enumerate}
\end{Thm}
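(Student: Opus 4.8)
Set $W=S^{d_1}V_1\otimes\cdots\otimes S^{d_\ell}V_\ell$ and $G=\GL(V_1)\times\cdots\times\GL(V_\ell)$. The whole statement is governed by $G$-equivariance: since $\tau Y\subseteq\mathbb PW$ is $G$-invariant, each graded piece $I(\tau Y)_j\subseteq S^jW$ is a $G$-subrepresentation, hence a direct sum of isotypic components $S^{\lambda^1}V_1\otimes\cdots\otimes S^{\lambda^\ell}V_\ell$ indexed by multipartitions $(\lambda^1,\ldots,\lambda^\ell)$ with $|\lambda^i|=jd_i$. By \Cref{Prop:TrivialitySchurtoSchur} the bookkeeping is clean, so the plan is to decide, for every $j$, which such components lie in $I(\tau Y)_j$, and then to read off both the generation degree and the listed if-and-only-if conditions from that description.

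First I would write the explicit parametrization of the affine cone. A general point of $\widehat{\tau Y}$ has the form $\sum_{i=1}^\ell x_1^{d_1}\otimes\cdots\otimes(x_i^{d_i-1}y_i)\otimes\cdots\otimes x_\ell^{d_\ell}$, with $x_i,y_i\in V_i$, obtained by differentiating the Segre--Veronese parametrization. Then $I(\tau Y)_j$ is exactly the kernel of the evaluation map from $S^jW$ to the ring of functions on the space of such tuples, and the entire problem is to compute this kernel as a $G$-module. It is convenient to note at the outset that $\tau Y\subseteq\sigma_2Y$, so that the quadrics through $\sigma_2Y$ — exterior-power flattenings, in the spirit of the minors computed by \Cref{Thm:Prolong} — already sit inside $I(\tau Y)_2$, giving a first handle on the low-degree part.

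To make the computation finite I would reduce to generic dimensions via the inheritance principle: a component $S^{\lambda^1}V_1\otimes\cdots\otimes S^{\lambda^\ell}V_\ell$ whose $\lambda^i$ have at most $\dim V_i$ rows survives restriction from larger factors, so one may assume every $\dim V_i$ is large and the surviving components are then governed purely by plethysm data independent of the $\dim V_i$. With all Schur modules nonzero, identify the low-degree pieces directly: $I(\tau Y)_2$ is assembled from exterior-power flattenings generalizing the $\A^2V_1\otimes\cdots\otimes\A^2V_4$ of the four-factor Segre case, and a multiplicity count shows it vanishes exactly when $\sum_id_i\leq3$. Computing $I(\tau Y)_3$ and $I(\tau Y)_4$ in the same equivariant language then reduces the if-and-only-if statements (a)--(c) for degree $3$ and the multiset condition for degree $4$ to a finite check on small configurations.

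The main obstacle is proving that no minimal generators occur in degree $\geq 5$, i.e. that $K_{0,j}(I(\tau Y))=0$ for all $j\geq5$, equivalently that the multiplication map
\[
\bigoplus_{2\leq a\leq4}I(\tau Y)_a\otimes S^{j-a}W\longrightarrow I(\tau Y)_j
\]
is surjective for every $j\geq5$. I would attack this by bounding, through the generic reduction, the multipartitions that can appear in $I(\tau Y)_j$ and showing that each such component is produced by multiplying a degree-$\leq4$ equation by a monomial — a prolongation-style argument in the spirit of \Cref{Thm:Prolong}, but adapted to $\tau Y$ in place of $\sigma_qX$. The delicate point is that the genuine degree-$4$ generators occur precisely for the exceptional configurations in which the multiset $\{d_1,\ldots,d_\ell\}$ contains $\{1,1,1\}$, $\{1,2\}$, or $\{3\}$ — the three-factor Segre, the degree-$(1,2)$ Segre--Veronese, and the cubic Veronese — which behave like defective secant situations; isolating these and showing the remainder of the ideal is prolonged from the degree-$\leq4$ pieces is where the representation-theoretic heavy lifting sits, and I expect it to require the full Schur-functor and plethysm machinery rather than the elementary tools recorded in the preliminaries.
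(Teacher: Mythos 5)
This statement is not proved in the paper at all: it is \Cref{Thm:OR}, quoted verbatim from Oeding--Raicu \cite{MR3240996} as background, so there is no internal proof to compare your attempt against. Judged on its own terms, your proposal correctly identifies the top-level strategy that Oeding and Raicu actually follow --- $\GL(V_1)\times\cdots\times\GL(V_\ell)$-equivariance, decomposition of $I(\tau Y)_j$ into isotypic components indexed by multipartitions, reduction to large $\dim V_i$ by inheritance, and a separate argument bounding the degrees of minimal generators --- but as written it is an outline of a proof, not a proof. Every genuinely hard step is deferred: you never compute which multipartitions actually occur in $I(\tau Y)_2$, $I(\tau Y)_3$, $I(\tau Y)_4$ (the paper records the answer for degree $2$ right after the theorem: components $S^{\lambda^1}V_1\otimes\cdots\otimes S^{\lambda^\ell}V_\ell$ with $\lambda^i=(2d_i-a_i,a_i)$ and $a_1+\cdots+a_\ell$ a positive even integer $\geq 4$, which for $d_i\geq 2$ are not exterior-power flattenings, so your description of the quadrics as ``exterior-power flattenings generalizing $\A^2V_1\otimes\cdots\otimes\A^2V_4$'' is already inaccurate); and the claim that every component of $I(\tau Y)_j$ for $j\geq 5$ is obtained by multiplying a lower-degree equation by a monomial is precisely the theorem's content, asserted rather than established.

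Concretely, the missing ideas are: (i) the mechanism Oeding--Raicu use to make the plethysm bookkeeping tractable, namely passing to ``generic equations'' via polarization and working $\SG_n$-equivariantly, first for the Segre case $d_1=\cdots=d_\ell=1$ and then specializing to Segre--Veronese; and (ii) any actual argument for generation in degree $\leq 4$, which in their paper rests on an explicit description of all covariants of $\tau Y$ in every degree, not merely on a ``prolongation-style argument in the spirit of \Cref{Thm:Prolong}'' --- the prolongation statement of \Cref{Thm:Prolong} is specific to secant varieties of the form $\sigma_qX$ and does not transfer to $\tau Y$ without substantial new input. Your final paragraph essentially concedes that the representation-theoretic heavy lifting remains to be done, which is an honest assessment: what you have is a correct identification of the shape of the argument and of where the difficulty lies, not a completed proof of the cited theorem.
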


Furthermore, \cite[Section 5]{MR3240996} explicitly describes the minimal generators of $I(\tau Y)$ for any Segre-Veronese variety $Y$. Accordingly, $I(\tau Y)_2$ has the form
\begin{equation}
I(\tau Y)_2=\bigoplus_{\lambda^1,\ldots,\lambda^\ell}S^{\lambda^1}V_1\otimes\cdots\otimes S^{\lambda^\ell}V_\ell,   
\end{equation}
where $\lambda^1\vdash 2d_1,\ldots,\lambda^\ell\vdash 2d_\ell$ run over all partitions of the forms
$$
\lambda^i=(2d_i-a_i\geq a_i)\quad\text{with}\quad a_1+\cdots+a_\ell\in\{4,6,8,\ldots\}.
$$
For example, if $Y=\mathbb PV_1\times\cdots\times\mathbb PV_4\subseteq\mathbb P(V_1\otimes\cdots\otimes V_4)$, then \begin{equation}\label{Eqn:4FactorQuadric}
 I(\tau Y)_2=\A^2V_1\otimes\cdots\otimes\A^2V_4.
\end{equation}
The work of Oeding and Raicu settles a conjecture of Landsberg and Weyman on generators of $I(\tau Y)$ for Segre varieties $Y$, that is, $d_1=\cdots=d_\ell=1$.

We end this subsection with an observation about the equations of the smallest possible degree for $\sigma_q\tau^kX\subseteq\mathbb PV$. Let $\emptyset\neq U\subseteq\Sm X$ be any open dense subset that maximizes $\dim\mathbb T^k_zX$ for all $z\in U$, and put $L=\OO_X(1)|_U$. Then we may view $S^{q+1}V$ as a subspace of $T^{q+1}H^0(L)=H^0(U^{q+1},L^{\boxtimes q+1})$, where $L^{\boxtimes q+1}$ is the box product $L\boxtimes\cdots\boxtimes L$ on the $(q+1)$-th Cartesian power $U^{q+1}$. 
Consider the big diagonal $\Delta^{q+1}:=\bigcup_{0\leq i<j\leq q}\Delta_{i,j}\subset U^{q+1}$, where $\Delta_{i,j}:=\{(z_0,\ldots,z_q)\in U^{q+1}:z_i=z_j\}$. Recall that the $\ell$-th symbolic power $\II_{\Delta^{q+1}/U^{q+1}}^{(\ell)}$ is the ideal sheaf of all functions having vanishing order $\geq\ell$ along $\Delta^{q+1}$ and that $\II_{\Delta^2/U^2}^{(\ell)}$ coincides with the $\ell$-th ordinary power $\II_{\Delta^2/U^2}^l$ since $U$ is smooth.

\begin{Thm}\label{Thm:BottomEqn}
Let $q\geq1$ and $k\geq0$ be integers Then
$$
I(\sigma_q\tau^kX)_{q+1}=S^{q+1}V\cap H^0\left(U^{q+1},L^{\boxtimes q+1}\otimes\II_{\Delta^{q+1}/U^{q+1}}^{(2k+2)}\right)
$$
in $H^0(U^{q+1},L^{\boxtimes q+1})$.
\end{Thm}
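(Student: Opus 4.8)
The plan is to reduce the statement to the case $q=1$ by means of the prolongation formula (\Cref{Thm:Prolong}) applied to the variety $\tau^kX$, and then to settle that base case by a direct Taylor analysis along the diagonal. Since $X\subseteq\tau^kX$, the osculating variety $\tau^kX$ is nondegenerate, so \Cref{Thm:Prolong} applied to it (using $\sigma_q\tau^kX=\sigma_q(\tau^kX)$) gives $I(\sigma_q\tau^kX)_{q+1}=I(\tau^kX)_2^{(q-1)}$. It therefore suffices to prove: (A) the base case $I(\tau^kX)_2=S^2V\cap H^0(U^2,L^{\boxtimes 2}\otimes\II_{\Delta^2/U^2}^{(2k+2)})$; and (B) that the $(q-1)$-st prolongation of the right-hand side of (A) is exactly the right-hand side for general $q$.

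For (B) I would reinterpret prolongation as partial evaluation. Writing $B=I(\tau^kX)_2\subseteq S^2V$ and using the coproduct $\Delta\colon S^{q+1}V\to S^{q-1}V\otimes S^2V$, membership $f\in B^{(q-1)}$ means $(\xi\otimes\id)\Delta(f)\in B$ for every $\xi\in(S^{q-1}V)^\ast$. Because $X$ is nondegenerate, the lifts $\widehat\zeta\in V^\ast$ of points $\zeta\in U$ span $V^\ast$, so the products $\widehat\zeta_0\cdots\widehat\zeta_{q-1}$ span $(S^{q-1}V)^\ast$; hence $f\in B^{(q-1)}$ if and only if the partial evaluation $\tilde f(\widehat\zeta_0,\ldots,\widehat\zeta_{q-1},-,-)\in S^2V$ lies in $B$ for all (equivalently, generic) $\zeta\in U^{q-1}$. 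Under $S^2V\hookrightarrow H^0(U^2,L^{\boxtimes 2})$ this partial evaluation is precisely the restriction of $f$ to the slice of $U^{q+1}$ obtained by fixing the chosen $q-1$ coordinates at $\zeta$; since such a slice is transverse to $\Delta_{i,j}$ at a generic point, (A) shows that $f\in B^{(q-1)}$ if and only if $f$ vanishes to order $\geq 2k+2$ along each $\Delta_{i,j}$. As $\Delta^{q+1}=\bigcup_{i<j}\Delta_{i,j}$ is reduced with smooth components, $\II_{\Delta^{q+1}/U^{q+1}}^{(2k+2)}=\bigcap_{i<j}\II_{\Delta_{i,j}/U^{q+1}}^{2k+2}$, which is exactly this condition.

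The heart is the base case (A). Fix a generic $z\in U$ and a smooth local lifting $\phi$ as in \eqref{Eqn:LocalPara}; for $f\in S^2V$ with symmetric polarization $\tilde f$, set $\Phi(t^{(0)},t^{(1)})=\tilde f(\phi(t^{(0)}),\phi(t^{(1)}))$, the local expression of the section of $L^{\boxtimes 2}$ attached to $f$. In coordinates $m=\tfrac12(t^{(0)}+t^{(1)})$ and $s=t^{(0)}-t^{(1)}$, symmetry of $\tilde f$ forces $\Phi(m,-s)=\Phi(m,s)$, so the coefficients $c_b(m):=\tfrac1{b!}\partial_s^b\Phi|_{s=0}$ vanish for all $|b|$ odd; hence $\Phi$ vanishes to order $\geq 2k+2$ along $\Delta$ if and only if $c_b\equiv 0$ for all $|b|\leq 2k$. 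Meanwhile $f$ vanishes on $\widehat{\tau^kX}$ if and only if $\tilde f$ annihilates $\widehat{\mathbb T^k_zX}=\langle\partial_m^\alpha\phi:|\alpha|\leq k\rangle$ for generic $z$, that is, if and only if $A_{\alpha,\gamma}:=\partial_{t^{(0)}}^\alpha\partial_{t^{(1)}}^\gamma\Phi|_\Delta=\tilde f(\partial^\alpha\phi,\partial^\gamma\phi)$ vanishes identically for all $|\alpha|,|\gamma|\leq k$. Using $\partial_{t^{(0)}}=\tfrac12\partial_m+\partial_s$ and $\partial_{t^{(1)}}=\tfrac12\partial_m-\partial_s$, one expands $A_{\alpha,\gamma}=\sum_{p+q=\alpha+\gamma}\kappa_{p,q}\,\partial_m^p(q!\,c_q)$ with pure coefficient $\kappa_{0,\alpha+\gamma}=(-1)^{|\gamma|}$. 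The equivalence of the two conditions then follows: one direction is immediate, while for the other I would induct on $|b|=2r\leq 2k$, choosing a split $b=\alpha+\gamma$ with $|\alpha|,|\gamma|\leq k$ — available exactly because $|b|\leq 2k$ — so that, the lower coefficients $c_q$ ($|q|<|b|$) already vanishing, one gets $A_{\alpha,\gamma}=(-1)^{|\gamma|}b!\,c_b$ and hence $c_b\equiv 0$.

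I expect the main obstacle to be precisely this numerical reconciliation inside (A): vanishing on the $k$-osculating variety constrains $\tilde f$ only on derivatives of order $\leq k$, whereas order $2k+2$ along $\Delta$ a priori involves the quantities $\tilde f(\partial^\alpha\phi,\partial^\gamma\phi)$ with $|\alpha|$ as large as $2k$. The evenness forced by the symmetry of $f$ (which kills all odd-order coefficients and thereby upgrades order $2k+1$ to order $2k+2$), together with the fact that a split of $b$ into two parts of size $\leq k$ exists exactly up to $|b|=2k$, is what matches the two indices. The careful multi-index bookkeeping here, valid for $\dim X$ arbitrary, is the delicate point; by contrast the globalization in (B) — transversality of the generic slice and the symbolic-power description of $\Delta^{q+1}$ — should be routine.
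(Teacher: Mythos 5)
Your proposal is correct and takes essentially the same route as the paper: reduce to the case $q=1$ via \Cref{Thm:Prolong} applied to $\tau^kX$ and then compare jets along the diagonal, with the symmetry of the quadric killing all odd-order coefficients and the splitting of a multi-index of size $\leq 2k$ into two halves of size $\leq k$ doing the numerical matching. The paper's \Cref{Lem:BottomEqnLem} organizes this comparison as an induction on $|\beta|+|\gamma|$ in the original coordinates rather than in your $(m,s)$-coordinates, but the content is the same.
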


\begin{proof}
First, we check the case $q=1$. Let $Q\in S^2V$ stand for a quadratic form. Due to the short exact sequences
$$
\begin{tikzcd}
    0 \ar[r] & \II_{\Delta^2/U^2}^{(i+1)} \ar[r] & \II_{\Delta^2/U^2}^{(i)} \ar[r] & S^i\Omega_{\Delta^2} \ar[r] & 0,
\end{tikzcd}
$$
the following are equivalent.
\begin{enumerate}
\item\label{Item:BottomEqn1} $Q\in H^0\left(L^{\boxtimes 2}\otimes\II_{\Delta^2/U^2}^{(2k+2)}\right)$.
\item\label{Item:BottomEqn2} The symmetric bilinear form of $Q$ satisfies
\begin{equation}\label{Eqn:QuadraticHigherVanishing}
Q(\partial^\beta\phi(t),\partial^\gamma\phi(t)):=Q\left(\frac{\partial^{\beta_1+\cdots+\beta_n}}{\partial t_1^{\beta_1}\cdots\partial t_n^{\beta_n}}\phi(t),\frac{\partial^{\gamma_1+\cdots+\gamma_n}}{\partial t_1^{\gamma_1}\cdots\partial t_n^{\gamma_n}}\phi(t)\right)\equiv0
\end{equation}
for all $\beta=(\beta_1,\ldots,\beta_n)$ and $\gamma=(\gamma_1,\ldots,\gamma_n)$ in $\mathbb N^n$ with $|\beta|\leq k$ and $|\gamma|\leq k$, where $\phi(t)$ is as in \eqref{Eqn:LocalPara}.
\item $Q\in I(\tau^kX)_2$.
\end{enumerate}
For the equivalence of \ref{Item:BottomEqn1} and \ref{Item:BottomEqn2} see \Cref{Lem:BottomEqnLem} below. Thus, we have arrived at
$$
I(\tau^kX)_2=S^2V\cap H^0\left(L^{\boxtimes2}\otimes\II_{\Delta^2/U^2}^{(2k+2)}\right).
$$

Second, we handle the remaining cases $q\geq2$. Note that the coproduct map $\Delta:S^{q+1}V\to S^{q-1}V\otimes S^2V$ can be realized by selecting any pair of factors in $T^{q+1}V$. This implies that
\begin{align*}
    I(\sigma_q\tau^kX)_{q+1} & =I(\tau^kX)_2^{(q-1)} \tag{\Cref{Thm:Prolong}} \\
    & =\left(S^2V\cap H^0\left(L^{\boxtimes2}\otimes\II_{\Delta^2/U^2}^{(2k+2)}\right)\right)^{(q-1)} \tag{by the case $q=1$}\\
    & =S^{q+1}V\cap H^0\left(L^{\boxtimes q+1}\otimes\bigcap_{0\leq i<j\leq q}\II_{\Delta_{i,j}/U^{q+1}}^{(2k+2)}\right) \\
    & =S^{q+1}V\cap H^0\left(L^{\boxtimes q+1}\otimes\II_{\Delta^{q+1}/U^{q+1}}^{(2k+2)}\right).
\end{align*}
\end{proof}

\begin{Lem}\label{Lem:BottomEqnLem}
Let $Q$ be as above. Then the containment
\begin{equation}\label{Eqn:QuadricContainment}
Q\in H^0\left(U^2,L^{\boxtimes 2}\otimes\II_{\Delta^2/U^2}^{(2k+2)}\right)
\end{equation}
is equivalent to the vanishing \eqref{Eqn:QuadraticHigherVanishing} in each of the following ranges.
\begin{enumerate}[label=\textup{(R\arabic*)}, leftmargin=3.5em, itemsep=0.5em]
    \item\label{Item:VanishingOrderFirstStep} $\beta=0$ and $|\gamma|\leq 2k+1$.
    \item\label{Item:VanishingOrderFirstStep2} $|\beta|+|\gamma|\leq 2k+1$.
    \item\label{Item:VanishingOrderFirstStep3} $|\beta|\leq k$ and $|\gamma|\leq k$.
\end{enumerate}

\end{Lem}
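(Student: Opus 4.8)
The plan is to treat range (R2) as the primary condition, show it is \emph{literally} the containment \eqref{Eqn:QuadricContainment}, and then prove that the smaller ranges (R1) and (R3) already force it. Throughout I abbreviate
$$
f_{\beta,\gamma}(t):=Q(\partial^\beta\phi(t),\partial^\gamma\phi(t)),\qquad\beta,\gamma\in\mathbb N^n,
$$
so that the symmetry of the bilinear form of $Q$ gives $f_{\beta,\gamma}=f_{\gamma,\beta}$, while the product rule gives the single structural identity
$$
\partial_{t_i}f_{\beta,\gamma}=f_{\beta+e_i,\gamma}+f_{\beta,\gamma+e_i}
$$
for each unit multi-index $e_i$. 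These two facts are the only inputs I will use about the $f_{\beta,\gamma}$.

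First I would establish the equivalence of \eqref{Eqn:QuadricContainment} with the vanishing in range (R2). Since $U$ is smooth, the symbolic power $\II_{\Delta^2/U^2}^{(2k+2)}$ equals the ordinary power, so membership of the section $Q$ means exactly that $Q$ vanishes to order $\geq 2k+2$ along the diagonal. Trivializing $L^{\boxtimes2}$ near a diagonal point and expressing the section through the lifting $\phi$ of \eqref{Eqn:LocalPara}, the corresponding local function is $F(s,t)=Q(\phi(s),\phi(t))$ up to a nonvanishing factor, which does not affect the order of vanishing along the diagonal. For a smooth subvariety the power $\II_{\Delta^2/U^2}^{2k+2}$ consists precisely of the functions all of whose partial derivatives of order $\leq 2k+1$ vanish along the diagonal; since $\partial_s^\beta\partial_t^\gamma F|_{s=t}=f_{\beta,\gamma}$, this is exactly the assertion that $f_{\beta,\gamma}\equiv0$ for all $|\beta|+|\gamma|\leq 2k+1$, namely range (R2).

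Next, because range (R1) (where $\beta=0$, $|\gamma|\leq 2k+1$) and range (R3) (where $|\beta|,|\gamma|\leq k$) are both contained in range (R2), the vanishing in (R2) trivially implies the vanishing in (R1) and in (R3). It therefore remains to prove the two reverse implications. The implication (R1)$\Rightarrow$(R2) is a short induction on $|\beta|$: writing $\beta=\beta'+e_i$ and rearranging the structural identity as $f_{\beta'+e_i,\gamma}=\partial_{t_i}f_{\beta',\gamma}-f_{\beta',\gamma+e_i}$, both terms on the right vanish by the inductive hypothesis, since each has strictly smaller first index while the total degree stays $\leq 2k+1$; the base case $|\beta|=0$ is exactly (R1).

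The implication (R3)$\Rightarrow$(R2) is the heart of the lemma, and I expect it to be the main obstacle, since (R3) is a ``square'' range while (R2) is the larger ``triangular'' one and the structural identity only raises the total degree. I would argue by strong induction on the total degree $N=|\beta|+|\gamma|$, proving that all $f_{\beta,\gamma}$ of degree $N$ vanish for $N=0,\ldots,2k+1$. For $N\leq k$ every such pair already lies in the square, so this is the hypothesis (R3). For $k<N\leq 2k+1$ the key tool is the \emph{box-moving relation}: whenever $f_{\rho,\tau}\equiv0$, the identity forces $f_{\rho+e_i,\tau}=-f_{\rho,\tau+e_i}$, so transferring a box between the two slots flips the sign; as every intermediate origin has degree $N-1$, the inductive hypothesis connects any two degree-$N$ pairs with the same $\beta+\gamma$ up to sign. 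For even $N\leq 2k$ I then split $\beta+\gamma$ into two halves of size $N/2\leq k$, which lands in the square and so is zero, forcing the whole orbit to vanish; for odd $N\leq 2k+1$ I instead move all boxes from one slot to the other to obtain $f_{\mu,0}=(-1)^N f_{0,\mu}=(-1)^N f_{\mu,0}$, and oddness of $N$ gives $f_{\mu,0}\equiv0$, whence every degree-$N$ term vanishes. This completes the induction and the equivalence of all four conditions. The one point to verify carefully is that the prerequisite of each box-move has total degree exactly $N-1$, so that the induction never becomes circular.
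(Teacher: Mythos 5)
Your proof is correct and follows essentially the same route as the paper's: identify the symbolic power with the ordinary power of the smooth diagonal, characterize the containment by vanishing of derivatives in local coordinates, and then use the Leibniz identity $\partial_{t_i}Q(\partial^\beta\phi,\partial^\gamma\phi)=Q(\partial^{\beta+e_i}\phi,\partial^\gamma\phi)+Q(\partial^\beta\phi,\partial^{\gamma+e_i}\phi)$ to move boxes between the two slots by induction on total degree, with the symmetry of $Q$ handling the top odd degree $2k+1$. The only cosmetic difference is that you anchor the geometric equivalence at range (R2) (differentiating in both factors) whereas the paper anchors it at (R1) (parametrizing by $(\phi(t),\phi(t+s))$ and differentiating only in $s$); the box-moving induction then closes the loop identically in both versions.
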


\begin{proof}
First, we show that the containment \eqref{Eqn:QuadricContainment} holds if and only if the case \ref{Item:VanishingOrderFirstStep} supports the vanishing \eqref{Eqn:QuadraticHigherVanishing}. The problem is analytic-local: The smooth variety $U^2$ around $\Delta^2$ can be locally parametrized by 
$$
(\phi(t),\phi(t+s))\in(\widehat{X}^2,(\widehat{z},\widehat{z})),
$$
where $s=(s_1,\ldots,s_n)\in(\mathbb C^n,0)$ is a copy of $t=(t_1,\ldots,t_n)$, and the ideal sheaf $\II_{\Delta^2/U^2}$ is generated as 
$$
\II_{\Delta^2/U^2}=(s_1,\ldots,s_n)\subset\OO_{U^2}
$$
up to the analytic localization. So the equivalence in question follows from taking
$$
\left.\frac{\partial^{\gamma_1+\cdots+\gamma_n}}{\partial s_1^{\gamma_1}\cdots\partial s_n^{\gamma_n}}Q(\phi(t),\phi(t+s))\right|_{s=0}
$$
into account.

Second, since the range \ref{Item:VanishingOrderFirstStep2} is bigger than both \ref{Item:VanishingOrderFirstStep} and \ref{Item:VanishingOrderFirstStep3}, the corresponding directions are immediate consequences. 

Finally, for their converses we proceed by finite induction on $0\leq\ell:=|\beta|+|\gamma|\leq 2k+1$. The first step $\ell=0$ is trivial, hence we assume that $\ell\geq1$. Consider the case $\beta_i\geq 1$ for instance. Since $|\beta-e_i|+|\gamma|=\ell-1$, the induction hypothesis gives 
$$
Q(\partial^{\beta-e_i}\phi(t),\partial^\gamma\phi(t))\equiv0.
$$
Differentiating with respect to $t_i$ we obtain 
$$
Q(\partial^\beta\phi(t),\partial^\gamma\phi(t))\equiv-Q(\partial^{\beta-e_i}\phi(t),\partial^{\gamma+e_i}\phi(t))
$$
and its analogues. Therefore, applying the rule successively one can see that
$$Q(\partial^\beta\phi(t),\partial^\gamma\phi(t))\equiv(-1)^{|\beta|-|\beta'|}Q(\partial^{\beta'}\phi(t),\partial^{\gamma'}\phi(t))
$$
for any $\beta',\gamma'\in\mathbb N^n$ with $|\beta'|+|\gamma'|=\ell$. 
The desired result comes from a suitable choice of $(\beta',\gamma')$:
\begin{enumerate}
    \item If we have supposed the condition \ref{Item:VanishingOrderFirstStep}, then we put $(\beta',\gamma')=(0,\beta+\gamma)$.
    \item As for the case \ref{Item:VanishingOrderFirstStep3} if $\ell\leq 2k$, then we take $(\beta',\gamma')$ such that $|\beta'|\leq k$ and $|\gamma'|\leq k$, and if $\ell=2k+1$, then we put $(\beta',\gamma')=(\gamma,\beta)$, where the symmetry of $Q$ is used.
\end{enumerate}
\end{proof}

\begin{Rmk}
At least if $\dim X\leq 2$, then we have
$$
\II_{\Delta^{q+1}/U^{q+1}}^{(\ell)}=\II_{\Delta^{q+1}/U^{q+1}}^\ell
$$
for all $\ell\geq 1$. In dimension one the reason is that the diagonals are just effective divisors, and in dimension two we refer to \cite[Corollary 3.8.3]{MR1839919}.
\end{Rmk}

\Cref{Thm:BottomEqn} reveals a geometric meaning of \eqref{Eqn:4FactorQuadric} and applies extensively to higher secant varieties to the $k$-osculating varieties to $(2k+2)$-factor Segre varieties.

\begin{Cor}\label{Cor:SegreBottomEquation}
Take a Segre variety $Y:=\mathbb PV_1\times\cdots\times\mathbb PV_{2k+2}\subseteq\mathbb P(V_1\otimes\cdots\otimes V_{2k+2})$ with $2k+2$ factors. Then $I(\sigma_q\tau^kY)_{q+1}$ has the form
$$
I(\sigma_q\tau^kY)_{q+1}=\A^{q+1}V_1\otimes\cdots\otimes\A^{q+1}V_{2k+2}
$$
over $\GL(V_1)\times\cdots\times\GL(V_{2k+2})$.
\end{Cor}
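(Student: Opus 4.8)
The plan is to reduce everything to the quadratic case $q=1$ and then identify $I(\tau^kY)_2$ summand by summand, using the differential vanishing criterion together with the iterated Cauchy--Littlewood decomposition. First I would apply the ``in particular'' part of \Cref{Thm:Prolong} with base variety $\tau^kY$ to write $I(\sigma_q\tau^kY)_{q+1}=I(\tau^kY)_2^{(q-1)}$. Then, once the case $q=1$ is settled---namely $I(\tau^kY)_2=\A^2V_1\otimes\cdots\otimes\A^2V_{2k+2}$---\Cref{Lem:WedgeProlong}, applied with its integer $q$ set to $1$ and its $d$ set to our $q-1$, immediately upgrades this to
$$
I(\sigma_q\tau^kY)_{q+1}=(\A^2V_1\otimes\cdots\otimes\A^2V_{2k+2})^{(q-1)}=\A^{q+1}V_1\otimes\cdots\otimes\A^{q+1}V_{2k+2}.
$$
This reduction is what keeps the representation theory manageable: $S^2$ of a tensor product is multiplicity-free, whereas $S^{q+1}$ is not.

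For the case $q=1$, the space $I(\tau^kY)_2$ is a $\GL(V_1)\times\cdots\times\GL(V_{2k+2})$-subrepresentation of $S^2(V_1\otimes\cdots\otimes V_{2k+2})$. Iterating the Cauchy--Littlewood formula (\Cref{Thm:CL}) in degree two, and using that $S^2(A\otimes B)=S^2A\otimes S^2B\oplus\A^2A\otimes\A^2B$ preserves the parity of the number of exterior factors, gives the multiplicity-free decomposition
$$
S^2(V_1\otimes\cdots\otimes V_{2k+2})=\bigoplus_{\substack{S\subseteq\{1,\ldots,2k+2\}\\ |S|\text{ even}}}\Big(\bigotimes_{i\in S}\A^2V_i\Big)\otimes\Big(\bigotimes_{i\notin S}S^2V_i\Big).
$$
Hence $I(\tau^kY)_2$ is a direct sum of some of these irreducible summands, and it remains to decide which ones occur. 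The criterion is supplied by \Cref{Thm:BottomEqn} at $q=1$, made concrete by range \ref{Item:VanishingOrderFirstStep3} of \Cref{Lem:BottomEqnLem}: a quadric $Q$ lies in $I(\tau^kY)_2$ if and only if $Q(\partial^\beta\phi,\partial^\gamma\phi)\equiv0$ for all multi-indices with $|\beta|\leq k$ and $|\gamma|\leq k$. Using the product parametrization $\phi(t)=v_1(t^{(1)})\otimes\cdots\otimes v_{2k+2}(t^{(2k+2)})$ of the Segre variety, with each factor parametrized linearly (so $\partial^\beta\phi=\bigotimes_i\partial^{\beta^{(i)}}v_i$), a representative $Q=\bigotimes_{i\in S}A_i\otimes\bigotimes_{i\notin S}B_i$ of the summand indexed by $S$, with $A_i\in\A^2V_i$ antisymmetric and $B_i\in S^2V_i$ symmetric, evaluates as
$$
Q(\partial^\beta\phi,\partial^\gamma\phi)=\prod_{i\in S}A_i(\partial^{\beta^{(i)}}v_i,\partial^{\gamma^{(i)}}v_i)\prod_{i\notin S}B_i(\partial^{\beta^{(i)}}v_i,\partial^{\gamma^{(i)}}v_i).
$$

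The two directions then follow from a counting argument. For $S=\{1,\ldots,2k+2\}$ every factor is antisymmetric, so the product vanishes as soon as $\beta^{(i)}=\gamma^{(i)}$ for a single $i$; but if $\beta^{(i)}\neq\gamma^{(i)}$ for all $i$, then $|\beta^{(i)}|+|\gamma^{(i)}|\geq1$ for each of the $2k+2$ indices, forcing $|\beta|+|\gamma|\geq2k+2>2k$, which is impossible under $|\beta|,|\gamma|\leq k$. Thus the all-exterior summand always satisfies the vanishing and lies in $I(\tau^kY)_2$. Conversely, for an even set $S$ with $|S|=2s<2k+2$, I would take $\beta^{(i)}=\gamma^{(i)}=0$ for $i\notin S$, so that $B_i(v_i,v_i)\not\equiv0$ for generic $B_i$, and on the $2s$ indices of $S$ place a single first-order derivative, assigned to $\gamma$ on $s$ of them and to $\beta$ on the other $s$; this keeps $|\beta|=|\gamma|=s\leq k$ while making each $A_i(v_i,\partial v_i)\not\equiv0$ for generic $A_i$. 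The resulting product is not identically zero, so this summand is excluded. Since the decomposition is multiplicity-free, we conclude $I(\tau^kY)_2=\A^2V_1\otimes\cdots\otimes\A^2V_{2k+2}$, and the first paragraph completes the proof. The main obstacle is the bookkeeping in this last step: one must pass cleanly from the symbolic-power condition of \Cref{Thm:BottomEqn} to the explicit product formula, and then observe that the evenness of $|S|$ plays a double role, being exactly what makes $S$ a genuine summand and what permits the balanced splitting of the derivative budgets $|\beta|,|\gamma|\leq k$.
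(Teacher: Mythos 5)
Your proposal is correct, but it reaches the conclusion by a genuinely different route from the paper's. The paper proves the statement for all $q$ simultaneously: it applies \Cref{Thm:BottomEqn} with $U=Y$, uses the K{\"u}nneth formula to split $H^0(\OO_Y(1)^{\boxtimes q+1})$ into the factors $H^0(\OO_{\mathbb PV_i}(1)^{\boxtimes q+1})$, observes that each factor contributes $\A^{q+1}V_i$ at vanishing order one along its own big diagonal and nothing at order two or more, and then counts vanishing orders: reaching total order $2k+2$ along $\Delta^{q+1}\subset Y^{q+1}$ with only $2k+2$ factors forces order exactly one in each, which isolates $\A^{q+1}V_1\otimes\cdots\otimes\A^{q+1}V_{2k+2}$. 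You instead settle $q=1$ by hand --- the multiplicity-free decomposition of $S^2(V_1\otimes\cdots\otimes V_{2k+2})$ over even subsets $S$, the polarization criterion of \Cref{Lem:BottomEqnLem}, range \ref{Item:VanishingOrderFirstStep3}, and a derivative-budget count that exploits the parity of $|S|$ --- and then prolong via \Cref{Thm:Prolong} and \Cref{Lem:WedgeProlong}. Both arguments are sound; yours is more elementary and explicit in the quadric step (it needs only the pointwise vanishing condition on osculating spaces, not the full symbolic-power reformulation), whereas the paper's is uniform in $q$ and avoids \Cref{Lem:WedgeProlong} entirely. One dependency to keep in mind: the first equality of \Cref{Lem:WedgeProlong}, which your prolongation step invokes, is itself justified in the paper by the classical two-factor determinantal description of $I(\sigma_q(\mathbb PU\times\mathbb PW))_{q+1}$, so for $k=0$ your argument ultimately rests on that same classical input; this is not a gap, merely a point where your route and the lemma's proof share a common ancestor.
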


\begin{proof}
We exploit \Cref{Thm:BottomEqn} for which we may take $U=Y$ thanks to the homogeneity of $Y$. Then the K{\"u}nneth formula reads $H^0(\OO_Y(1)^{\boxtimes q+1})=H^0(\OO_{\mathbb PV_1}(1)^{\boxtimes q+1})\otimes\cdots\otimes H^0(\OO_{\mathbb PV_{2k+2}}(1)^{\boxtimes q+1})$, and it is rather simple to see that 
$$
H^0\left(\OO_{\mathbb PV_i}(1)^{\boxtimes q+1}\otimes\II_{\Delta^{q+1}_i/(\mathbb PV_i)^{q+1}}^{(\ell)}\right)=
\begin{cases}
    \A^{q+1}V_i & \textup{if }\ell=1 \\
    0 & \textup{if }\ell\geq 2,
\end{cases}
$$
where the $\Delta_i^{q+1}$ are the big diagonals of $(\mathbb PV_i)^{q+1}$, respectively. Now counting the vanishing order along the big diagonal $\Delta^{q+1}\subset Y^{q+1}$, we are done.
\end{proof}

\begin{Rmk}
The proof of \Cref{Cor:SegreBottomEquation} also deduces that no degree $q+1$ hypersurfaces pass through $\sigma_q\tau^kY$ for any Segre variety $Y$ with $\ell<2k+2$ factors.
\end{Rmk}

\subsection{Syzygies}

Let $S=\mathbb C[x_0,\ldots,x_r]$ be a polynomial ring in $r+1$ variables with the standard grading and $M$ be a finitely generated graded $S$-module. Consider its minimal graded free resolution
$$
\begin{tikzcd}
F_0 & \ar[l,"d_1",swap] F_1 & \ar[l,"d_2",swap] \cdots & \ar[l,"d_i",swap] F_i & \ar[l,"d_{i+1}",swap] \cdots    
\end{tikzcd}
$$
so that $\coker\,d_1\cong M$, and $\im d_i\subseteq(x_0,\ldots,x_r)\cdot F_{i-1}$ for all $i\geq 1$, and write
$$
F_i=\bigoplus_{j\in\mathbb Z}K_{i,j}(M)\otimes S(-i-j)
$$
by grouping the minimal homogeneous generators of each degree. Then the vector space $K_{i,j}(M)$ is regarded as the space of \emph{$i$-th syzygies of weight $j$} of $M$. It is isomorphic to the cohomology group of the Koszul type complex
$$
\begin{tikzcd}
    \A^{i+1}V\otimes M_{j-1} \ar[r] & \A^{i}V\otimes M_j \ar[r] & \A^{i-1}V\otimes M_{j+1}
\end{tikzcd}
$$
at the middle, where $V=S_1$ is the space of linear forms in $S$, and we call it the $(i,j)$-th \emph{Koszul cohomology group} of $M$. 

For our study, of special interest are syzygies of the minimal possible weight of $I(\sigma_q\tau^kX)$. Recall that $I(\sigma_q\tau^kX)_q=0$ by the nature of higher secant varieties.

\begin{Def}
Let $q\geq1$ and $k\geq0$ be integers. A \emph{bottom syzygy} of $\sigma_q\tau^kX\subseteq\mathbb PV$ is an element of the $K_{p,q+1}(I(\sigma_q\tau^kX))$, $p\geq 0$.
\end{Def}

As a fundamental example syzygies of higher secant varieties to Segre varieties have been well understood.

\begin{Thm}[Lascoux resolutions, {\cite{MR520233}} and {\cite[Theorem 6.1.4]{MR1988690}}]\label{Thm:Lascoux}
For a Segre variety $Y:=\mathbb PV_1\times\mathbb PV_2\subseteq\mathbb P(V_1\otimes V_2)$ with two factors and an integer $q\geq 1$, we have
$$
K_{p,mq+1}(I(\sigma_qY))=\bigoplus_{\lambda^1,\lambda^2}S^{\mu^1}V_1\otimes S^{\mu^2}V_2
$$
for all $p\geq 0$ and $m\geq1$, where $\lambda^1$ and $\lambda^2$ run over all partitions with $\max\{\lambda^1_1,\lambda^2_1\}\leq m$ and $|\lambda^1|+|\lambda^2|=p-m^2+1$, and $\mu^1$ and $\mu^2$ are ``block" Young diagrams of the forms
$$
\mu^1=\begin{pmatrix}
    \lambda^0 & {\lambda^2}' \\
    \lambda^1 &
\end{pmatrix}
\quad\text{and}\quad\mu^2=
\begin{pmatrix}
    \lambda^0 & {\lambda^1}' \\
    \lambda^2 &
\end{pmatrix}
$$
for the partition $\lambda^0:=(m,\ldots,m)\vdash (q+m)m$. The other Koszul cohomology groups all vanish.
\end{Thm}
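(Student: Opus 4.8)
The plan is to realize $\sigma_q Y$ as the classical generic determinantal variety of rank $\leq q$ tensors and to compute its minimal free resolution by Weyman's geometric technique, reading off the Koszul cohomology through Bott's theorem on a Grassmannian.

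First I would identify the affine cone $\widehat{\sigma_q Y}\subseteq V^\ast=V_1^\ast\otimes V_2^\ast$ with the locus of tensors of rank $\leq q$, whose ideal is generated by the $(q+1)$-minors; this is the two-factor instance of \Cref{Cor:SegreBottomEquation} combined with \Cref{Thm:Prolong}. The standard desingularization is the total space of the bundle $\mathcal R\otimes V_2^\ast$ over $G:=\Gr(q,V_1^\ast)$, namely
$$
Z=\{(R,\psi)\in\Gr(q,V_1^\ast)\times(V_1^\ast\otimes V_2^\ast):\psi\in R\otimes V_2^\ast\},
$$
where $0\to\mathcal R\to V_1^\ast\otimes\OO_G\to\mathcal Q\to 0$ is the tautological sequence. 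For a tensor of rank exactly $q$ the subspace $R\subseteq V_1^\ast$ spanned by its column space is unique, so $Z\to\widehat{\sigma_q Y}$ is birational; as generic determinantal varieties have rational singularities, the direct-image machinery applies without correction terms.

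Second, Weyman's technique expresses the terms of the minimal free resolution of $S/I(\sigma_q Y)$ over $S=\Sym(V_1\otimes V_2)$ via the cohomology of exterior powers of $\xi:=(\mathcal Q\otimes V_2^\ast)^\ast=\mathcal Q^\ast\otimes V_2$:
$$
F_i=\bigoplus_j H^j(G,\A^{i+j}\xi)\otimes_{\mathbb C}S(-i-j),
$$
so that after the shift $K_{i,j}(I)=K_{i+1,j-1}(S/I)$ between the ideal and the quotient ring one gets $K_{i,j}(I(\sigma_q Y))=H^{j-1}(G,\A^{i+j}\xi)$. Applying the Cauchy--Littlewood formula (\Cref{Thm:CL}) fiberwise yields
$$
\A^{n}\xi=\bigoplus_{\nu\vdash n}S^\nu\mathcal Q^\ast\otimes S^{\nu'}V_2,
$$
reducing everything to computing $H^\bullet(G,S^\nu\mathcal Q^\ast)$ and tensoring with the constant factor $S^{\nu'}V_2$.

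Third, I would invoke Bott's theorem on $G=\Gr(q,V_1^\ast)$: each $S^\nu\mathcal Q^\ast$ has cohomology concentrated in a single degree, equal to a Schur module $S^\kappa V_1$ obtained from $\nu$ by the Bott sorting procedure (add the staircase, discard on a repeat, otherwise sort and record the number of inversions as the cohomological degree). Splicing in the length-$q$ rectangle forced by $\mathcal R$ produces exactly the block shapes $\mu^1$ assembled from $\lambda^0=(m^{q+m})$, ${\lambda^2}'$, and $\lambda^1$, with admissibility of the diagram encoding $\max\{\lambda^1_1,\lambda^2_1\}\leq m$; the recorded cohomological degree turns out to be $mq$, and since a summand in cohomological degree $a$ coming from $\A^{p+j}\xi$ lands in $K_{p,j}(I)$ with $j-1=a$, this pins the weight to $j=mq+1$. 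Matching the $V_1$ and $V_2$ factors and tracking $|\lambda^1|+|\lambda^2|=p-m^2+1$ completes the decomposition, and the vanishing of all other Koszul groups is the statement that no other pair $(\nu,j)$ survives. Minimality is automatic, since consecutive $F_i$ share no weights. The main obstacle is precisely this Bott computation and its bookkeeping — verifying that the sorting, with the $\mathcal R$-rectangle inserted, produces exactly the advertised block diagrams in exactly the cohomological degree matching $mq+1$ and nothing else; this is where the ``strand'' parameter $m$ of the non-linear determinantal resolution arises and where all the representation-theoretic content is concentrated. Because this is classical, an acceptable shortcut is to cite \cite{MR520233} and \cite[Theorem 6.1.4]{MR1988690} directly and merely translate their indexing into the block-diagram form used here.
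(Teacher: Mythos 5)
The paper does not prove this statement: it is imported verbatim as a classical result, with the proof delegated to the cited sources (Lascoux, and Weyman's book, Theorem 6.1.4). Your sketch is precisely the Kempf--Lascoux--Weyman geometric technique used there --- desingularize the rank $\le q$ locus by the total space of $\mathcal R\otimes V_2^\ast$ over $\Gr(q,V_1^\ast)$, apply the basic theorem to $\xi=\mathcal Q^\ast\otimes V_2$, decompose $\A^{i+j}\xi$ by Cauchy--Littlewood, and run Bott's algorithm --- so in substance you are reconstructing the cited proof rather than offering an alternative, and the outline, including the index bookkeeping $K_{i,j}(I)=H^{j-1}(G,\A^{i+j}\xi)$ and the sanity check that the $m$-th strand sits in cohomological degree $mq$, is correct. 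Two local points deserve correction. First, ``minimality is automatic, since consecutive $F_i$ share no weights'' is false in general: for $q=1$ and $V_1,V_2$ of dimension $\ge 4$, the strands $m=1$ and $m=2$ produce $S(-6)$-summands in both $F_3$ and $F_4$ of the resolution of $I$ (e.g.\ $K_{4,2}\ne0$ and $K_{3,3}\ne0$ for the $2$-minors of a generic $4\times4$ matrix), so minimality must instead be extracted from the construction of the differentials in Weyman's basic theorem, where it is proved. Second, invoking rational singularities to justify $R^{>0}\pi_\ast\OO_Z=0$ is mildly circular in this framework, since that vanishing is itself established by the same Cauchy-plus-Bott computation applied to $S^\bullet(\mathcal R^\ast\otimes V_2)$; this is harmless but worth phrasing as an output rather than an input. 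The genuinely substantive step --- verifying that Bott's exchange with the inserted $q\times m$ rectangle yields exactly the block shapes $\mu^1,\mu^2$ in degree $mq$ and nothing else --- is, as you acknowledge, deferred to the references, which is exactly what the paper itself does.
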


Consequently, the bottom syzygies of $\sigma_qY$ for a $2$-factor Segre variety $Y$ form the space
$$
K_{p,q+1}(I(\sigma_qY))=\bigoplus_{a+b=p}S^{(a+1,1,\ldots,1)}V_1\otimes S^{(b+1,1,\ldots,1)}V_2
$$
for partitions $(a+1,1,\ldots,1)$ and $(b+1,1,\ldots,1)$ of $p+q+1$.

We often use Betti tables to encapsulate minimal free resolutions and Koszul cohomology groups. When we restrict ourselves to the case of $I(\tau Y)$ for $4$-factor Segre varieties $Y$, they are diagrams whose entries are $\beta_{i,j}:=\dim K_{i,j}(I(\tau Y))$. In the Betti table the column $i=0$ is controlled by Oeding-Raicu's result, whereas the row $j=2$ is determined by one of our main results:
$$
\begin{array}{c|ccccl}
    j\setminus i    & 0 & 1 & 2 & 3 & \cdots \\ \hline
    2               & \beta_{0,2} & \beta_{1,2} & \beta_{2,2} & \beta_{3,2} & \leftarrow\textup{\Cref{Thm:SegreDecompo}} \\
    3               & \beta_{0,3} & \beta_{1,3} & \beta_{2,3} & \beta_{3,3} & \multirow{2}{*}{$\cdots$} \\
    4               & \beta_{0,4} & \beta_{1,4} & \beta_{2,4} & \beta_{3,4} & \\
    5               & \beta_{0,5} & \beta_{1,5} & \beta_{2,5} & \beta_{3,5} & \\
    \vdots    & \uparrow & \multicolumn{2}{c}{\vdots} \\
                     & \cite{MR3240996} & \phantom{\cite{MR3240996}} & \phantom{\cite{MR3240996}} & \phantom{\cite{MR3240996}}
\end{array} 
$$

\subsection{Tensors of linear forms}

This subsection presents the concept of a \emph{tensor of linear forms}. Recall that an element of a given tensor product $V_1\otimes\cdots\otimes V_\ell$, namely an $\ell$-way tensor, is called simple if it is equal to $v_1\otimes\cdots\otimes v_\ell$ for some nonzero vectors $0\neq v_i\in V_i$, $1\leq i\leq\ell$. 

\begin{Def}\label{Def:XMulti}
Let $\ell\geq 2$ be an integer. An \emph{$\ell$-way tensor of linear forms} is a linear map 
$$
T:V_1\otimes\cdots\otimes V_\ell\to V
$$ 
from a tensor product $V_1\otimes\cdots\otimes V_\ell$ of $\ell$ vector spaces to the space $V$ of linear forms on $\mathbb PV$, and a \emph{matrix of linear forms} simply refers to the case $\ell=2$. For such a tensor $T$ we introduce the following terminology.
\begin{enumerate}
    \item $T$ is \emph{$1$-generic} (see Eisenbud \cite{MR944327}) if $T(v_1\otimes\cdots\otimes v_\ell)\neq 0$ for all $0\neq v_i\in V_i$, $1\leq i\leq\ell$.
    \item $T$ is \emph{$X$-simple} if $T^\ast(\widehat{z})\in V_1\otimes\cdots\otimes V_\ell$ is either simple or zero for a general point $\widehat{z}\in\widehat{X}\subseteq V^\ast$.
    \item $T$ is \emph{$X$-multiplicative} if it is both $1$-generic and $X$-simple.
\end{enumerate}
\end{Def}

Here, $1$-generic matrices of linear forms expand the discussion of generic matrices, that is, the matrices whose entries are pairwise distinct indeterminates.

\begin{Thm}[Part of {\cite[Theorem 2.1 and Corollary 2.2]{MR944327}}]\label{Thm:Eisenbud1Generic}
Let $M:U\otimes W\to V$ be a $1$-generic matrix of linear forms, and suppose that $1\leq e:=\dim W-\dim U+1\leq\dim\mathbb PV$. For the ideal $I_{\dim U}(M)$ generated by the maximal minors of $M$, the following hold.
\begin{enumerate}
    \item It cuts out a subscheme $Z\subset\mathbb PV$ of codimension $e$.
    \item It is prime as long as $\dim Z\geq 1$.
\end{enumerate}
Moreover, for each $1\leq q\leq\dim U-1$ the natural map $\A^{q+1}U\otimes\A^{q+1}W\to S^{q+1}V$ induces an injection $\A^{q+1}U\hookrightarrow S^{q+1}V$ when fixing $0\neq w_0\wedge\cdots\wedge w_q\in\A^{q+1}W$.
\end{Thm}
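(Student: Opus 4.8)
The plan is to reconstruct the two geometric ingredients behind this (essentially Eisenbud's) theorem—an incidence-variety dimension count for the codimension and a reduction to the generic determinantal variety for primeness—and then to extract the ``moreover'' injectivity from the perfection of determinantal ideals of maximal codimension. Throughout write $m=\dim U\leq\dim W=:n$ (so $e=n-m+1$), fix a basis $w_1,\ldots,w_n$ of $W$, and regard $M$ as the $m\times n$ matrix with entries $M(u_i\otimes w_j)\in V$. The single structural consequence of $1$-genericity that I use repeatedly is linear independence: for any fixed $0\neq u\in U$ the map $w\mapsto M(u\otimes w)$ is injective $W\hookrightarrow V$ (a kernel element would be a nonzero $w$ with $M(u\otimes w)=0$), so $\dim V\geq\dim W$ and the $n$ linear forms $M(u\otimes w_1),\ldots,M(u\otimes w_n)$ are linearly independent in $V$.

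For (1) I would form the incidence variety $\tilde Z=\{(z,[u])\in\mathbb PV\times\mathbb PU:M(u\otimes w)(z)=0\text{ for all }w\in W\}$, whose image under the first projection is exactly the locus $Z$ where the evaluated matrix has rank $<m$. Over a point $[u]$ the fiber is the common zero locus of the $n$ independent linear forms $M(u\otimes w_j)$, hence a linear $\mathbb P^{\dim V-1-n}$; this forces $\dim\tilde Z=(m-1)+(\dim V-1-n)$ and therefore $\codim_{\mathbb PV}Z\geq n-m+1=e$, since $\dim Z\leq\dim\tilde Z$. The reverse inequality $\codim Z\leq e$ is the classical height bound (Macaulay/Eagon--Northcott) on the ideal of maximal minors, valid over any Noetherian ring. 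Equality gives (1).

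For (2) the same fibration exhibits $\tilde Z$ as the projectivization of a vector subbundle of the trivial bundle over $\mathbb PU$, hence irreducible; and over the dense open locus of $Z$ where the rank is exactly $m-1$ the map $\tilde Z\to Z$ has a single point in each fiber (the left-kernel line), so it is birational and $Z$ is irreducible. (The rank $\leq m-2$ locus is parametrized by $\Gr(2,U)$ and the same count yields strictly larger codimension, so it is proper in $Z$.) For reducedness I would realize $M$ as a linear embedding $\iota:\mathbb PV\hookrightarrow\mathbb P(U^\ast\otimes W^\ast)$ and $Z=\iota^{-1}(\mathcal D)$ for the generic determinantal variety $\mathcal D$ of rank $<m$ matrices, whose ideal is classically prime; since by (1) the section has the expected codimension $e=\codim\mathcal D$ and $\mathcal D$ is Cohen--Macaulay, the section is unmixed, and together with the irreducibility and generic reducedness (the generic point of $Z$ lies on the smooth locus of $\mathcal D$, where the rank is $m-1$) it follows that $I_m(M)$ is prime. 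Verifying that this linear section remains reduced—and that $\iota$ is well defined, for which one may first replace $V$ by the span of the entries so that $V^\ast\to U^\ast\otimes W^\ast$ is injective—is the step I expect to be the most delicate; it is exactly the content of Eisenbud's theorem, which I would cite for this part.

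For the ``moreover'' injectivity I would fix linearly independent $w_0,\ldots,w_q$, restrict $M$ to $W'=\langle w_0,\ldots,w_q\rangle$ to obtain a still $1$-generic $M':U\otimes W'\to V$, and note that under $\A^{q+1}W'\cong\mathbb C$ the map becomes $\psi:\A^{q+1}U\to S^{q+1}V$ sending $u_{i_0}\wedge\cdots\wedge u_{i_q}$ to the corresponding $(q+1)$-minor of $M'$; thus $\im\psi=I_{q+1}(M')_{q+1}$ and injectivity is equivalent to the $\mathbb C$-linear independence of the $\binom{m}{q+1}$ maximal minors of $M'$. Since $q+1\leq m\leq\dim V$ one has $e'=m-q\leq\dim\mathbb PV$, so part (1) applies to $M'$ and $I_{q+1}(M')$ attains the maximal codimension $e'$; consequently $S/I_{q+1}(M')$ is resolved by the Eagon--Northcott complex, which is minimal because every entry is a linear form, and its first term contributes exactly $\binom{m}{q+1}$ generators all in degree $q+1$, giving $\dim I_{q+1}(M')_{q+1}=\binom{m}{q+1}$. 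As the maximal minors span this space and are $\binom{m}{q+1}$ in number, they are linearly independent and $\psi$ is injective. I would flag that a naive Laplace-expansion induction on the number of rows does \emph{not} close here—deleting a row yields a relation with linear-form coefficients among the smaller minors, which their $\mathbb C$-independence cannot kill—so routing the argument through the codimension and perfection of $I_{q+1}(M')$ is essential.
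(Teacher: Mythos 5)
The paper offers no proof of this statement: it is quoted directly from Eisenbud's article \cite{MR944327} (Theorem 2.1 and Corollary 2.2), so there is no in-paper argument to compare yours against. Your reconstruction follows what is essentially Eisenbud's own route --- the incidence correspondence over $\mathbb PU$ for the lower bound $\codim Z\geq e$, the Eagon--Northcott height bound for the upper bound, reduction to the generic determinantal variety for primeness, and perfection plus minimality of the Eagon--Northcott resolution of $I_{q+1}(M')$ for the linear independence of the maximal minors of the restricted matrix --- and the parts you carry out in full, namely (1) and the ``moreover'' clause (the clause the paper actually invokes, in the proof of \Cref{Thm:GreenLazarsfeld}), are correct. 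Deferring the generic reducedness of the linear section to Eisenbud is consistent with the paper's own treatment, and your identification of it as the genuinely delicate point is accurate.

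One local flaw worth fixing: the parenthetical claim that ``the same count'' over $\Gr(2,U)$ shows the rank-$\leq m-2$ locus is proper in $Z$ does not close as stated. With only the span bound you established (the $n$ entries $M(u\otimes w_j)$ are independent for each fixed $u$), the incidence variety over $\Gr(2,U)$ has dimension at most $2(m-2)+(\dim V-1-n)$, which is not smaller than $\dim\tilde Z=(m-1)+(\dim V-1-n)$ once $m\geq 3$; even invoking Eisenbud's sharper bound $\dim\langle\textup{entries}\rangle\geq a+b-1$ for a $1$-generic $a\times b$ matrix only rescues $m\leq 3$. The properness you need follows instead from what you already have: if the evaluated matrix had rank $\leq m-2$ at every point of $Z$, every fiber of $\tilde Z\to Z$ would be positive-dimensional, forcing $\dim Z<\dim\tilde Z$ and hence $\codim Z>e$, contradicting the Eagon--Northcott inequality. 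Substituting that argument repairs the only soft spot outside the step you explicitly cite.
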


The $X$-simplicity can be understood in an alternative way.

\begin{Rmk}\label{Rmk:XSimple}
An $\ell$-way tensor $T:V_1\otimes\cdots\otimes V_\ell\to V$ of linear forms is $X$-simple if and only if every ``flattening" $V_i\otimes(V_1\otimes\cdots\otimes V_{i-1}\otimes V_{i+1}\otimes\cdots\otimes V_\ell)\to V$ defines a matrix $M_i$ of linear forms such that $\rank M_i(\widehat{z})\leq 1$ for all $\widehat{z}\in\widehat{X}\subseteq V^\ast$.
\end{Rmk}

\begin{Prop}[Geometric structure of $X$-multiplicative tensors, cf.\ {\cite[Proposition 1.6]{MR944327} and \cite[Lemma 4.3]{choe2022determinantal}}]\label{Prop:StructureXmulti}
Let $T:V_1\otimes\cdots\otimes V_\ell\to V$ be an $X$-multiplicative $\ell$-way tensor of linear forms. Consider a morphism $\phi:U\to X$ of a smooth variety $U$ and the pullback line bundle $L=\phi^\ast\OO_X(1)$ on $U$. Then we may regard the $V_i$ as linear systems $V_i\subseteq H^0(U,L_i)$ for some line bundles $L_i$ on $U$ having the following properties.
\begin{enumerate}
    \item Each $|V_i|$ has no fixed components.
    \item $L_1\otimes\cdots\otimes L_\ell=L(-F)$ for the fixed divisorial part $F\subset U$ of the linear system $|\im T|\subseteq|L|$.
    \item $T$ comes from the multiplication of global sections, that is, the diagram below commutes.
    $$
\begin{tikzcd}
    V_1\otimes\cdots\otimes V_\ell \ar[rr,"T"] \ar[d,phantom, "\rotatebox{-90}{$\subseteq$}" description] & & V \ar[d,phantom, "\rotatebox{-90}{$\subseteq$}" description] \\
    H^0(U,L_1)\otimes\cdots\otimes H^0(U,L_\ell) \ar[r,"\cdot",swap] & H^0(U,L(-F)) \ar[r,"\cdot F",swap] & H^0(U,L)
\end{tikzcd}
$$
\end{enumerate}
\end{Prop}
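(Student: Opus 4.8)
The plan is to extract from the $X$-simplicity of $T$ a factorization of $T^\ast$ into rank-one tensors, to read off $\ell$ rational maps $X\dashrightarrow\mathbb PV_i$, to pull them back along $\phi$ and interpret each as the linear system spanned by $V_i$ in the sections of a line bundle $L_i$, and finally to recover the multiplicative structure by a direct evaluation of $T$. Concretely, consider the rational map $[T^\ast]:X\dashrightarrow\mathbb P(V_1^\ast\otimes\cdots\otimes V_\ell^\ast)$, $z\mapsto[T^\ast(\widehat z)]$. Since $T$ is $1$-generic its image is a nonzero subspace of $V$, so the forms in $\im T$ do not all vanish at a general point of $X$ and hence $T^\ast(\widehat z)\neq0$ for general $\widehat z\in\widehat X$, making $[T^\ast]$ well defined on a dense open set. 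By $X$-simplicity the image of $[T^\ast]$ lies in the image of the Segre embedding $\mathbb PV_1\times\cdots\times\mathbb PV_\ell\hookrightarrow\mathbb P(V_1^\ast\otimes\cdots\otimes V_\ell^\ast)$, a closed subvariety isomorphic to the product. Composing $[T^\ast]$ with the inverse of this isomorphism onto the Segre variety $\Sigma$ and projecting to the factors produces rational maps $\psi_i:X\dashrightarrow\mathbb PV_i$, determined by writing $T^\ast(\widehat z)=\xi_1(\widehat z)\otimes\cdots\otimes\xi_\ell(\widehat z)$ with $[\xi_i(\widehat z)]=\psi_i(z)$; the factors $\xi_i(\widehat z)\in V_i^\ast$ are unique up to scaling.

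Next I would precompose with $\phi$ to obtain rational maps $\psi_i\circ\phi:U\dashrightarrow\mathbb PV_i$. Because $U$ is smooth, each of these is resolved by a line bundle $L_i$ together with a linear series having no fixed divisorial component: pulling back $\OO_{\mathbb PV_i}(1)$, whose space of sections is canonically $V_i$, and dividing out the fixed part defines $L_i$ and a linear map $V_i\to H^0(U,L_i)$. This map is injective, for if a nonzero $v_i\in V_i$ mapped to the zero section, then $\langle\xi_i(\widehat z),v_i\rangle$ would vanish identically and the evaluation identity below would force $T(v_1\otimes\cdots\otimes v_\ell)$ to vanish on $X$ for every choice of the remaining $v_j$, hence to be zero in $V$ by nondegeneracy of $X$, contradicting $1$-genericity. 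Thus I may regard $V_i\subseteq H^0(U,L_i)$ as a linear system, which by construction has no fixed components; this is property (1).

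The remaining properties follow from the evaluation identity
$$
\langle\widehat z,T(v_1\otimes\cdots\otimes v_\ell)\rangle=\langle T^\ast\widehat z,v_1\otimes\cdots\otimes v_\ell\rangle=\prod_{i=1}^\ell\langle\xi_i(\widehat z),v_i\rangle,
$$
valid for general $\widehat z\in\widehat X$. Interpreting $\langle\xi_i(\widehat z),v_i\rangle$ as the value at the corresponding point of $U$ of the section $v_i\in H^0(U,L_i)$, this identity says that the pullback of $T(v_1\otimes\cdots\otimes v_\ell)$, a section of $L=\phi^\ast\OO_X(1)$, equals the product $v_1\cdots v_\ell\in H^0(U,L_1\otimes\cdots\otimes L_\ell)$ times a common factor independent of the $v_i$. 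That factor is a section vanishing exactly along the fixed divisorial part $F$ of $|\im T|$, so multiplication by it identifies $H^0(U,L_1\otimes\cdots\otimes L_\ell)$ with the subspace of $H^0(U,L)$ of sections vanishing on $F$; this gives $L_1\otimes\cdots\otimes L_\ell=L(-F)$, which is property (2), and exhibits the factorization of $T$ through the multiplication of sections, which is property (3).

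I expect property (2) to be the main obstacle. The factorization $T^\ast(\widehat z)=\xi_1(\widehat z)\otimes\cdots\otimes\xi_\ell(\widehat z)$ fixes each $\xi_i$ only up to a scalar, and the delicate point is to organize these scalar ambiguities into a single globally defined section that cuts out precisely $F$, while checking that no moving part is concealed in the common factor --- that is, that the fixed divisor of $|\im T|$ accounts for the entire discrepancy between $L$ and $L_1\otimes\cdots\otimes L_\ell$. The smoothness of $U$ makes the passage between Weil and Cartier divisors and the attendant line-bundle bookkeeping routine, but carrying out that bookkeeping consistently is the heart of the argument.
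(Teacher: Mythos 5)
Your proposal follows essentially the same route as the paper: $X$-simplicity yields the rank-one factorization of $T^\ast(\widehat z)$ and hence rational maps to the $\mathbb PV_i$ (the paper writes these in coordinates via the slices $T_{j_1,\ldots,j_\ell}$ of the tensor), smoothness of $U$ resolves them into movable linear systems $V_i\subseteq H^0(U,L_i)$, and the evaluation identity gives the multiplicative factorization of $T$. The one step you flag as delicate is settled in the paper by observing that the rational sections $T_{j_1,\ldots,j_\ell}/(s_{1,j_1}\cdots s_{\ell,j_\ell})$ of $L\otimes L_1^{-1}\otimes\cdots\otimes L_\ell^{-1}$ all coincide in the function field while their denominators, as the indices vary, cut out a locus with no divisorial component (each $|V_i|$ being movable), so smoothness of $U$ forces the common factor to be a genuine global section $t$ whose divisor is exactly $F$.
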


\begin{proof}
    Fix bases of $V_1,\ldots,V_\ell$ so that in the natural way the tensor $T$ corresponds to a collection of linear forms $T_{j_1,\ldots,j_\ell}\in V\subseteq H^0(L)$, $0\leq j_i\leq r_i:=\dim V_i-1$. Define rational maps
    $$
    \phi_i:U\dashrightarrow\mathbb P^{r_i},\quad z\longmapsto(T_{j_1,\ldots,j_{i-1},0,j_{i+1},\ldots,j_\ell}(z):\cdots:T_{j_1,\ldots,j_{i-1},r_i,j_{i+1},\ldots,j_\ell}(z))
    $$
    for $j_1,\ldots,j_{i-1},j_{i+1},\ldots,j_\ell$ varying. They are well defined, for $T$ is $X$-simple (see \Cref{Rmk:XSimple}). Since $U$ is smooth, there are line bundles $L_1,\ldots,L_\ell$ on $U$ such that $\phi_i=(s_{i,0}:\cdots:s_{i,r_i})$ for some movable linear systems $|s_{i,0},\ldots,s_{i,r_i}|\subseteq |L_i|$. Now consider the local sections 
    $$
    \frac{T_{j_1,\ldots,j_\ell}}{s_{1,j_1}\cdots s_{\ell,j_\ell}},\quad 0\leq j_i\leq r_i,
    $$ 
    of $L\otimes L_1^{-1}\otimes\cdots\otimes L_\ell^{-1}$. They coincide in the function field of $U$, and the set of the denominators $s_{i,j_1}\cdots s_{\ell,j_\ell}$ cuts out a subscheme of $U$ with no divisorial parts since so do the $|s_{i,0},\ldots,s_{i,r_i}|$, $1\leq i\leq\ell$. Hence, the smoothness of $U$ yields a nonzero global section $t\in H^0(L\otimes L_1^{-1}\otimes\cdots\otimes L_\ell^{-1})$ fitting into
    $$
    T_{j_1,\ldots,j_\ell}=s_{1,j_1}\cdots s_{\ell,j_\ell}t
    $$
    for all $0\leq j_i\leq r_i$. By identifying $V_i=\langle s_{i,0},\ldots,s_{i,r_i}\rangle$ this expression shows all of the assertions.
\end{proof}

\subsection{Symmetric powers of smooth curves}

Throughout this subsection we assume $C$ to be a smooth projective curve of genus $g$ and discuss line bundles and divisors on the $(q+1)$-th \emph{symmetric power} $C_{q+1}$ of $C$, that is, the quotient 
$$
C_{q+1}=C^{q+1}/\mathfrak S_{q+1}
$$
of the $(q+1)$-th Cartesian power $C^{q+1}$ of $C$ by the factor-permuting action of $\mathfrak S_{q+1}$ on $C^{q+1}$.
It is in fact the Hilbert scheme of effective divisors $\xi$ of degree $q+1$ on $C$, being a smooth projective variety of dimension $q+1$. 

We list notions and facts about $C_{q+1}$. Let $B\in\Pic(C)$ stand for a line bundle on $C$.

\begin{enumerate}
\item Write $B^{\boxtimes q+1}=B\boxtimes\cdots\boxtimes B$ for the $(q+1)$-th box power of $B$ on $C^{q+1}$.
\item We denote by $S_{q+1,B}$ the line bundle on $C_{q+1}$ to which $B^{\boxtimes q+1}$ descends with respect to the canonical $\mathfrak S_{q+1}$-linearization. For a point $z\in C$ the effective divisor $X_{q+1}^z:=\{\xi\in C_{q+1}:\xi\ni z\}$ lies in $|S_{q+1,\OO_C(z)}|$.
\item Let $a_{q+1}:C\times C_q\to C_{q+1}$ be the addition map $(z,\xi)\mapsto z+\xi$. It acts as the universal family of degree $q+1$ effective divisors on $C$.
\item The pushforward $E_{q+1,B}:=a_{q+1,\ast}(B\boxtimes\OO_{C_q})$ is a vector bundle of rank $q+1$ on $C_{q+1}$ whose fiber over each $\xi\in C_{q+1}$ is $H^0(B|_\xi)$.
\item Put $N_{q+1,B}=\det E_{q+1,B}$. Its global sections are $H^0(N_{q+1,B})=\A^{q+1}H^0(B)$, and furthermore, $H^i(N_{q+1,B})=\A^{q+1-i}H^0(B)\otimes S^iH^1(B)$ for all $0\leq i\leq q+1$. 
\item Nonreduced effective divisors of degree $q+1$ on $C$ form an effective divisor on $C_{q+1}$, namely the diagonal divisor $\Delta_{q+1}\subset C_{q+1}$.
\item There is a divisor $\delta_{q+1}$ on $C_{q+1}$ such that $2\delta_{q+1}$ is linearly equivalent to $\Delta_{q+1}$. Then $N_{q+1,B}\cong S_{q+1,B}(-\delta_{q+1})$.
\item We have the natural isomorphism $\Delta_2\cong C$ under which $\OO_{C_2}(-\delta_2)|_{\Delta_2}\cong\omega_C$.
\item The product $C_{q+1}\times C^i$ has an effective divisor 
$$
D_{q+1}^i:=\{(\xi,z_1,\ldots,z_i):\xi\ni z_j\textup{ for some }1\leq j\leq i\}.
$$
Its fibers are given as
\begin{align*}
(D_{q+1}^i)_\xi & =\xi\times C^{i-1}+\cdots+C^{i-1}\times\xi\in|\OO_C(\xi)^{\boxtimes i}|\quad\textup{and} \\
(D_{q+1}^i)_{(z_1,\ldots,z_i)} & =X_{q+1}^{z_1}+\cdots+X_{q+1}^{z_i}\in|S_{q+1,\OO_C(z_1+\cdots+z_i)}|.
\end{align*}
Also, $C^i$ has a similar effective divisor 
$$
D^{i-1,1}:=\{(z_1,\ldots,z_{i-1},z_i)\in C^i:z_j=z_i\textup{ for some }1\leq j\leq i-1\}.
$$
\end{enumerate}

For these matters see \cite{MR151460}, \cite{MR770932}, \cite{MR1158344}, \cite{MR1149124}, and  \cite{MR4160876}.

\begin{Prop}
For an integer $q\geq 0$ if a line bundle $B$ on $C$ is $q$-very ample, then $E_{q+1,B}$, hence $N_{q+1,B}$, is globally generated.
\end{Prop}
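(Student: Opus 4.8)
The plan is to unwind the definition of $E_{q+1,B}$ as a pushforward and to recognize that its global generation is essentially a restatement of $q$-very ampleness. First I would compute the space of global sections. Since $a_{q+1}\colon C\times C_q\to C_{q+1}$ is finite, taking global sections commutes with $a_{q+1,\ast}$, and the K\"unneth formula gives
$$
H^0(C_{q+1},E_{q+1,B})=H^0\bigl(C\times C_q,B\boxtimes\OO_{C_q}\bigr)=H^0(C,B)\otimes H^0(C_q,\OO_{C_q})=H^0(C,B),
$$
using that $C_q$ is projective and connected.

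Next I would identify the fibrewise evaluation map. The addition map $a_{q+1}$ is finite and flat of degree $q+1$: under the isomorphism $C\times C_q\cong\{(z,\xi):z\in\xi\}\subset C\times C_{q+1}$ sending $(z,\eta)\mapsto(z,z+\eta)$, it is identified with the universal degree-$(q+1)$ divisor projecting to $C_{q+1}$. Since $B\boxtimes\OO_{C_q}$ is flat over $C_{q+1}$, the sheaf $E_{q+1,B}$ is locally free of rank $q+1$ and its formation commutes with base change; concretely the scheme-theoretic fibre over $\xi$ is the length-$(q+1)$ subscheme $\xi\subset C$, and the base-change isomorphism reads $E_{q+1,B}\otimes k(\xi)\cong H^0(\xi,B|_\xi)$. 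Under these identifications the fibre of the evaluation map $H^0(C_{q+1},E_{q+1,B})\otimes k(\xi)\to E_{q+1,B}\otimes k(\xi)$ becomes exactly the restriction map $H^0(C,B)\to H^0(\xi,B|_\xi)$; this compatibility is the functoriality of pushforward combined with the base-change square, and pinning it down is the one point that genuinely requires care. Because $C$ is a smooth curve, the points of $C_{q+1}$ are in bijection with \emph{all} length-$(q+1)$ subschemes of $C$, so letting $\xi$ range over $C_{q+1}$ is the same as ranging over all such subschemes.

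With this in hand the argument closes formally. By Nakayama's lemma $E_{q+1,B}$ is globally generated if and only if each fibre map $H^0(C,B)\to E_{q+1,B}\otimes k(\xi)$ is surjective, which by the previous step is the surjectivity of $H^0(C,B)\to H^0(\xi,B|_\xi)$ for every length-$(q+1)$ subscheme $\xi$ --- i.e.\ precisely the definition of $q$-very ampleness. For $N_{q+1,B}=\det E_{q+1,B}$ the conclusion is then purely formal: a global-generation surjection $\OO_{C_{q+1}}^{\oplus N}\twoheadrightarrow E_{q+1,B}$ induces, after taking top exterior powers, a surjection $\A^{q+1}\bigl(\OO_{C_{q+1}}^{\oplus N}\bigr)=\OO_{C_{q+1}}^{\oplus\binom{N}{q+1}}\twoheadrightarrow\A^{q+1}E_{q+1,B}=N_{q+1,B}$. (Equivalently, one checks directly that $\A^{q+1}H^0(B)\to\A^{q+1}H^0(\xi,B|_\xi)=\det H^0(\xi,B|_\xi)$ is onto whenever the restriction is, recalling $H^0(N_{q+1,B})=\A^{q+1}H^0(B)$.) Thus the main obstacle is conceptual rather than computational: it lies entirely in the base-change identification of the fibre evaluation with the restriction map, after which the equivalence with $q$-very ampleness is tautological.
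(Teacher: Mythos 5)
Your proposal is correct and follows essentially the same route as the paper: the paper's proof simply invokes the identifications $H^0(E_{q+1,B})=H^0(B)$ and $\operatorname{ev}_\xi=(H^0(B)\to H^0(B|_\xi))$ (listed among the standard facts on symmetric powers) and concludes by the definition of $q$-very ampleness. Your additional care with the Künneth computation, the base-change identification, and the formal deduction for $N_{q+1,B}=\det E_{q+1,B}$ only fills in details the paper leaves implicit.
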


\begin{proof}
Consider the evaluation map
$$
\operatorname{ev}:H^0(E_{q+1,B})\otimes\OO_{C_{q+1}}\to E_{q+1,B}.
$$
Notice that $H^0(E_{q+1,B})=H^0(B)$ and that the fiber of $\operatorname{ev}$ over each $\xi\in C_{q+1}$ is
$$
\operatorname{ev}_\xi:H^0(B)\to H^0(B|_\xi).
$$
Hence, from the $q$-very ampleness of $B$ if follows that $E_{q+1,B}$ is globally generated.
\end{proof}

Let $B\in\Pic(C)$ be a line bundle on $C$ and $q\geq0$ be an integer. Suppose that $B$ is $q$-very ample so that $E_{q+1,B}$ is globally generated. Let $M_{q+1,B}$ be the kernel bundle in the short exact sequence
$$
\begin{tikzcd}
0 \ar[r] & M_{q+1,B} \ar[r] & H^0(B)\otimes\OO_{C_{q+1}} \ar[r] & E_{q+1,B} \ar[r] & 0.
\end{tikzcd}
$$
Taking $\A^{q+1}$ we have a locally free resolution
\begin{equation}\label{Eqn:NResolution}
\begin{tikzcd}
F_{q+1}(B)_i:=S^iM_{q+1,B}\otimes\A^{q+1-i}H^0(B) \ar[r] & \cdots \ar[r] & F_{q+1}(B)_1 \ar[r] & F_{q+1}(B)_0
\end{tikzcd}    
\end{equation}
of $\det E_{q+1,B}=N_{q+1,B}$.

For the proof of \Cref{Thm:EKS} we need to compute some sheaf cohomology groups related to the resolutions $F_{q+1}(B)_\ast$ and an $(i-1)$-very ample line bundle $L_1$ on $C$ for an integer $i\geq 1$, which employs the pullback
$$
Q^i_{L_1}:=\pi^\ast_iM_{i,L_1}
$$ 
via the quotient map $\pi_i:C^i\to C_i$. Notice that $Q^1_{L_1}$ is the well-known kernel bundle $M_{L_1}:=\ker(H^0(L_1)\otimes\OO_C\to L_1)$ and that the short exact sequence
\begin{equation}\label{Eqn:RathmannSES}
\begin{tikzcd}
    0 \ar[r] & Q^i_{L_1} \ar[r] & Q^{i-1}_{L_1}\boxtimes\OO_C \ar[r] & \OO_{C^{i-1}}\boxtimes L_1(-D^{i-1,1}) \ar[r] & 0
\end{tikzcd}    
\end{equation}
\cite[p.\ 3]{rathmann2016effective} holds. For convenience in the following a line bundle $B$ on $C$ is called \emph{Clifford} if
$$
B\in
\begin{cases}
    \emptyset & \textup{if $g=0$} \\
    \{\OO_C,\omega_C\} & \textup{if $C$ is not hyperelliptic with $g\geq 1$} \\
    \{G^m:0\leq m\leq g-1\} & \textup{if $C$ is hyperelliptic with $|G|=g^1_2$}.
\end{cases}
$$

\begin{Prop}\label{Prop:VanishingLB}
Let $i\geq1$ and $q\geq 1$ be integers, and consider line bundles $L_1$ and $B_1$ on $C$ such that $L_1$ is $(i-1)$-very ample, $\deg B_1\geq 2g+q$, and
\begin{equation}\label{Cond:NotClifford}
\deg L_1+\deg B_1\geq 4g+2q\quad\text{with equality only if}\quad B_1\otimes L_1^{-1}\text{ is not Clifford.}    
\end{equation}
Then for all $B_2,\ldots,B_i\in\Pic(C)$ we obtain the vanishing
$$
H^i(C^i,\A^{q+j+1}Q^i_{L_1}\otimes B_1(\xi)\boxtimes B_2(\xi)\boxtimes\cdots\boxtimes B_i(\xi))=0
$$
provided that an effective divisor $\xi\in C_j$ of degree $j\geq0$ satisfies 
\begin{equation}\label{Cond:ImposeIndependent}
h^0(C,L_1\otimes B_1^{-1}\otimes\omega_C(-\xi))=\max\{h^0(C,L_1\otimes B_1^{-1}\otimes\omega_C)-j,0\}.    
\end{equation}
\end{Prop}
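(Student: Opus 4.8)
The plan is to induct on $i$, using the short exact sequence \eqref{Eqn:RathmannSES} as the engine that lowers $i$ by one. Write $\mathcal B:=B_1(\xi)\boxtimes\cdots\boxtimes B_i(\xi)$ for the twisting bundle on $C^i$. Since the quotient in \eqref{Eqn:RathmannSES} is a line bundle, applying $\A^{q+j+1}$ produces a two-step filtration, and after tensoring with $\mathcal B$ I obtain the short exact sequence
$$
0\to\A^{q+j+1}Q^i_{L_1}\otimes\mathcal B\to\A^{q+j+1}(Q^{i-1}_{L_1}\boxtimes\OO_C)\otimes\mathcal B\to\A^{q+j}Q^i_{L_1}\otimes(\OO_{C^{i-1}}\boxtimes L_1(-D^{i-1,1}))\otimes\mathcal B\to0.
$$
From the associated long exact sequence, the desired vanishing $H^i(C^i,\A^{q+j+1}Q^i_{L_1}\otimes\mathcal B)=0$ follows once I show that the middle term has vanishing $H^i$ and the right-hand term has vanishing $H^{i-1}$.

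For the base case $i=1$ we have $Q^1_{L_1}=M_{L_1}$ and $D^{0,1}=\emptyset$, so the claim collapses to the curve vanishing $H^1(C,\A^{q+j+1}M_{L_1}\otimes B_1(\xi))=0$. I would deduce this from Serre duality together with the self-duality $\A^aM_{L_1}^\vee\cong\A^{\,h^0(L_1)-1-a}M_{L_1}\otimes L_1$ of the kernel bundle (which uses $\det M_{L_1}=L_1^{-1}$): the dual statement becomes the absence of sections of an explicit kernel-bundle twist, and the twisting-down by $-\xi$ produces precisely the line bundle $L_1\otimes B_1^{-1}\otimes\omega_C(-\xi)$ appearing in \eqref{Cond:ImposeIndependent}. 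Thus the hypotheses $\deg B_1\geq2g+q$, the sharp bound \eqref{Cond:NotClifford}, and the independence condition \eqref{Cond:ImposeIndependent} are exactly what a Rathmann-type vanishing \cite{rathmann2016effective} (cf.\ \cite{MR4160876}) requires, and they close the base case.

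For the middle term, observe that $Q^{i-1}_{L_1}\boxtimes\OO_C$ is pulled back from $C^{i-1}$, whence $\A^{q+j+1}(Q^{i-1}_{L_1}\boxtimes\OO_C)\otimes\mathcal B=\big(\A^{q+j+1}Q^{i-1}_{L_1}\otimes B_1(\xi)\boxtimes\cdots\boxtimes B_{i-1}(\xi)\big)\boxtimes B_i(\xi)$. By Künneth, the contributions to $H^i$ occur only in bidegrees $(i,0)$ and $(i-1,1)$; the former vanishes because $\dim C^{i-1}=i-1$, and the latter equals $H^{i-1}(C^{i-1},\A^{q+j+1}Q^{i-1}_{L_1}\otimes B_1(\xi)\boxtimes\cdots\boxtimes B_{i-1}(\xi))\otimes H^1(C,B_i(\xi))$. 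The first factor vanishes by the inductive hypothesis, whose hypotheses persist under $i\mapsto i-1$: the divisor $\xi$ and the conditions \eqref{Cond:NotClifford}, \eqref{Cond:ImposeIndependent} do not involve $i$, and $(i-1)$-very ampleness of $L_1$ implies $(i-2)$-very ampleness. Hence $H^i$ of the middle term vanishes with no condition on $B_i$.

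The main obstacle is the third term $H^{i-1}(C^i,\A^{q+j}Q^i_{L_1}\otimes(\OO_{C^{i-1}}\boxtimes L_1(-D^{i-1,1}))\otimes\mathcal B)$, where the incidence twist $\OO(-D^{i-1,1})$ blocks a direct Künneth splitting. I would project along $\rho\colon C^i\to C^{i-1}$ forgetting the last coordinate, whose fibers are curves, and analyze $R^\bullet\rho_\ast$ of this term via Leray: over $(z_1,\ldots,z_{i-1})$ the twist restricts to $\OO_C(-z_1-\cdots-z_{i-1})$ and $Q^i_{L_1}$ restricts to the pullback of $M_{i,L_1}$ under the addition map, so the fiber cohomology lives on a curve and $R^0\rho_\ast$, $R^1\rho_\ast$ are bundles on $C^{i-1}$ assembled from $Q^{i-1}_{L_1}$ and the $B_k(\xi)$, twisted by the incidence points. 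The crux is to identify these pushforwards precisely — recognizing $\A^{q+j-1}Q^{i-1}_{L_1}$-type bundles by one more application of the Rathmann relation — and then to feed them into the case $i-1$ so that both Leray contributions $H^{i-1}(C^{i-1},R^0\rho_\ast)$ and $H^{i-2}(C^{i-1},R^1\rho_\ast)$ vanish. I expect the real difficulty to be the degree bookkeeping after the incidence shift, namely checking that $\deg B_1\geq2g+q$ and the sharp bound \eqref{Cond:NotClifford} survive, and that the independence condition \eqref{Cond:ImposeIndependent} propagates to the shifted divisor; it is precisely the calibration of these hypotheses that must be arranged to make the induction close.
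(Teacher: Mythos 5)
Your overall strategy (induction on $i$ driven by the sequence \eqref{Eqn:RathmannSES}, with a Serre-duality/Green-type argument for $i=1$) is the same as the paper's, and your base case is essentially right in outline, though you defer the actual numerics (Green's vanishing theorem plus Riemann--Roch plus the sharp case of Clifford's theorem, which is exactly where the hypotheses $\deg B_1\geq 2g+q$, \eqref{Cond:NotClifford}, and \eqref{Cond:ImposeIndependent} are consumed). The genuine gap is in the inductive step, and it stems from how you slice the exterior power of \eqref{Eqn:RathmannSES}. You apply $\A^{q+j+1}$, which places the target bundle $\A^{q+j+1}Q^i_{L_1}\otimes\mathcal B$ as the \emph{sub}object; you are then forced to prove $H^{i-1}$ of the quotient term $\A^{q+j}Q^i_{L_1}\otimes(\OO_{C^{i-1}}\boxtimes L_1(-D^{i-1,1}))\otimes\mathcal B$. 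That term still lives on $C^i$, still involves $Q^i_{L_1}$ (now with the exterior power \emph{lowered} by one, which runs against the shape of the inductive statement), sits in cohomological degree $i-1$ rather than the top degree $i$, and carries the incidence twist. It is not an instance of the inductive hypothesis, and your plan for it --- push forward along $\rho:C^i\to C^{i-1}$ and ``identify the pushforwards by one more application of the Rathmann relation'' --- is exactly the part you leave unresolved. As written, the induction does not close.

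The paper sidesteps this entirely by applying $\A^{q+j+2}$ instead, so that the target bundle appears as the \emph{quotient}: then the connecting map lands in $H^{i+1}$ of the sub, which vanishes for dimension reasons on $C^i$, and the only thing to kill is $H^i$ of the middle term $\bigl(\A^{q+j+2}Q^{i-1}_{L_1}\otimes B_1(\xi)\boxtimes\cdots\boxtimes B_{i-1}(\xi)\bigr)\boxtimes\bigl(B_i(\xi)\otimes L_1^{-1}\bigr)(D^{i-1,1})$. This is handled by the Leray spectral sequence for $\pr_2:C^{i-1}\times C\to C$: it suffices that $R^{i-1}\pr_{2,\ast}$ have finite support, which by cohomology and base change reduces to the vanishing of $H^{i-1}\bigl(\A^{q+j+2}Q^{i-1}_{L_1}\otimes B_1(\xi+z)\boxtimes\cdots\boxtimes B_{i-1}(\xi+z)\bigr)$ for \emph{general} $z\in C$ --- precisely the inductive hypothesis with $(i-1,\,j+1,\,\xi+z)$, since $q+j+2=q+(j+1)+1$ and condition \eqref{Cond:ImposeIndependent} propagates from $\xi$ to $\xi+z$ for general $z$. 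This is the reason the proposition is stated with the auxiliary divisor $\xi$ and the parameter $j$ in the first place; in your version that mechanism never gets used in the inductive step, which is a sign the slicing is the wrong way around. If you redo your step with the exterior power raised by one, your Künneth computation for the middle term is replaced by this Leray argument and the proof closes.
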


\begin{proof}
We use induction on $i\geq1$. For the initial stage $i=1$ we proceed as in the proofs of \cite[Corollary (4.e.4)]{MR739785} and \cite[Theorem 2]{MR944326}. To the contrary we assume that $H^1(\A^{q+j+1}Q^1_{L_1}\otimes B_1(\xi))\neq0$. 
Then by Serre duality $H^0(\A^{h^0(L_1)-q-j-2}Q^1_{L_1}\otimes L_1\otimes B_1^{-1}\otimes\omega_C(-\xi))=H^1(\A^{q+j+1}Q^1_{L_1}\otimes B_1(\xi))^\ast$ is nonzero, and so the well-known vanishing theorem \cite[Theorem (3.a.1)]{MR739785} due to Green gives
$$
0\leq h^0(L_1)-q-j-2\leq h^0(L_1\otimes B_1^{-1}\otimes\omega_C(-\xi))-1.
$$
If $h^0(L_1\otimes B_1^{-1}\otimes\omega_C)\leq j$, then Condition \eqref{Cond:ImposeIndependent} deduces $h^0(L_1\otimes B_1^{-1}\otimes\omega_C(-\xi))=0$, a contradiction. Thus, we have $h^0(L_1\otimes B_1^{-1}\otimes\omega_C(-\xi))=h^0(L_1\otimes B_1^{-1}\otimes\omega_C)-j\geq 1$ and so
$$
h^0(L_1)-q-1\leq h^0(L_1\otimes B_1^{-1}\otimes\omega_C)
$$
together with $H^0(L_1\otimes B_1^{-1}\otimes\omega_C)\neq 0$. If $H^1(L_1\otimes B_1^{-1}\otimes\omega_C)=0$, then
\begin{align*}
    h^0(L_1\otimes B_1^{-1}\otimes\omega_C) & =(\deg L_1-\deg B_1+2g-2)-g+1 \\
    & =(\deg L_1-g+1)-\deg B_1+2g-2 \\
    & \leq h^0(L_1)-\deg B_1+2g-2.
\end{align*}
In this case we reach $\deg B_1\leq 2g+q-1$, a contradiction. As a result, Clifford's theorem on special divisors says that
\begin{align*}
\deg L_1-\deg B_1+2g-2 & \geq 2(h^0(L_1\otimes B_1^{-1}\otimes\omega_C)-1) \\
& \geq 2(h^0(L_1)-q-2) \\
& \geq 2((\deg L_1-g+1)-q-2) \\
& =2\deg L-2g-2q-2.
\end{align*}
So we obtain 
$$
\deg L_1+\deg B_1\leq 4g+2q,
$$
a contraction by Condition \eqref{Cond:NotClifford} and the sharp case of Clifford's theorem. We have finished the case $i=1$.

Now we suppose the induction hypothesis assigned to an integer $i-1\geq 1$. Applying $\A^{q+j+1}$ to \eqref{Eqn:RathmannSES} we have
$$
\begin{tikzcd}
    0 \ar[r] & \A^{q+j+2}Q^i_{L_1} \ar[r] & \A^{q+j+2}Q^{i-1}_{L_1}\boxtimes\OO_C \ar[r] & \A^{q+j+1}Q^i_{L_1}\otimes(\OO_{C^{i-1}}\boxtimes L_1)(-D^{i-1,1}) \ar[r] & 0.
\end{tikzcd}
$$
By tensoring it with $B_1(\xi)\boxtimes\cdots\boxtimes B_{i-1}(\xi)\boxtimes(B_i(\xi)\otimes L_1^{-1})(D^{i-1,1})$, it is enough to show that
$$
H^i\left(\left(\A^{q+j+2}Q^{i-1}_{L_1}\otimes B_1(\xi)\boxtimes\cdots\boxtimes B_{i-1}(\xi)\right)\boxtimes\left(B_i(\xi)\otimes L_1^{-1}\right)(D^{i-1,1})\right)=0.
$$
In turn, the Leray spectral sequence for the projection map $\pr_2:C^{i-1}\times C\to C$ reduces it to proving that $H^1(R^{i-1}\otimes B_i(\xi)\otimes L_1^{-1})=0$, where
$$
R^{i-1}:=R^{i-1}\pr_{2,\ast}\left(\A^{q+j+2}Q^{i-1}_{L_1}\otimes B_1(\xi)\boxtimes\cdots\boxtimes B_{i-1}(\xi)\right)\boxtimes\OO_C(D^{i-1,1}).
$$
Note that the last vanishing holds when $R^{i-1}$ has finite support and that the base change map of $R^{i-1}$ has the form
$$
R^{i-1}\otimes\mathbb C(z)\to H^{i-1}(\A^{q+j+2}Q^{i-1}_{L_1}\otimes B_1(\xi+z)\boxtimes\cdots\boxtimes B_{i-1}(\xi+z))
$$
over any point $z\in C$. Hence, by cohomology and base change it suffices to obtain
$$
H^{i-1}(\A^{q+j+2}Q^{i-1}_{L_1}\otimes B_1(\xi+z)\boxtimes\cdots\boxtimes B_{i-1}(\xi+z))=0
$$
for a general point $z\in C$. The induction process works.
\end{proof}

We close this subsection with a computation of higher direct images, which enables us to argue, via the Leray spectral sequence, a kind of sheaf cohomology vanishing around the incidence divisor $D_{q+1}^i\subset C_{q+1}\times C^i$.

\begin{Lem}[cf.\ {\cite[Lemma 3.3]{choe2023syzygies}}]\label{Lem:Leray}
Let $L_1,B_1,\ldots,B_i\in\Pic(C)$ be line bundles on $C$, and suppose that $\deg L_1\geq 2g+i-1$ and $\deg B_k\geq 2g+q$ for all $1\leq k\leq i$. Consider the following projection maps.
$$
\begin{tikzcd}
    & C_{q+1}\times C^i \ar[dl,"\pr_1",swap] \ar[dr,"\pr_2"] \\
    C_{q+1} & & C^i
\end{tikzcd}
$$
Then one has
\begin{align*}
R^j\pr_{1,\ast}\OO_{C_{q+1}}\boxtimes(B_1\boxtimes\cdots\boxtimes B_i)(-D_{q+1}^i) & =
\begin{cases}
    M_{q+1,B_1}\otimes\cdots\otimes M_{q+1,B_i} & \textup{if }j=0 \\
    0 & \textup{otherwise}
\end{cases}
\quad\text{and}\\
R^j\pr_{2,\ast}N_{q+1,L_1}\boxtimes\OO_{C^i}(-D_{q+1}^i) & =
\begin{cases}
    \A^{q+1}Q^i_{L_1} & \textup{if }j=0 \\
    0 & \textup{otherwise}.
\end{cases}
\end{align*}
\end{Lem}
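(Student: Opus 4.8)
The plan is to compute both higher direct images by \emph{cohomology and base change}: first I verify that the restriction of the relevant sheaf to every fiber has cohomology concentrated in degree $0$, which forces $R^j=0$ for $j>0$ and makes $R^0$ locally free with formation commuting with base change, and then I identify this $R^0$ with the asserted bundle by realizing both as subbundles of a fixed trivial bundle having the same fiber subspace at every point. Both sheaves are line bundles on $C_{q+1}\times C^i$, hence flat over either base, so the base change package applies. Throughout I use the elementary principle that two subbundles $A,A'$ of a fixed vector bundle $B$ with $A_x=A'_x$ inside $B_x$ for all $x$ coincide: the composite $A\hookrightarrow B\to B/A'$ vanishes on every fiber, hence is the zero map, so $A\subseteq A'$, and symmetrically.

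For the first formula I restrict $\OO_{C_{q+1}}\boxtimes(B_1\boxtimes\cdots\boxtimes B_i)(-D_{q+1}^i)$ to the fiber $\{\xi\}\times C^i$ of $\pr_1$. Since $(D_{q+1}^i)_\xi=\xi\times C^{i-1}+\cdots+C^{i-1}\times\xi$ lies in $|\OO_C(\xi)^{\boxtimes i}|$, the restriction becomes the external product $B_1(-\xi)\boxtimes\cdots\boxtimes B_i(-\xi)$. Because $\deg B_k(-\xi)=\deg B_k-(q+1)\geq 2g-1$, each factor has no $H^1$, so by the Künneth formula the fiber cohomology sits in degree $0$ and equals $\bigotimes_k H^0(B_k(-\xi))$. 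Base change then yields $R^j\pr_{1,*}=0$ for $j>0$ and a locally free $R^0\pr_{1,*}$. The inclusion $\OO(-D_{q+1}^i)\hookrightarrow\OO$ embeds it into $R^0\pr_{1,*}(\OO_{C_{q+1}}\boxtimes(B_1\boxtimes\cdots\boxtimes B_i))=\bigl(\bigotimes_k H^0(B_k)\bigr)\otimes\OO_{C_{q+1}}$ as the subbundle with fiber $\bigotimes_k H^0(B_k(-\xi))$. Since $\deg B_k\geq 2g+q$ makes each $B_k$ $q$-very ample, $M_{q+1,B_k}\subseteq H^0(B_k)\otimes\OO_{C_{q+1}}$ is the subbundle with fiber $H^0(B_k(-\xi))$, so $M_{q+1,B_1}\otimes\cdots\otimes M_{q+1,B_i}$ is the subbundle with fiber $\bigotimes_k H^0(B_k(-\xi))$. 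By the coincidence principle the two agree.

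For the second formula I restrict $N_{q+1,L_1}\boxtimes\OO_{C^i}(-D_{q+1}^i)$ to the fiber $C_{q+1}\times\{(z_1,\ldots,z_i)\}$ of $\pr_2$. As $(D_{q+1}^i)_{(z_1,\ldots,z_i)}=X_{q+1}^{z_1}+\cdots+X_{q+1}^{z_i}\in|S_{q+1,\OO_C(z_1+\cdots+z_i)}|$, the restriction is $N_{q+1,L_1}\otimes S_{q+1,\OO_C(-z_1-\cdots-z_i)}$; writing $N_{q+1,L_1}=S_{q+1,L_1}(-\delta_{q+1})$ and using the multiplicativity $S_{q+1,B}\otimes S_{q+1,B'}=S_{q+1,B\otimes B'}$ coming from the definition, this is $N_{q+1,L_1(-z_1-\cdots-z_i)}$. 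Since $\deg\bigl(L_1(-z_1-\cdots-z_i)\bigr)=\deg L_1-i\geq 2g-1$, we have $H^1(L_1(-z_1-\cdots-z_i))=0$, so the formula $H^j(N_{q+1,B})=\A^{q+1-j}H^0(B)\otimes S^jH^1(B)$ shows the fiber cohomology is concentrated in degree $0$ and equals $\A^{q+1}H^0(L_1(-z_1-\cdots-z_i))$. Base change gives $R^j\pr_{2,*}=0$ for $j>0$ and a locally free $R^0\pr_{2,*}$, which the inclusion $\OO(-D_{q+1}^i)\hookrightarrow\OO$ embeds into $H^0(N_{q+1,L_1})\otimes\OO_{C^i}=\A^{q+1}H^0(L_1)\otimes\OO_{C^i}$ as the subbundle with fiber $\A^{q+1}H^0(L_1(-z_1-\cdots-z_i))$. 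On the other hand, as $L_1$ is $(i-1)$-very ample, $Q^i_{L_1}=\pi_i^*M_{i,L_1}\subseteq H^0(L_1)\otimes\OO_{C^i}$ is the subbundle with fiber $H^0(L_1(-z_1-\cdots-z_i))$, so $\A^{q+1}Q^i_{L_1}$ is the subbundle of $\A^{q+1}H^0(L_1)\otimes\OO_{C^i}$ with the same fiber, and the two coincide.

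The main obstacle I anticipate is the last identification in each case: after base change one only knows that $R^0$ is \emph{abstractly} a bundle with the correct fibers, and the content is to pin down its \emph{embedding} into the ambient trivial bundle and check it matches that of $M_{q+1,B_1}\otimes\cdots\otimes M_{q+1,B_i}$ (respectively $\A^{q+1}Q^i_{L_1}$). This is exactly where I must verify that the natural maps induced by $\OO(-D_{q+1}^i)\hookrightarrow\OO$ are genuine subbundle inclusions whose fibers are the expected \emph{subspaces} (not merely spaces of the right dimension), and where the very ampleness hypotheses enter to guarantee that the comparison bundles $M_{q+1,B_k}$ and $M_{i,L_1}$ have the claimed fibers. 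Once this is in place, the fiberwise coincidence principle closes the argument without further computation.
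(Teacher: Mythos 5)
Your proposal is correct and follows essentially the same route as the paper: cohomology and base change after checking that the fiberwise cohomology of the restricted line bundle (computed via the fibers of $D_{q+1}^i$, the K\"unneth formula, and the degree bounds $\deg B_k(-\xi)\geq 2g-1$, $\deg L_1(-z_1-\cdots-z_i)\geq 2g-1$) is concentrated in degree $0$, followed by identifying $R^0$ with the asserted tensor/exterior product as subbundles of the same trivial bundle with equal fibers. You in fact supply slightly more detail than the paper, which only writes out the first equality and compresses the final identification into ``the only possibility is $M_{q+1,B_1}\otimes\cdots\otimes M_{q+1,B_i}$.''
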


\begin{proof}
We provide a proof only for the first equality as the second one follows in a similar manner. For the argument below observe that
$$
h^j(B_k(-\xi))=
\begin{cases}
    h^0(B_k)-q-1 & \textup{if }j=0 \\
    0 & \textup{otherwise}
\end{cases}
$$
for all $j,k\in\mathbb N$ and $\xi\in C_{q+1}$ because of the degree bounds $\deg B_k\geq 2g+q$. Put $R^j=R^j\pr_{1,\ast}\OO_{C_{q+1}}\boxtimes(B_1\boxtimes\cdots\boxtimes B_i)(-D_{q+1}^i)$ for simplicity. If $j=0$, then the inclusion
\begin{align*}
\pr_{1,\ast}\OO_{C_{q+1}}\boxtimes(B_1\boxtimes\cdots\boxtimes B_i)(-D_{q+1}^i) & \subseteq\pr_{1,\ast}\OO_{C_{q+1}}\boxtimes(B_1\boxtimes\cdots\boxtimes B_i) \\
&=H^0(B_1)\otimes\cdots\otimes H^0(B_i)\otimes\OO_{C_{q+1}}
\end{align*}
is given, its base change maps look like
$$
H^0(B_1(-\xi))\otimes\cdots\otimes H^0(B_i(-\xi))\subseteq H^0(B_1)\otimes\cdots\otimes H^0(B_i),
$$
and the subspaces have the same dimension. Hence, in this case $R^0$ turns out to be a subbundle of $H^0(B_1)\otimes\cdots\otimes H^0(B_i)\otimes\OO_{C_{q+1}}$ by cohomology and base change, and the only possibility is
$$
R^0=M_{q+1,B_1}\otimes\cdots\otimes M_{q+1,B_i}.
$$
If $j\geq 1$, then the base change map of $R^j$ is computed as
\begin{align*}
    R^j\otimes\mathbb C(\xi) & \to H^j(B_1(-\xi)\boxtimes\cdots\boxtimes B_i(-\xi)) \\
    & =\bigoplus_{j_1+\cdots+j_i=j}H^{j_1}(B_1(-\xi))\otimes\cdots\otimes H^{j_i}(B_i(-\xi)) \\
    & =0
\end{align*}
via the K{\"u}nneth formula, and consequently, we have $R^j=0$ again by cohomology and base change.
\end{proof}

\section{Green-Lazarsfeld classes revisited}

In this section we reformulate the \emph{Green-Lazarsfeld classes} using multilinear algebra. This approach sheds light on their useful substantial structures.

We begin with a preparatory step. In what follows for simplicity we use the Einstein summation convention: ``repeated indices are implicitly summed over." For example, we may write
\begin{equation}\label{Eqn:fG}
f=\frac{1}{\alpha!}u^\alpha\otimes f_\alpha\quad\text{and}\quad G=\frac{1}{\beta!}w^\beta\otimes G_\beta,
\end{equation}
where $|\alpha|=a$ and $|\beta|=b+1$, for arbitrary elements $f\in S^aU\otimes\A^{b+q+1}U$ and $G\in S^{b+1}W\otimes\A^{a+q}W$ with respect to given bases $u$ and $w$ of $U$ and $W$, respectively. Also, we ignore the scaling of nonzero coefficients when it causes no issues.

\begin{Def}
We define a product
$$
\boxtimes^0=\boxtimes_{(a,b),q+1}^0:(S^aU\otimes\A^{b+q+1}U)\times(S^{b+1}W\otimes\A^{a+q}W)\to\A^{a+b+1}(U\otimes W)\otimes S^q(U\otimes W)
$$
as follows. Let $(f,G)$ be an arbitrary pair in the domain. Consider the maps
$$
\begin{tikzcd}
S^aU\otimes\A^{b+q+1}U \ar[r,"\id\otimes\Delta"] & S^aU\otimes\A^{b+1}U\otimes\A^qU
\end{tikzcd}
$$
and
$$
\begin{tikzcd}
S^{b+1}W\otimes\A^{a+q}W \ar[r,"\id\otimes\Delta"] & S^{b+1}W\otimes\A^aW\otimes\A^qW \ar[r,equal] & \A^aW\otimes S^{b+1}W\otimes\A^qW.
\end{tikzcd}
$$
We identify $f$ and $G$ with their corresponding images so that
$$
f\otimes G\in\A^a(U\otimes W)\otimes\A^{b+1}(U\otimes W)\otimes S^q(U\otimes W)
$$
up to the map $\edet\otimes\edet\otimes\det$. Going through the wedge product $\wedge$ on the factor $\A^a(U\otimes W)\otimes\A^{b+1}(U\otimes W)$, we obtain
$$
f\boxtimes^0 G\in\A^{a+b+1}(U\otimes W)\otimes S^q(U\otimes W).
$$

Similarly, under the natural interchange one may establish another product
$$
\boxtimes_0=\boxtimes^{(a,b),q+1}_0:(\A^aU\otimes S^{b+q+1}U)\times(S^{b+1}W\otimes\A^{a+q}W)\to S^{a+b+1}(U\otimes W)\otimes\A^q(U\otimes W).
$$

\end{Def}

The products $\boxtimes^0$ and $\boxtimes_0$ have the following key properties in relation to some subspaces of $K_{a+b,q+1}(U,W)$ and $K^{a+b,q+1}(U,W)$. 

\begin{Def}
We introduce
\begin{align*}
K_{p,q+1}(U,W) & =Z_{p,q+1}(U\otimes W)\cap\left(\A^p(U\otimes W)\otimes(\A^{q+1}U\otimes\A^{q+1}W)\right)\quad\text{and} \\
K^{p,q+1}(U,W) & =Z^{p,q+1}(U\otimes W)\cap\left(S^p(U\otimes W)\otimes(S^{q+1}U\otimes\A^{q+1}W)\right).
\end{align*}
\end{Def}

\begin{Rmk}
The space $K_{p,q+1}(U,W)$ is equal to the bottom syzygy space $K_{p,q+1}(I(\sigma_qY))$ for the Segre variety $Y:=\mathbb PU\times\mathbb PW\subseteq\mathbb P(U\otimes W)$.
\end{Rmk}

\begin{Prop}\label{Prop:Product}
Let $f$ and $G$ be as in \eqref{Eqn:fG}. Then 
$$
\overline{f\boxtimes^0G}:=\delta_{a+b+1,q}(f\boxtimes^0G)\in Z_{a+b,q+1}(U\otimes W)
$$ 
satisfies the following.
\begin{enumerate}
    \item\label{Prop:ProductItem1} $\overline{f\boxtimes^0G}\in K_{a+b,q+1}(U,W)$.
    \item\label{Prop:ProductItem2} If we fix $f\in Z^{a,b+q+1}(U)$, then $\overline{f\boxtimes^0G}$ depends only on $\delta^{b+1,a+q}(G)\in Z^{b,a+q+1}(W)$.
    \item\label{Prop:ProductItem3} If $G=w_0^{b+1}\otimes w_{\{1,\ldots,a+q\}}$ provided that $\dim W\geq a+q+1$, then $\overline{f\boxtimes^0G}$ involves a nonzero scalar multiple of
    $$
\partial^u_{\overline{\alpha}'}f_\alpha\otimes w_{\{0,a+1,\ldots,a+q\}}\in\A^{q+1}U\otimes\A^{q+1}W
$$
as the cofactor of
    $$
    (u^\alpha\otimes w_{\{1,\ldots,a\}})\wedge\left(u_{\overline{\alpha}'}\otimes w_0^b\right)\in\A^{a+b}(U\otimes W),
    $$
    for all subsets $\alpha,\overline{\alpha}'\subseteq\{0,\ldots,\dim U-1\}$ with $|\alpha|=a$ and $|\overline{\alpha}'|=b$.
\end{enumerate}
Their counterparts for $\boxtimes_0=\boxtimes^{(a,b),q+1}_0$ are also valid.
\end{Prop}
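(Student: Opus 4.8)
The plan is to fix the bases $u$ and $w$ once and for all and carry out every step in the construction of $\boxtimes^0$ in these coordinates, so that the three assertions become explicit manipulations governed by four tools: the Leibniz rule for the coproduct $\Delta$, Euler's formula (\Cref{Prop:Euler}), Jacobi's formula (\Cref{Prop:Jacobi}), and the factorization lemma (\Cref{Lem:FactorWedge}). Throughout I would write $f=\tfrac{1}{\alpha!}u^\alpha\otimes f_\alpha$ and $G=\tfrac{1}{\beta!}w^\beta\otimes G_\beta$ as in \eqref{Eqn:fG} and track the three tensor slots $\A^a(U\otimes W)$, $\A^{b+1}(U\otimes W)$, and $S^q(U\otimes W)$ produced by $\edet\otimes\edet\otimes\det$ before the final wedge. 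The key structural observation is that the $S^q(U\otimes W)$-slot is, by construction, the $\det$-image of the pair $(\A^qU,\A^qW)$ split off from $\A^{b+q+1}U$ and $\A^{a+q}W$; this is precisely what makes \Cref{Lem:FactorWedge} applicable after the differential $\delta_{a+b+1,q}$ is applied.

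For (1) I would compute $\delta_{a+b+1,q}(f\boxtimes^0 G)$ by expanding the coproduct on $\A^{a+b+1}(U\otimes W)$ via Leibniz, so that a single factor of $U\otimes W$ is extracted and multiplied into the $\det$-slot. Recombining that extracted factor with the $\A^qU$ and $\A^qW$ already inside the determinant is exactly the composite of \Cref{Lem:FactorWedge} (its $\det$-version), whence the product lands in $\A^{q+1}U\otimes\A^{q+1}W\subseteq S^{q+1}(U\otimes W)$. Thus the $S^{q+1}$-slot of $\overline{f\boxtimes^0 G}$ lies in $\A^{q+1}U\otimes\A^{q+1}W$, while $\overline{f\boxtimes^0 G}\in Z_{a+b,q+1}(U\otimes W)$ is automatic from $\delta^2=0$; together these place $\overline{f\boxtimes^0 G}$ in $K_{a+b,q+1}(U,W)$.

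The crux is (2), and I expect this to be the main obstacle. The goal is a Leibniz-type identity expressing $\delta_{a+b+1,q}(f\boxtimes^0 G)$ as a sum of one term built from $\delta^{a,b+q+1}(f)$ and one term built from $\delta^{b+1,a+q}(G)$. To produce it I would push the single $U\otimes W$-derivative coming from $\delta$ through both $\edet$'s and the $\det$ using Jacobi's formula (\Cref{Prop:Jacobi}), which converts $\partial/\partial(u_i\otimes w_j)$ into the separate actions of $\partial^u_i$, $\partial_u^i$ on the $U$-factors and $\partial^w_j$ on the $W$-factors; Euler's formula (\Cref{Prop:Euler}) then reassembles the split pieces. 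The $U$-part of the resulting expression is precisely $\delta^{a,b+q+1}(f)$, so fixing $f\in Z^{a,b+q+1}(U)$ annihilates it and leaves only the $W$-part, which depends on $G$ solely through $\delta^{b+1,a+q}(G)\in Z^{b,a+q+1}(W)$. The delicate points will be bookkeeping the signs introduced by reordering exterior factors and verifying that the reassembly via Euler's formula yields honest $\delta$'s rather than spurious boundary terms.

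Finally, (3) is a direct computation. Substituting $G=w_0^{b+1}\otimes w_{\{1,\ldots,a+q\}}$, the $\edet$ on $(\A^{b+1}U,w_0^{b+1})$ produces a wedge of factors all carrying $W$-index $0$, while the $\Delta$ on $\A^{a+q}W$ contributes $w_{\{1,\ldots,a\}}$ to the $\A^aW$-slot and $w_{\{a+1,\ldots,a+q\}}$ to the $\det$-slot, up to sign. To realize the prescribed monomial $(u^\alpha\otimes w_{\{1,\ldots,a\}})\wedge(u_{\overline{\alpha}'}\otimes w_0^b)$, the differential $\delta$ must extract a factor of $W$-index $0$, which then wedges into the determinant's $\A^qW=w_{\{a+1,\ldots,a+q\}}$ to give $w_{\{0,a+1,\ldots,a+q\}}$; on the $U$-side the extracted index completes $\overline{\alpha}'$ inside $f_\alpha$, producing $\partial^u_{\overline{\alpha}'}f_\alpha\in\A^{q+1}U$. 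Tracking the surviving terms together with the Laplace expansion then identifies the cofactor as a nonzero scalar multiple, a product of factorials and a sign, of $\partial^u_{\overline{\alpha}'}f_\alpha\otimes w_{\{0,a+1,\ldots,a+q\}}$. The counterparts for $\boxtimes_0$ follow verbatim after interchanging the roles of $\det$ and $\edet$ and of the symmetric and exterior algebras, so no separate argument is needed.
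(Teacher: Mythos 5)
Your outline for parts (1) and (3) matches the paper's proof: expand $(A)=(\partial^u_i\otimes\partial^w_j)(\cdots)$ by the Leibniz rule into two terms, use \Cref{Prop:Jacobi}, \Cref{Prop:Euler} and \Cref{Lem:FactorWedge} to see that each term's $S^{q+1}(U\otimes W)$-slot reassembles inside $\A^{q+1}U\otimes\A^{q+1}W$, and for (3) specialize $G$ and track which monomials survive. That part is sound.

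The gap is in (2), and you have correctly located it yourself (``I expect this to be the main obstacle''). Your plan rests on an unproven identity of the shape $\overline{f\boxtimes^0G}=\Phi(\delta^{a,b+q+1}f,\,G)+\Psi(f,\,\delta^{b+1,a+q}G)$, obtained by ``pushing the derivative through'' and claiming that ``the $U$-part of the resulting expression is precisely $\delta^{a,b+q+1}(f)$.'' If you actually attempt this, the term $(A_1)\otimes(B)$ produces expressions like $\tfrac{1}{\alpha'!}u^{\alpha'}\otimes(u_i\wedge\partial^u_{\overline{\alpha}}f_{\alpha'+e_i})$, and commuting $u_i\wedge(-)$ past $\partial^u_{\overline{\alpha}}$ generates Leibniz cross terms; it is not at all clear that these organize into an honest $\delta^{a,b+q+1}(f)$ with no residue, and you give no computation showing they do. So as written, (2) is asserted, not proved. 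The paper sidesteps this entirely: the assignment $G\mapsto\overline{f\boxtimes^0G}$ is $\GL(U)\times\GL(W)$-equivariant, its source $S^{b+1}W\otimes\A^{a+q}W$ splits into the two hooks $S^{(b+2,1,\ldots,1)}W\oplus S^{(b+1,1,\ldots,1)}W$ with $\ker\delta^{b+1,a+q}=Z^{b+1,a+q}(W)=S^{(b+2,1,\ldots,1)}W$, and by the Lascoux decomposition (\Cref{Thm:Lascoux}, $m=1$) the target $K_{a+b,q+1}(U,W)$ contains no summand pairing $S^{(a+1,1,\ldots,1)}U$ with $S^{(b+2,1,\ldots,1)}W$; Schur's lemma (\Cref{Prop:TrivialitySchurtoSchur}) then kills the offending summand, which is exactly the statement that the product factors through $\delta^{b+1,a+q}(G)$. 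Note also that your closing claim that the $\boxtimes_0$ counterparts ``follow verbatim'' hides a real asymmetry: there is no Lascoux-type resolution directly computing $K^{c,q+1}(U,W)$, and the paper has to prove a separate containment (\Cref{Lem:LascouxDual}) via an auxiliary vector space to run the same Schur-module argument. Either carry out your Leibniz identity in full with all cross terms accounted for, or replace step (2) by the representation-theoretic argument above.
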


\begin{proof}
\ref{Prop:ProductItem1} One has
$$
    \overline{f\boxtimes^0G} = \underbrace{\left((\partial^u_i\otimes\partial^w_j)\left((u^\alpha\otimes w_{\overline{\beta}})\wedge(u_{\overline{\alpha}}\otimes w^\beta)\right)\right)}_{=:(A)}\otimes\underbrace{\left(\frac{1}{\alpha!\beta!}(u_i\otimes w_j)\cdot(\partial^u_{\overline{\alpha}}f_\alpha\otimes\partial^w_{\overline{\beta}}G_\beta)\right)}_{=:(B)},
$$
where $|\overline{\alpha}|=b+1$ and $|\overline{\beta}|=a$, and the Leibniz rule gives
$$
(A)=\underbrace{(\partial_u^iu^\alpha\otimes\partial^w_jw_{\overline{\beta}})\wedge(u_{\overline{\alpha}}\otimes w^\beta)}_{=:(A_1)}\pm\underbrace{(u^\alpha\otimes w_{\overline{\beta}})\wedge(\partial^u_iu_{\overline{\alpha}}\otimes\partial_w^jw^\beta)}_{=:(A_2)}.
$$
Then the term $(A_1)\otimes(B)$ lies in $\A^{a+b}(U\otimes W)\otimes(\A^{q+1}U\otimes\A^{q+1}W)$ since
\begin{flushleft}
\quad$\displaystyle{\frac{1}{\alpha!}(\partial_u^iu^\alpha\otimes\partial^w_jw_{\overline{\beta}})\otimes
\left((u_i\otimes w_j)\cdot(\partial^u_{\overline{\alpha}}f_\alpha\otimes\partial^w_{\overline{\beta}}G_\beta)\right)}$
\end{flushleft}\vspace*{-\baselineskip}
\begin{align*}
    & =\frac{1}{\alpha'!}(u^{\alpha'}\otimes w_{\overline{\beta}'})\otimes
    \left((u_i\otimes
    w_j)\cdot(\partial^u_{\overline{\alpha}}f_{\alpha'+e_i}\otimes(\partial^w_j\wedge\partial^w_{\overline{\beta}'})G_\beta)\right) \tag{$\alpha':=\alpha-e_i$ and $\overline{\beta}':=\overline{\beta}\setminus\{j\}$} \\
    & = \pm\frac{1}{\alpha'!}(u^{\alpha'}\otimes w_{\overline{\beta}'})\otimes
    \left((u_i\otimes w_j)\cdot(\partial^u_{\overline{\alpha}}f_{\alpha'+e_i}\otimes\partial^w_j\partial^w_{\overline{\beta}'}G_\beta)\right) \\
    & = \pm\frac{1}{\alpha'!}(u^{\alpha'}\otimes w_{\overline{\beta}'})\otimes \left((u_i\wedge\partial^u_{\overline{\alpha}}f_{\alpha'+e_i})\otimes(w_j\wedge\partial^w_j\partial^w_{\overline{\beta}'}G_\beta)\right) \tag{\Cref{Lem:FactorWedge} with $\partial^w_{\overline{\beta}'}G_\beta$ on $\A^{q+1}W$} \\
    & = \pm\frac{1}{\alpha'!}(u^{\alpha'}\otimes w_{\overline{\beta}'})\otimes
    \left((u_i\wedge\partial^u_{\overline{\alpha}}f_{\alpha'+e_i})\otimes \partial^w_{\overline{\beta}'}G_\beta\right), \tag{\Cref{Prop:Euler}}
\end{align*}
where $e_0,e_1,\ldots$ are the standard unit vectors. Similarly, so does $(A_2)\otimes(B)$ due to
\begin{flushleft}
\quad$\displaystyle{\frac{1}{\beta!}(\partial^u_iu_{\overline{\alpha}}\otimes\partial_w^jw^\beta)
\otimes\left((u_i\otimes w_j)\cdot(\partial^u_{\overline{\alpha}}f_\alpha\otimes\partial^w_{\overline{\beta}}G_\beta)\right)}$
\end{flushleft}\vspace*{-\baselineskip}
\begin{align*}
    & =\pm\frac{1}{\beta'!}(u_{\overline{\alpha}'}\otimes w^{\beta'})\otimes  \left(\partial^u_{\overline{\alpha}'}f_\alpha\otimes(w_j\wedge\partial^w_{\overline{\beta}}G_{\beta'+e_j})\right). \tag{$\overline{\alpha}':=\overline{\alpha}\setminus\{i\}$ and $\beta':=\beta-e_j$}
\end{align*}

\ref{Prop:ProductItem2} This follows from \Cref{Thm:Lascoux} together with
$$
Z^{a,b+q+1}(U)=S^{(a+1,1,\ldots,1)}U\quad\text{and}\quad\overline{f\boxtimes^0G}\in K_{a+b,q+1}(I(\sigma_qY))
$$
for the partition $(a+1,1,\ldots,1)\vdash a+b+q+1$ and the Segre variety $Y:=\mathbb PU\times\mathbb PW\subseteq\mathbb P(U\otimes W)$. For $\boxtimes_0$ see \Cref{Lem:LascouxDual} below. 

\ref{Prop:ProductItem3} We have
$$
\partial^w_{\overline{\beta}}G_\beta=
\begin{cases}
    \pm w_{\{1,\ldots,a+q\}\setminus\overline{\beta}}& \textup{if }\beta=(b+1)e_0\textup{ and }\overline{\beta}\subseteq\{1,\ldots,a+q\} \\
    0 & \textup{otherwise}.
\end{cases}
$$ 
Observe that the factors of $(A_1)\otimes(B)$ on $\A^{a+b}(U\otimes W)$ consist of $(u^{\alpha'}\otimes w_{\overline{\beta}'})\wedge(u_{\overline{\alpha}}\otimes w_0^{b+1})$ with $\overline{\beta}'\subseteq\{1,\ldots,a+q\}$, which can not have the form $(u^\alpha\otimes w_{\{1,\ldots,a\}})\wedge(u_{\overline{\alpha}'}\otimes w_0^b)$ and that the factors of $(A_2)\otimes(B)$ on $\A^{a+b}(U\otimes W)$ are $(u^\alpha\otimes w_{\overline{\beta}})\wedge(u_{\overline{\alpha}'}\otimes w_0^b)$ with $\overline{\beta}\subseteq\{1,\ldots,a+q\}$. Thus, the desired term is
$$
\left((u^\alpha\otimes w_{\{1,\ldots,a\}})\wedge(u_{\overline{\alpha}'}\otimes w_0^b)\right)\otimes
(\partial^u_{\overline{\alpha}'}f_\alpha\otimes w_{\{0,a+1,\ldots,a+q\}}).
$$
We are done.
\end{proof}

\begin{Lem}\label{Lem:LascouxDual}
For every integer $c\geq0$ we obtain a containment of the form
$$
K^{c,q+1}(U,W)\subseteq\bigoplus_{b=0}^cS^{(b+q+1,1,\ldots,1)}U\otimes S^{(b+1,1,\ldots,1)}W,
$$
where all the partitions are of $c+q+1$.
\end{Lem}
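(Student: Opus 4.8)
The plan is to deduce the statement from Lascoux's computation of bottom syzygies of $2$-factor Segre varieties (Theorem \ref{Thm:Lascoux}) by a \emph{parity change} in the $U$-variable, which turns the symmetric constructions on $U$ into exterior ones and thereby identifies $K^{c,q+1}(U,W)$ with a Koszul cycle space that Lascoux already resolves. First I would record the precise ambient picture. By definition $K^{c,q+1}(U,W)$ is the kernel of the dual Koszul differential $\delta^{c,q+1}$ on $U\otimes W$ intersected with the subspace $S^c(U\otimes W)\otimes(S^{q+1}U\otimes\A^{q+1}W)$, where $S^{q+1}U\otimes\A^{q+1}W$ sits inside $\A^{q+1}(U\otimes W)$ via $\edet$. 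The key observation is that this is the exact parity-partner of Lascoux's space $K_{c,q+1}(U,W)=Z_{c,q+1}(U\otimes W)\cap\big(\A^c(U\otimes W)\otimes(\A^{q+1}U\otimes\A^{q+1}W)\big)$, in which $\A^{q+1}U\otimes\A^{q+1}W$ sits inside $S^{q+1}(U\otimes W)$ via $\det$.

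Next I would make the parity change precise. Let $\Pi U$ denote $U$ placed in odd degree. The Koszul-sign-twisted (anti)symmetrizations on the tensor powers give natural $\GL(U)$-equivariant isomorphisms $S^nU\cong\A^n(\Pi U)$ and $\A^nU\cong S^n(\Pi U)$, and more generally $S^\mu(\Pi U)\cong S^{\mu'}U$. Since $\Pi U\otimes W=\Pi(U\otimes W)$, these isomorphisms carry $S^c(U\otimes W)$ to $\A^c(\Pi U\otimes W)$ and $S^{q+1}U$ to $\A^{q+1}\Pi U$; moreover, because $\det$ and $\edet$ are exactly the even and odd restrictions of the single identification $T^{q+1}(\,\cdot\,)\otimes T^{q+1}W\cong T^{q+1}(\,\cdot\,\otimes W)$, the embedding $\edet$ is intertwined with $\det$. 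For the same reason the comultiplications and products defining $\delta^{c,q+1}$ are carried to those defining $\delta_{c,q+1}$, so that $Z^{c,q+1}(U\otimes W)$ corresponds to $Z_{c,q+1}(\Pi U\otimes W)$. Altogether this produces a natural identification
$$
K^{c,q+1}(U,W)\cong K_{c,q+1}(\Pi U,W).
$$

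I would then feed this into Lascoux's theorem. Applying the bottom-syzygy case of Theorem \ref{Thm:Lascoux} to the auxiliary space $\Pi U$ gives $K_{c,q+1}(\Pi U,W)=\bigoplus_{a+b=c}S^{(a+1,1,\ldots,1)}(\Pi U)\otimes S^{(b+1,1,\ldots,1)}W$, the partitions being $(a+1,1^{b+q})$ and $(b+1,1^{a+q})$ of $c+q+1$. Conjugating the $U$-factor via $S^{(a+1,1^{b+q})}(\Pi U)\cong S^{(a+1,1^{b+q})'}U=S^{(b+q+1,1^{a})}U$ and reindexing by $b$ (so $a=c-b$) yields precisely
$$
\bigoplus_{b=0}^{c}S^{(b+q+1,1,\ldots,1)}U\otimes S^{(b+1,1,\ldots,1)}W,
$$
with all partitions of $c+q+1$, which is the asserted space. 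This argument in fact gives equality, and the stated containment follows a fortiori; terms whose partition exceeds the available dimension simply vanish on both sides, so the conclusion is insensitive to $\dim U$ and $\dim W$, which is presumably why only the containment is recorded.

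The hard part will be the compatibility bookkeeping in the parity change: one must verify that the Koszul signs make the comultiplication–multiplication pair defining $\delta^{c,q+1}$ correspond to that defining $\delta_{c,q+1}$, and that the $\edet$-embedding of $S^{q+1}U\otimes\A^{q+1}W$ is carried to the $\det$-embedding of $\A^{q+1}\Pi U\otimes\A^{q+1}W$. I expect this sign check to be the genuine obstacle, and it is exactly what prevents reading the result off characters alone: the intersection of the two evident necessary conditions — lying in the hook $Z^{c,q+1}(U\otimes W)=S^{(c+1,1,\ldots,1)}(U\otimes W)$ and in the $\edet$-subspace — already admits extra, non-hook constituents, such as $S^{(2,2)}U\otimes S^{(3,1)}W$ when $(c,q)=(2,1)$, so the exclusion of these is a statement about the kernel itself rather than about which irreducibles are merely available.
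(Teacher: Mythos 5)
Your underlying idea --- transpose the $U$-variable so that $K^{c,q+1}(U,W)$ becomes an instance of the Lascoux bottom-syzygy space, then conjugate the $U$-partitions --- is exactly the mechanism of the paper's proof. The gap is in how you license the key step. Theorem \ref{Thm:Lascoux} is a statement about the minimal free resolution of $I(\sigma_qY)$ for an honest Segre variety $Y=\mathbb PV_1\times\mathbb PV_2$, proved geometrically; there is no Segre variety attached to the odd space $\Pi U$, so writing $K_{c,q+1}(\Pi U,W)=\bigoplus_{a+b=c}S^{(a+1,1,\ldots,1)}(\Pi U)\otimes S^{(b+1,1,\ldots,1)}W$ is not an application of the theorem as stated. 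To use it you would need to know that the bottom-syzygy identity is an identity of (super)polynomial bifunctors that persists under the parity-change equivalence --- and that, together with the $\edet\leftrightarrow\det$ and $\delta^{\bullet}\leftrightarrow\delta_{\bullet}$ sign compatibilities you yourself flag as unverified, is precisely the content that has to be supplied. As written, the proposal defers the entire burden of proof to these unperformed checks.

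The paper closes exactly this gap with a classical device: tensor with $\A^{c+q+1}\overline U$ for an auxiliary \emph{ordinary} vector space $\overline U$ of dimension $\geq c+q+1$, observe (via Jacobi's formula, as in Lemma \ref{Lem:WedgeProlong}) that $\A^{c+q+1}\overline U\otimes K^{c,q+1}(U,W)$ embeds into $K_{c,q+1}(\overline U\otimes U,W)$ --- a genuine Lascoux space --- and then use the Cauchy--Littlewood formula to see that the $\A^{c+q+1}\overline U$-isotypic part of $S^{(a+1,1,\ldots,1)}(\overline U\otimes U)$ is $\A^{c+q+1}\overline U\otimes S^{(b+q+1,1,\ldots,1)}U$ with $a+b=c$. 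This is the rigorous stand-in for your parity change, and it only yields a containment (the embedding need not surject onto the isotypic component), which is why the lemma is stated as one; the equality you assert is true but is established only later, in Theorem \ref{Thm:GreenLazarsfeld}, by exhibiting the summands via the products $\boxtimes$. If you replace ``apply Lascoux to $\Pi U$'' by this auxiliary-space argument, your proof becomes the paper's.
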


\begin{proof}
Taking an auxiliary vector space $\overline{U}$ of dimension $\geq c+q+1$ as in the proof of \Cref{Lem:WedgeProlong}, one can see that
$$
\A^{c+q+1}\overline{U}\otimes K^{c,q+1}(U,W)\subseteq K_{c,q+1}(\overline{U}\otimes U,W).
$$
Use the decomposition of $K_{c,q+1}(\overline{U}\otimes U,W)$ (\Cref{Thm:Lascoux} in the case $m=1$) with the help of \Cref{Thm:CL} to complete the proof.
\end{proof}

These features give rise to new products derived from the previous ones.

\begin{Def}
Let $U$ and $W$ be vector spaces. By
\begin{align*}
\boxtimes=\boxtimes_{(a,b),q+1}: & Z^{a,b+q+1}(U)\times Z^{b,a+q+1}(W)\to K_{a+b,q+1}(U,W)\quad\text{and} \\
\boxtimes=\boxtimes^{(a,b),q+1}: & Z_{a,b+q+1}(U)\times Z^{b,a+q+1}(W)\to K^{a+b,q+1}(U,W),
\end{align*}
we denote the products that are guaranteed by \Cref{Prop:Product}\ref{Prop:ProductItem1} and \ref{Prop:ProductItem2}.
\end{Def}

Note that $K_{p,q+1}(U,W)$ is nothing but $K_{p,q+1}(I(\sigma_qY))$ for the Segre variety $Y:=\mathbb PU\times\mathbb PW\subseteq\mathbb P(U\otimes W)$.

We now conclude this section with an interpretation of the Green–Lazarsfeld classes, which also provides a concrete description of the part $m=1$ of the Lascoux resolutions (\Cref{Thm:Lascoux}).

\begin{Thm}[Green-Lazarsfeld classes]\label{Thm:GreenLazarsfeld}
Let $c\geq 0$ and $q\geq 1$ be integers. Then the products $\boxtimes$ give decompositions
\begin{align*}
K_{c,q+1}(U,W) & =\bigoplus_{a+b=c}Z^{a,b+q+1}(U)\otimes Z^{b,a+q+1}(W)\quad\text{and} \\
K^{c,q+1}(U,W) & =\bigoplus_{a+b=c}Z_{a,b+q+1}(U)\otimes Z^{b,a+q+1}(W).
\end{align*}
Also, the following statements and their interchange partners hold.
\begin{enumerate}
    \item\label{Thm:GreenLazarsfeldItem1} Let $f\in Z^{a,b+q+1}(U)$ be an element and $B\subseteq\A^{q+1}U$ be a subspace. If $f\boxtimes g\in\A^{a+b}(U\otimes W)\otimes(B\otimes\A^{q+1}W)$ for all $g\in Z^{b,a+q+1}(W)$, then $f$ consists of the $b$-th prolongation $B^{(b)}$ on the factor $\A^{b+q+1}U$, that is, $f\in S^aU\otimes B^{(b)}$.
    \item\label{Thm:GreenLazarsfeldItem2} Let $M:U\otimes W\to V'$ be a matrix that can be realized as a multiplication map of line bundle global sections on a nonempty open subset of $X$. Then the induced map $K_{a+b,q+1}(U,W)\to Z_{a+b,q+1}(V')$ sends any product of the form
    $$
    f\boxtimes g:=f\boxtimes(w_0^b\otimes w_{\{0,1,\ldots,a+q\}}),\quad 0\neq f\in Z^{a,b+q+1}(U),
    $$
    to a nonzero cycle in $Z_{a+b,q+1}(V')$, where $\{w_0,w_1,\ldots\}$ is a suitable basis of $W$.
\end{enumerate}
\end{Thm}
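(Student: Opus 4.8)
The plan is to read the two Schur-module decompositions off the Lascoux resolution and its dual, and then to settle items (1) and (2) by means of the explicit cofactor recorded in \Cref{Prop:Product}\ref{Prop:ProductItem3}.

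\emph{Decompositions.} First I would match cycle spaces with Schur modules, $Z^{a,b+q+1}(U)=S^{(a+1,1,\ldots,1)}U$ and $Z^{b,a+q+1}(W)=S^{(b+1,1,\ldots,1)}W$, both partitions of $c+q+1$. With this identification the case $m=1$ of \Cref{Thm:Lascoux} reads $K_{c,q+1}(U,W)=\bigoplus_{a+b=c}Z^{a,b+q+1}(U)\otimes Z^{b,a+q+1}(W)$ as a $\GL(U)\times\GL(W)$-module, a multiplicity-free sum of pairwise non-isomorphic irreducibles (the first parts $a+1$ are distinct). By \Cref{Prop:Product}\ref{Prop:ProductItem1} and \ref{Prop:ProductItem2} the product $\boxtimes$ carries each summand equivariantly into $K_{c,q+1}(U,W)$, so \Cref{Prop:TrivialitySchurtoSchur} forces it to be either zero or an isomorphism onto the matching isotypic component. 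Granting the nonvanishing proved next, the images fill the whole sum, which is the asserted decomposition. For $K^{c,q+1}(U,W)$ I would argue identically, using \Cref{Lem:LascouxDual} for the containment $K^{c,q+1}(U,W)\subseteq\bigoplus_{a+b=c}Z_{a,b+q+1}(U)\otimes Z^{b,a+q+1}(W)$ into a multiplicity-free module and the product $\boxtimes_0$ to fill each summand.

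\emph{Nonvanishing and item (1).} Both rest on evaluating $\boxtimes$ at the distinguished cycle $g=w_0^b\otimes w_{\{0,1,\ldots,a+q\}}\in Z^{b,a+q+1}(W)$, which equals $\delta^{b+1,a+q}(G)$ for $G=w_0^{b+1}\otimes w_{\{1,\ldots,a+q\}}$, so that $f\boxtimes g=\overline{f\boxtimes^0G}$ up to scalar by \Cref{Prop:Product}\ref{Prop:ProductItem2}. By \Cref{Prop:Product}\ref{Prop:ProductItem3} the cofactor of $(u^\alpha\otimes w_{\{1,\ldots,a\}})\wedge(u_{\overline{\alpha}'}\otimes w_0^b)$ is a nonzero multiple of $\partial^u_{\overline{\alpha}'}f_\alpha\otimes w_{\{0,a+1,\ldots,a+q\}}$. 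If $f\neq0$ then some $f_\alpha\neq0$, and since the coproduct $\Delta\colon\A^{b+q+1}U\to\A^bU\otimes\A^{q+1}U$ is injective (its composite with the product is a nonzero scalar by \Cref{Prop:Euler}), some $\partial^u_{\overline{\alpha}'}f_\alpha\neq0$; hence $f\boxtimes g\neq0$, which gives the nonvanishing used above. For item (1), since $w_{\{0,a+1,\ldots,a+q\}}\neq0$, the hypothesis $f\boxtimes g\in\A^{a+b}(U\otimes W)\otimes(B\otimes\A^{q+1}W)$ forces $\partial^u_{\overline{\alpha}'}f_\alpha\in B$ for every $\overline{\alpha}'$ with $|\overline{\alpha}'|=b$ and every $\alpha$; by the definition of the prolongation this is exactly $\Delta(f_\alpha)\in\A^bU\otimes B$, i.e.\ $f_\alpha\in B^{(b)}$, whence $f\in S^aU\otimes B^{(b)}$. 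A single $g$ already exhibits all the relevant cofactors, so the quantifier over $g$ is more than enough.

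\emph{Item (2), the crux.} This is the genuinely geometric claim and the main obstacle. I would first use \Cref{Prop:StructureXmulti} to present $M$ as the multiplication $H^0(A_1)\otimes H^0(A_2)\to H^0(A_1\otimes A_2)$ of two movable linear systems on a nonempty open $U_0\subseteq X$; this $M$ is $1$-generic because a product of two nonzero sections on the integral $X$ is nonzero, the minor map $\A^{q+1}U\otimes\A^{q+1}W\to S^{q+1}V'$ becomes a product of $(q+1)\times(q+1)$ minors, and $S^{q+1}V'\hookrightarrow H^0(U_0^{q+1},(A_1\otimes A_2)^{\boxtimes q+1})$. By \Cref{Thm:Eisenbud1Generic}, fixing the nonzero vector $w_{\{0,a+1,\ldots,a+q\}}\in\A^{q+1}W$ makes $\A^{q+1}U\to S^{q+1}V'$ injective, so the distinguished cofactor $\partial^u_{\overline{\alpha}'}f_\alpha$ maps to a nonzero element of $S^{q+1}V'$. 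The real work is to rule out cancellation among the terms of $M_\ast(f\boxtimes g)$ sharing the same $\A^{a+b}V'$-coordinate: the remedy I would pursue is to choose the basis $\{w_0,w_1,\ldots\}$ adapted to $|A_2|$ and a weight filtration isolating the $\A^{a+b}V'$-coordinate attached to that cofactor, and then to evaluate the $S^{q+1}V'$-coefficients at a general configuration $(z_0,\ldots,z_q)\in U_0^{q+1}$, in the spirit of \Cref{Thm:BottomEqn}, where $1$-genericity keeps every participating minor nonzero and the distinguished term has no competitor. Controlling these competing contributions is precisely the delicate point, and it is where $1$-genericity, through \Cref{Thm:Eisenbud1Generic}, combined with the explicitness of \Cref{Prop:Product}\ref{Prop:ProductItem3}, carries the argument.
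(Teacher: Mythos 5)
Your treatment of the two decompositions and of item \ref{Thm:GreenLazarsfeldItem1} is correct and follows the paper's own route: Lascoux with $m=1$ (resp.\ \Cref{Lem:LascouxDual}) gives the ambient multiplicity-free module, \Cref{Prop:Product}\ref{Prop:ProductItem1} and \ref{Prop:ProductItem2} make $\boxtimes$ equivariant, \Cref{Prop:TrivialitySchurtoSchur} reduces everything to nonvanishing, and the explicit cofactor of \Cref{Prop:Product}\ref{Prop:ProductItem3} supplies both the nonvanishing and the membership $\partial^u_{\overline{\alpha}'}f_\alpha\in B$ needed for the prolongation statement. No issues there.

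Item \ref{Thm:GreenLazarsfeldItem2} is where your proposal stops short of a proof. You correctly identify the obstacle --- after pushing forward along $M$, distinct terms of $f\boxtimes g$ can collide in the $\A^{a+b}V'$ factor and cancel, and \Cref{Thm:Eisenbud1Generic} alone only controls the $\A^{q+1}U\to S^{q+1}V'$ direction --- but the remedy you sketch (``a weight filtration isolating the $\A^{a+b}V'$-coordinate'' plus evaluation of the $S^{q+1}V'$-coefficients at a general $(q+1)$-tuple of points) is not carried out, and as stated it attacks the wrong factor: the collisions happen on the syzygy factor $\A^{a+b}V'$, not on the equation factor $S^{q+1}V'$, so evaluating the latter at general points does not separate the competing terms. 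The paper's actual mechanism is a differentiation on the $\A^{a+b}V'$ factor: choose $a$ general points $z_1,\ldots,z_a\in X$, bases $u$ of $U$ and $w$ of $W$ with $u_i(z_j)\neq0$ and $w_i(z_j)\neq 0$ exactly when $i=j$, and a basis $x$ of $V'$ in which $x_i=M(u_i\otimes w_i)$ for $1\leq i\leq a$ while all other basis vectors vanish at $z_1,\ldots,z_a$. Applying $\partial^a/\partial x_1\cdots\partial x_a$ to $f\boxtimes g$ then annihilates every term except the one indexed by $\alpha=e_1+\cdots+e_a$, leaving (up to scale) $(u_{\overline{\alpha}'}\otimes w_0^b)\otimes(\partial^u_{\overline{\alpha}'}f_{e_1+\cdots+e_a}\otimes w_{\{0,a+1,\ldots,a+q\}})$; the injectivity statement of \Cref{Thm:Eisenbud1Generic} then shows this vanishes only if $f_{e_1+\cdots+e_a}=0$, which fails for general $z_1,\ldots,z_a$ since $f\neq0$. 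Without this (or an equivalent) isolation step your argument does not establish the nonvanishing, which in turn is also what your decomposition argument leans on when it invokes images of $\boxtimes$ being nonzero after composing with a geometric multiplication map.
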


\begin{proof}
We verify the first decomposition. To this end by \Cref{Thm:Lascoux} with $m=1$ it is enough to show that the product $\boxtimes$ forms an injection 
$$
\boxtimes:Z^{a,b+q+1}(U)\otimes Z^{b,a+q+1}(W)\hookrightarrow K_{a+b,q+1}(U,W)
$$
provided that $Z^{a,b+q+1}(U)$ and $Z^{b,a+q+1}(W)$ are both nonzero. Then \Cref{Prop:Product}\ref{Prop:ProductItem3} implies that the map has a nonzero image. Thus, we are done in the manner of \Cref{Prop:TrivialitySchurtoSchur}. As for the second decomposition, similarly, $K^{c,q+1}(U,W)$ contains all the summands in question up to the mappings of $\boxtimes$, and by \Cref{Lem:LascouxDual} no other components exist.

\ref{Thm:GreenLazarsfeldItem1} For this part we apply \Cref{Prop:Product}\ref{Prop:ProductItem3} to find that 
$$
\partial^u_{\overline{\alpha}'}f_\alpha\in B
$$ 
for all $\alpha\in\mathbb N^{\dim U}$ and $\overline{\alpha}'\subseteq\{0,\ldots,\dim U-1\}$ with $|\alpha|=a$ and $|\overline{\alpha}'|=b$, hence $f_\alpha\in B^{(b)}$ for every $\alpha\in\mathbb N^{
\dim U}$ by the definition of $B^{(b)}$. 

\ref{Thm:GreenLazarsfeldItem2} Let $z_1,\ldots,z_a\in X$ be general points, and choose bases $u$ and $w$ of $U$ and $W$, respectively, in such a way that $u_i(z_j)\neq0$ (resp.\ $w_i(z_j)\neq0$) if and only if $i=j$. Now consider a basis $x$ of $V'$ satisfying that 
\begin{center}
$x_i=M(u_i\otimes w_i)$ for all $1\leq i\leq a$, and the others vanish at $z_1,\ldots,z_a$.      
\end{center}
Then differentiating $f\boxtimes g$ with $\partial^a/\partial x_1\cdots\partial x_a$ on the factor $\A^{q+1}V'$, we have
\begin{equation}\label{Eqn:GeometricNonvanishing}
\frac{\partial^a}{\partial x_1\cdots\partial x_a}(f\boxtimes g)=(u_{\overline{\alpha}'}\otimes w_0^b)\otimes(\partial^u_{\overline{\alpha}'}f_{e_1+\cdots+e_a}\otimes w_{0,a+1,\ldots,a+q})
\end{equation}
up to scaling. Keeping in mind \Cref{Thm:Eisenbud1Generic} we find that the vanishing of \eqref{Eqn:GeometricNonvanishing} is equivalent to that of $f_{e_1+\cdots+e_a}$. However, since $z_1,\ldots,z_a\in X$ have been general, the form $f_{e_1+\cdots+e_a}$ is nonzero, which implies that so is $f\boxtimes g$.
\end{proof}

\section{Proofs}

In this section we prove the main results.

\subsection{Proof of \Cref{Thm:SegreDecompo}}
More generally, we show that
$$
K_{p,q+1}(V_1,\ldots,V_\ell)=\bigoplus_{p_\ast\vdash p}B_{p_\ast,q+1}(V_1,\ldots,V_\ell)
$$
for every $\ell\geq 1$, where
$$
K_{p,q+1}(V_1,\ldots,V_\ell):=\begin{cases}  Z_{p,q+1}(V_1\otimes\cdots\otimes V_\ell)\cap\left(\A^p(V_1\otimes\cdots\otimes V_\ell)\otimes(\A^{q+1}V_1\otimes\cdots\otimes\A^{q+1}V_\ell)\right) & \textup{if }2\mid\ell \\
Z^{p,q+1}(V_1\otimes\cdots\otimes V_\ell)\cap\left(S^p(V_1\otimes\cdots\otimes V_\ell)\otimes(\A^{q+1}V_1\otimes\cdots\otimes\A^{q+1}V_\ell)\right) & \textup{if }2\nmid\ell.
\end{cases}
$$
This suffices because of \Cref{Cor:SegreBottomEquation} which also shows \Cref{ThmItem:SegreDecompoTrivial}. 

We proceed by induction on $\ell\geq 1$. For the initial step $\ell=1$ there is nothing to prove. Set $\ell\geq 2$, and for simplicity assume that $\ell$ is even; the odd case is parallel to the even one. Since $\A^{q+1}V_1\otimes\cdots\otimes\A^{q+1}V_{\ell-1}\otimes\A^{q+1}V_\ell\subseteq\A^{q+1}(V_1\otimes\cdots\otimes V_{\ell-1})\otimes\A^{q+1}V_\ell$, we have
$$
K_{p,q+1}(V_1,\ldots,V_{\ell-1},V_\ell)\subseteq K_{p,q+1}(V_1\otimes\cdots\otimes V_{\ell-1},V_\ell)= \bigoplus_{a+b=p}Z^{a,b+q+1}(V_1\otimes\cdots\otimes V_{\ell-1})\otimes Z^{b,a+q+1}(V_\ell)
$$
by \Cref{Thm:GreenLazarsfeld} up to the products $\boxtimes$. Let $M\subseteq K_{p,q+1}(V_1,\ldots,V_{\ell-1},V_\ell)$ be an irreducible submodule over $\GL(V_1)\times\cdots\times\GL(V_{\ell-1})\times\GL(V_\ell)$. Obviously, it has the form 
$$
M=M'\otimes Z^{b,a+q+1}(V_\ell)
$$
with $M'\subseteq Z^{a,b+q+1}(V_1\otimes\cdots\otimes V_{\ell-1})$ irreducible over $\GL(V_1)\times\cdots\times\GL(V_{\ell-1})$. Remind that $M'\otimes Z^{b,a+q+1}(V_\ell)\subseteq\A^p(V_1\otimes\cdots\otimes V_{\ell-1}\otimes V_\ell)\otimes(\A^{q+1}V_1\otimes\cdots\otimes\A^{q+1}V_{\ell-1}\otimes\A^{q+1}V_\ell)$. \Cref{Thm:GreenLazarsfeld}\ref{Thm:GreenLazarsfeldItem1} and \Cref{Lem:WedgeProlong} imply that 
\begin{align*}
M' & \subseteq Z^{a,b+q+1}(V_1\otimes\cdots\otimes V_{\ell-1})\cap\left(S^a(V_1\otimes\cdots\otimes V_{\ell-1})\otimes(\A^{b+q+1}V_1\otimes\cdots\otimes\A^{b+q+1}V_{\ell-1})\right) \\
& = K_{a,b+q+1}(V_1,\ldots,V_{\ell-1}).
\end{align*}
That is, we reach
$$
K_{p,q+1}(V_1,\ldots,V_{\ell-1},V_\ell)\subseteq\bigoplus_{a+b=p}K_{a,b+q+1}(V_1,\ldots,V_{\ell-1})\otimes Z^{b,a+q+1}(V_\ell).
$$
On the other hand, by \Cref{Thm:GreenLazarsfeld}\ref{Thm:GreenLazarsfeldItem2} each summand indeed appears in $K_{p,q+1}(V_1,\ldots,V_{\ell-1},V_\ell)$. In conclusion, we have a decomposition
$$
K_{p,q+1}(V_1,\ldots,V_{\ell-1},V_\ell)=\bigoplus_{a+b=p}K_{a,b+q+1}(V_1,\ldots,V_{\ell-1})\otimes Z^{b,a+q+1}(V_\ell),
$$
making the induction work.

To verify \Cref{Thm:SegreDecompo}\ref{ThmItem:SegreDecompoUpper} we assume that $B_{p_\ast,q+1}(V_1,\ldots,V_{2k+2})\neq 0$ for some ordered partition $p_\ast=(p_1,\ldots,p_{2k+2})\vdash p$, that is, the inequalities in Condition \eqref{Cond:Nonvanishing} all hold. Sum them up to obtain 
$$
\sum_{i=1}^{2k+2}\dim V_i\geq (2k+1)p+(2k+2)(q+1).
$$
In other words, if $(2k+1)p>\sum_i(\dim V_i-q-1)$, then $B_{p_\ast,q+1}(V_1,\ldots,V_{2k+2})=0$ always.

\subsection{Proofs of \Cref{Thm:Main1,,Thm:Main2}}

We provide a single proof that covers both \Cref{Thm:Main1,,Thm:Main2}. We begin with the following commutative diagram.
$$
\begin{tikzcd}
    X \ar[r,dashed,"T^\ast|_X"] \ar[d,phantom,"{\rotatebox[origin=c]{-90}{$\subseteq$}}"] & \mathbb PV_1\times\cdots\times\mathbb PV_{2k+2} \ar[d,phantom,"{\rotatebox[origin=c]{-90}{$\subseteq$}}"] \\
    \mathbb PV \ar[r,dashed,"T^\ast",swap] & \mathbb P(V_1\otimes\cdots\otimes V_{2k+2})
\end{tikzcd}
$$
Note that the bottom map $T^\ast:\mathbb PV\dashrightarrow\mathbb P(V_1\otimes\cdots\otimes V_{2k+2})$ is linear. Thus, pulling back bottom syzygies of $\sigma_q\tau^kY$ for $Y:=\mathbb PV_1\times\cdots\times\mathbb PV_{2k+2}\subseteq\mathbb P(V_1\otimes\cdots\otimes V_{2k+2})$, we obtain those of $\sigma_q\tau^kX\subseteq\mathbb PV$:
$$
b_{p_\ast,q+1}(T):B_{p_\ast,q+1}(V_1,\ldots,V_{2k+2})\to K_{p,q+1}(I(\sigma_q\tau^kX)),
$$
and such bottom syzygies can be nontrivial as ensured by \Cref{Thm:GreenLazarsfeld}\ref{Thm:GreenLazarsfeldItem2}. To be more specific, if for each $1\leq i\leq 2k+2$ we take an appropriate basis $u$ of $V_i$ and put
$$
f_i=u_0^{p_i}\otimes u_{\{0,\ldots,p-p_i+q\}}\in Z^{p_i,p-p_i+q+1}(V_i),
$$
then $f_1\otimes\cdots\otimes f_{2k+2}$ maps to a nonzero bottom syzygy in $K_{p,q+1}(I(\sigma_q\tau^kX))$ through the products $\boxtimes$.

\begin{Rmk}
There is a different approach to \Cref{Thm:Main1}. Keep in mind \Cref{Prop:StructureXmulti}, \Cref{Thm:BottomEqn}, and their notations, and observe that
$$
\A^{q+1}V_i\subseteq H^0(L_i^{\boxtimes q+1}\otimes\II_{\Delta^{q+1}/U^{q+1}})
$$
for all $1\leq i\leq 2k+2$. So we are given the multiplication map
$$
\A^{q+1}V_1\otimes\cdots\otimes\A^{q+1}V_{2k+2}\to H^0(L^{\boxtimes q+1}\otimes\II_{\Delta^{q+1}/U^{q+1}}^{2k+2})
$$
which factors through $S^{q+1}V$. We are done by \Cref{Thm:BottomEqn}.
\end{Rmk}

\subsection{Proof of \Cref{Thm:EKS}}

In the spirit of \Cref{Thm:BottomEqn} we find that
$$
I(\sigma_q\tau^kC)_{q+1}=H^0(S_{q+1,L}(-(2k+2)\delta_{q+1})).
$$
(One could examine the short exact sequences
$$
\begin{tikzcd}
    0 \ar[r] & S_{2,L}(-(i+1)\Delta_2) \ar[r] & S_{2,L}(-i\Delta_2) \ar[r] & L^2\otimes\omega_{\Delta_2}^{2i} \ar[r] & 0
\end{tikzcd}
$$
for the sake of certainty.) So we show that for every integer $\ell\geq 2$ if $\deg L\geq\ell(2g+q)$, then a multiplication map
$$
H^0(N_{q+1,L_1})\otimes\cdots\otimes H^0(N_{q+1,L_\ell})\to H^0(S_{q+1,L}(-\ell\delta_{q+1}))
$$
is surjective for some factorization $L_1\otimes\cdots\otimes L_\ell=L$ in $\Pic(C)$.

Suppose that $\deg L\geq\ell(2k+q)$. We may decompose $L$ as 
$$
L=L_1\otimes\cdots\otimes L_\ell
$$
in $\Pic(C)$ so that
\begin{enumerate}
    \item $\deg L_i\geq 2g+q$ for all $1\leq i\leq\ell$, and
    \item if $g\geq 1$, and if $\deg L_1=\deg L_2=2g+q$, then $L_1$ is not isomorphic to $L_2$.
\end{enumerate}
Indeed, let $L_3,\ldots,L_\ell$ be any line bundles of degree $\geq 2g+q$ such that $L_{1,2}:=L\otimes L_3^{-1}\otimes\cdots\otimes L_\ell^{-1}$ has degree $\geq 4g+2q$. Now if $\deg L_{1,2}>4g+2q$, then choose an arbitrary line bundle $L_2$ of degree $2g+q\leq\deg L_2\leq\deg L_{1,2}-(2g+q)$, but if $\deg L_{1,2}=4g+2q$, then pick a general line bundle $L_2$ of degree $2g+q$ so that $L_2^2\not\cong L_{1,2}$ in the case $g\geq1$. Our procedure ends with $L_1:=L_{1,2}\otimes L_2^{-1}$.

For such a choice of $L_1,\ldots,L_\ell$ let $F_{q+1}(\widehat{L_1})_\ast$ be the total complex of $F_{q+1}(L_2)_\ast\otimes\cdots\otimes F_{q+1}(L_\ell)_\ast$ (see \eqref{Eqn:NResolution}), that is,
$$
F_{q+1}(\widehat{L_1})_i=\bigoplus_{i_2+\cdots+i_\ell=i}S^{i_2}M_{q+1,L_2}\otimes\cdots\otimes S^{i_\ell}M_{q+1,L_\ell}
$$
with constant bundle factors omitted, and $F_{q+1}(\widehat{L_1})_0=H^0(N_{q+1,L_2})\otimes\cdots\otimes H^0(N_{q+1,L_\ell})$. Observe that $N_{q+1,L_1}\otimes F_{q+1}(\widehat{L_1})_\ast$ is a resolution of $N_{q+1,L_1}\otimes N_{q+1,L_2}\otimes\cdots\otimes N_{q+1,L_\ell}=S_{q+1,L}(-\ell\delta_{q+1})$ and that $H^0(N_{q+1,L_1}\otimes F_{q+1}(\widehat{L_1})_0)=H^0(N_{q+1,L_1})\otimes H^0(N_{q+1,L_2})\otimes\cdots\otimes H^0(N_{q+1,L_\ell})$. 
Therefore, it is enough to obtain the vanishing 
$$
H^i(N_{q+1,L_1}\otimes F_{q+1}(\widehat{L_1})_i)=0
$$ 
for every $i\geq 1$.

Since $S^{i_j}M_{q+1,L_j}$ is a direct summand of $T^{i_j}M_{q+1,L_j}$ for each $2\leq j\leq \ell$, the desired vanishing in turn follows from the vanishing
\begin{equation}\label{Eqn:VanishingLB}
H^i(N_{q+1,L_1}\otimes M_{q+1,B_1}\otimes\cdots\otimes M_{q+1,B_i})=0    
\end{equation}
for line bundles $B_1,\ldots,B_i$ of degree $\geq 2g+q$ such that if $g\geq1$, and if $\deg L_1=\deg B_1=2g+q$, then $L_1\not\cong B_1$. Using the Leray spectral sequences for the projection maps 
$$
\pr_1:C_{q+1}\times C^i\to C_{q+1}\quad\textup{and}\quad\pr_2:C_{q+1}\times C^i\to C^i
$$ 
in conjunction with \Cref{Lem:Leray}, we find that \eqref{Eqn:VanishingLB} is equivalent to the vanishing
$$
H^i(\A^{q+1}Q^i_{L_1}\otimes B_1\boxtimes\cdots\boxtimes B_i)=0.
$$
Applying \Cref{Prop:VanishingLB} we are done. 

\subsection{Proof of \Cref{Cor:Byproduct}}

By \Cref{Thm:Main1} the tangent variety $\tau X$ has a quadratic equation, but $\sigma_2X$ can not by its nature. Thus, $\tau X$ is strictly included in $\sigma_2X$, which means that 
$$
\dim\tau X=2\dim X\quad\text{and}\quad\dim\sigma_2X=2\dim X+1
$$
due to \cite[Corollary 4]{MR541334}.

\section{Comments and questions}

This section collect comments and questions on this study.

\subsection{Tangent varieties to smooth curves}

Let $C$ be a smooth projective curve of genus $g$ and $C\subseteq\mathbb PH^0(L)$ be a complete embedding by a very ample line bundle $L$ on $C$.

This subsection focuses on the quantity
$$
\ell(\tau C):=\max\{p\geq 0:K_{p,2}(I(\tau C))\neq 0\}.
$$

The rational case $g=0$ has been thoroughly studied and understood, which indeed forms the core of the result by Aprodu–Farkas–Papadima–Raicu–Weyman mentioned in the introduction.

\begin{Thm}[{\cite[Theorem 5.4]{MR4022070}}] Let $C=\mathbb PU\subseteq\mathbb PS^dU$ be the rational normal curve of degree $d\geq 3$, where $\dim U=2$. Then for every integer $p\geq 0$ the Koszul cohomology group $K_{p,2}(I(\tau C))$ is isomorphic to the kernel of the following composition map
$$
\begin{tikzcd}
    S^{2p+2}U\otimes S^{d-p-3}(S^{p+2}U) \ar[rr] \ar[rd,"\gamma_p^\ast\otimes\textup{id}",swap] & & S^{p+2}U\otimes S^{d-p-2}(S^{p+2}U) \\
    & \A^{2}(S^{p+2}U)\otimes S^{d-p-3}(S^{p+2}U) \ar[ru,"\delta_{2,d-p-3}",swap]
\end{tikzcd}
$$
up to $\det U$ factors, where $\gamma_p:\A^{2}(S^{p+2}U^\ast)\to\otimes S^{2p+2}U^\ast$ is the Gaussian map $y^a\wedge y^b\mapsto (a-b)y^{a+b-1}$ for an affine coordinate $y$ on $\mathbb PU^\ast$: $U^\ast=\langle 1,y\rangle$ and $S^{p+2}U^\ast=\langle 1,y,\ldots,y^{p+2}\rangle$. 
\end{Thm}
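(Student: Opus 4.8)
The plan is to first make the low-degree pieces of the ideal completely explicit, then transport the defining Koszul differential onto the symmetric square $C_2=\mathbb P^2$, and finally read off the Gaussian map by restricting to the diagonal. Write $V=S^dU$, so $C\subseteq\mathbb PV$ and $\tau C=\sigma_1\tau^1C$. Since $\tau C$ is nondegenerate with $I(\tau C)_1=0$, the incoming Koszul term $\A^{p+1}V\otimes I(\tau C)_1$ vanishes, and hence $K_{p,2}(I(\tau C))=\ker\big(\A^pV\otimes I(\tau C)_2\to\A^{p-1}V\otimes I(\tau C)_3\big)$. The first step is to pin down $I(\tau C)_2$ and $I(\tau C)_3$. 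Applying \Cref{Thm:BottomEqn} with $k=1$ (so $2k+2=4$) together with the symmetric-product dictionary of \Cref{Sec:TanSec}, I would identify $I(\tau C)_{q+1}$ with the sections of $L^{\boxtimes q+1}$ on $(\mathbb P^1)^{q+1}$ vanishing to order $4$ along the big diagonal; for $q=1$ this descends to $C_2=\mathbb P^2$ and yields $I(\tau C)_2=H^0(C_2,S_{2,L}(-2\Delta_2))=H^0(\mathbb P^2,\OO(d-4))$, while \Cref{Thm:Prolong} gives the matching description of $I(\tau C)_3$ by prolongation.

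Next I would transport the Koszul differential to $C_2$. Pulling everything back $\mathfrak S_2$-equivariantly to $C\times C=\mathbb P^1\times\mathbb P^1$ and using the kernel bundle $M_{2,L}$ and the locally free resolution $F_2(L)_\ast=S^\ast M_{2,L}\otimes\A^\ast H^0(L)$ of $N_{2,L}$ from the symmetric-power constructions of Section~2.5, the group $K_{p,2}(I(\tau C))$ should be realized as a single twisted sheaf-cohomology group built from $\A^{p+1}M_{2,L}$, with the genuinely new contribution supported along the diagonal $\Delta_2\cong C$ — this is the locus recording the second-order data that distinguishes $\tau C$ from the $\mathbb P^1$-bundle $\mathbb P(P^1(\OO_{\mathbb P^1}(d)))$ that normalizes it.

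The computational heart is then a calculation on $\mathbb P^1$. Restricting to $\Delta_2\cong\mathbb PU$ and using $\OO_{C_2}(-\delta_2)|_{\Delta_2}\cong\omega_C=\OO_{\mathbb P^1}(-2)$, the surviving auxiliary space — by Bott vanishing on $\mathbb P^1$ and the hypothesis $\dim U=2$ — is exactly $H^0(\OO_{\mathbb P^1}(p+2))=S^{p+2}U=:W$, and the remaining free directions produce the symmetric powers $S^{d-p-3}W=H^0(\mathbb P(W^\ast),\OO(d-p-3))$ and $S^{d-p-2}W$. The connecting differential is precisely the first-order comparison along $\Delta_2$, namely the Gaussian (Wahl) map $\gamma_p\colon\A^2W^\ast\to S^{2p+2}U^\ast$, so that $\delta$ is identified with $\gamma_p^\ast\otimes\id$ followed by the Koszul differential $\delta_{2,d-p-3}$ on $W$. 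Since $\ker\delta_{2,d-p-3}=Z_{2,d-p-3}(W)$ and $\gamma_p^\ast$ is injective, this realizes $K_{p,2}(I(\tau C))$ as $(\im\gamma_p^\ast\otimes S^{d-p-3}W)\cap Z_{2,d-p-3}(W)$, which is the kernel displayed in the statement.

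The hard part will be the two transport steps, i.e.\ proving that the Koszul differential of $\tau C$ is \emph{literally} $\gamma_p^\ast\otimes\id$ composed with $\delta_{2,d-p-3}$ and that the auxiliary space is exactly $S^{p+2}U$. The obstruction is the non-normality of $\tau C$: the curve $C$ is a cuspidal edge, so the homogeneous coordinate ring of $\tau C$ is a proper subring of the total coordinate ring of the jet-bundle that parametrizes it, and the Gaussian map is exactly the term measuring this second-order defect along $\Delta_2$. Showing that every other cohomology contribution vanishes — leaving only the Gaussian contribution — is where $\dim U=2$ and Bott vanishing do the real work. If the sheaf bookkeeping proves unwieldy, a concrete fallback is to run the same computation from the affine parametrization $x_i=u^i+i\,v\,u^{i-1}$, writing down explicit bases of $I(\tau C)_2$ and $I(\tau C)_3$ and checking the differential against $\gamma_p$ directly; this replaces the geometry by a longer but routine linear-algebra verification.
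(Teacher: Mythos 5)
This statement is not proved in the paper at all: it is recalled verbatim from \cite[Theorem 5.4]{MR4022070} as background for the discussion of $\ell(\tau C)$, so there is no internal proof to measure your attempt against; the only benchmark is the argument of Aprodu–Farkas–Papadima–Raicu–Weyman themselves, which runs through the normalization $\mathbb P^1\times\mathbb P^1\to\tau C$, the exact sequence $0\to\OO_{\tau C}\to\nu_\ast\OO\to\mathcal F\to 0$ with $\mathcal F$ supported on the cuspidal edge, the explicitly known syzygies of the (toric) normalization, and the theory of Koszul modules. Your route via $C_2$ and the diagonal is genuinely different from that, and your first step is sound: \Cref{Thm:BottomEqn} does give $I(\tau C)_2=H^0(C_2,S_{2,L}(-2\delta_2\cdot 2))=H^0(\mathbb P^2,\OO(d-4))$, and the dimension count, the surjectivity of $\gamma_p$, and the final reformulation of the kernel are all consistent. (One small slip: \Cref{Thm:Prolong} computes $I(\sigma_2\tau C)_3$, not $I(\tau C)_3$; this is harmless because $K_{p,2}$ is unchanged if you map into $\A^{p-1}V\otimes S^3V$ instead of $\A^{p-1}V\otimes I(\tau C)_3$.)

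The genuine gap is that the two ``transport steps'' you defer are the entire theorem. Nothing in the paper's symmetric-power machinery (Section 2.5) realizes a Koszul cohomology group $K_{p,2}$ in homological degree $p>0$ as a single twisted cohomology group on $C_2$: the resolutions $F_{q+1}(B)_\ast$ and \Cref{Lem:Leray} are engineered to prove surjectivity of multiplication maps onto $H^0(S_{q+1,L}(-\ell\delta_{q+1}))$, i.e.\ to control the $p=0$ column, and the Koszul differential $\A^pV\otimes I(\tau C)_2\to\A^{p-1}V\otimes S^3V$ couples data on $C_2$ with the exterior power $\A^pV$ of sections on $C$ itself, so there is no given complex of sheaves on $C_2$ whose hypercohomology computes $K_{p,2}(I(\tau C))$. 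Likewise, the claims that the only surviving contribution is supported on $\Delta_2$, that the auxiliary space collapses to exactly $S^{p+2}U$, and that the connecting map is literally $\gamma_p^\ast\otimes\id$ followed by $\delta_{2,d-p-3}$ are assertions with no mechanism behind them; identifying that differential with the Wahl map is precisely where AFPRW need the defect sheaf $\mathcal F$ and its module structure over the coordinate ring of the cuspidal edge. As written, the proposal is a plausible plan whose load-bearing steps are unproved, and the fallback of a direct linear-algebra verification from the parametrization $x_i=u^i+i\,v\,u^{i-1}$ is not ``routine'' for general $p$ and $d$ — it amounts to recomputing the Koszul module calculation that occupies a substantial part of \cite{MR4022070}.
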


As a corollary, a sophisticated computation of what are known as the Koszul modules yields
$$
\ell(\tau C)=\lfloor d/2\rfloor-2
$$
when $C$ is rational.

With regard to nonrational curves $C$, \Cref{Thm:Main2} gives rise to the lower bound
\begin{equation}\label{Eqn:1/3LB}
\ell(\tau C)\geq\left\lfloor\frac{d-\gon(C)}{3}\right\rfloor-g-1   
\end{equation}
when $L$ has degree $d\geq 3g+\gon(C)+3$, where $\gon(C)$ is the gonality of $C$. Indeed, let $p$ be the right-hand side value in \eqref{Eqn:1/3LB}, and choose line bundles $L_1,\ldots,L_4$ on $C$ so that 
\begin{enumerate}
    \item $L_1$ is a line bundle computing the gonality, and
    \item $L_2,L_3,L_4$ have degree $\geq g+p+1$ under the constraint that $L_2\otimes L_3\otimes L_4=L\otimes L_1^{-1}$.
\end{enumerate}
Then since
$$
h^0(L_i)\geq
\begin{cases}
    2 & \textup{if }i=1 \\
    p+2 & \textup{if }2\leq i\leq 4,
\end{cases}
$$
Condition \eqref{Cond:Nonvanishing} holds for the ordered partition $p_\ast=(p,0,0,0)\vdash p$. In addition, it is conjectured in \cite[Conjecture 4.6]{MR4896737} that
$$
\ell(\tau C)\leq\left\lfloor\frac{d-3g}{2}\right\rfloor-2,
$$
which is based on some computational evidence.

This discussion brings us to the following question.

\begin{Q}
What is the correct asymptotics of $\ell(\tau C)$? How about the vanishing/nonvanishing of $K_{p,q+1}(I(\sigma_q\tau^kC))$?
\end{Q}
One can find that if $\deg L\geq (2k+1)(g+p+q)+\gon^q(C)$ for an integer $p\geq0$, where $\gon^q(C):=\min\{d\geq 1:C\textup{ carries a }g_d^q\}$, then $K_{p,q+1}(I(\sigma_q\tau^kC))\neq0$.

On the other hand, let us look at the initial stage of the minimal free resolution of $I(\tau C)$. Due to \Cref{Thm:EKS} we have already found the source of all the degree $q+1$ equations of $\sigma_q\tau^kC$, but in order to complete the full matryoshka picture of Eisenbud–Koh–Stillman's work, we need to understand not merely the single component $I(\sigma_q\tau^kC)_{q+1}$, but all components of the ideal.

\begin{Q}
Suppose that $L$ is sufficiently positive. Is $I(\sigma_q\tau^kC)$ generated in degree $q+1$?
\end{Q}

Park made a guess \cite[Conjecture 4.7 with (4-3)]{MR4896737} that it is the case for $\tau C\subseteq\mathbb PH^0(L)$ if
$$
 \deg L\geq 5g+5,
$$
and more generally, if $\deg L\geq 4g+3+(g+2)p$ for an integer $p\geq 1$, then $\tau C\subseteq\mathbb PH^0(L)$ satisfies property $N_p$, which means that $K_{i,j}(I(\tau C))=0$ whenever $i\leq p-1$ and $j\geq 3$.

\subsection{Syzygies over exterior algebras}

The theory of syzygies (over the symmetric algebra $S^\ast V$) naturally generalizes to the exterior algebra $E:=\A^{\ast}V$. Let $P$ be a finitely generated graded $E$-module, and consider the $i$-th free module 
$$
\bigoplus_{j\in\mathbb Z}K^{i,j}(P)\otimes E(-i-j)
$$
of the minimal graded $E$-free resolution of $M$, where $K^{i,j}(P)$ is the cohomology group of the Koszul type complex
$$
\begin{tikzcd}
    S^{i+1}V\otimes P_{j-1} \ar[r] & S^iV\otimes P_j \ar[r] & S^{i-1}V\otimes P_{j+1}
\end{tikzcd}
$$
at the middle. Let $F\subseteq\Gr(\mathbb PV):=\bigsqcup_{s=0}^{\dim\mathbb PV}\Gr(\mathbb P^s,\mathbb PV)$ be a subvariety of a Grassmannian for $\mathbb PV$. To it we assign a homogeneous ideal
$$
J(F)=\{w\in E:w|_\Lambda=0\text{ for all }[\Lambda]\in F\}\subseteq E,
$$
where we denote by $w|_\Lambda$ the image of $w$ under the restriction map $\A^{\ast}H^0(\OO_{\mathbb PV}(1))\to\A^{\ast}H^0(\OO_\Lambda(1))$.

Returning to our central objects we define some subvarieties of $\Gr(\mathbb PV)$ by abuse of notation.

\begin{Def}
We write
$$
\sigma_q\tau^kX=\overline{\{[\langle\mathbb T^k_{z_1}X,\ldots,\mathbb T^k_{z_q}X\rangle]:z_1,\ldots,z_q\in X\textup{ are general}\}}\subset\Gr(\mathbb PV).
$$
\end{Def}

Then with the induced exterior ideals $J(\sigma_q\tau^kX)$ our theorems generalize in the exterior setting. For example, we state the following.

\begin{Thm}[Analogue of {\Cref{Thm:Main2}}]
Let $p\geq0$, $q\geq 1$, and $k\geq 0$ be integers and $p_\ast=(p_1,\ldots,p_{2k+1})\vdash p$ be an ordered partition of $p$.  If $T:V_1\otimes\cdots\otimes V_{2k+1}\to V$ is an $X$-multiplicative $(2k+1)$-way tensor of linear forms, and if
$$
\dim V_1\geq p-p_1+q+1,\quad\ldots,\quad\dim V_{2k+1}\geq p-p_{2k+1}+q+1,
$$
then a nontrivial canonical map
$$
b_{p_\ast,q+1}(T):B_{p_\ast,q+1}(V_1,\ldots,V_{2k+1})\to K^{p,q+1}(J(\sigma_q\tau^kX))
$$
is given.
\end{Thm}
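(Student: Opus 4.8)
The plan is to run the proof of \Cref{Thm:Main1,,Thm:Main2} with the exterior algebra in place of the symmetric one; the only structural change is that an odd number $2k+1$ of factors forces the bottom equations $\A^{q+1}V_1\otimes\cdots\otimes\A^{q+1}V_{2k+1}$ to embed into $\A^{q+1}(V_1\otimes\cdots\otimes V_{2k+1})$ rather than into $S^{q+1}(V_1\otimes\cdots\otimes V_{2k+1})$, so that the governing syzygies are the exterior Koszul cohomology classes $K^{p,q+1}$ of the exterior ideal $J$. First I would reduce to the Segre variety $Y=\mathbb PV_1\times\cdots\times\mathbb PV_{2k+1}$. The $X$-multiplicative tensor $T$ yields a linear rational map $T^\ast\colon\mathbb PV\dashrightarrow\mathbb P(V_1\otimes\cdots\otimes V_{2k+1})$ sending $X$ into $Y$; being linear, on general members it carries each span $\Lambda\in\sigma_q\tau^kX\subset\Gr(\mathbb PV)$ into a member of $\sigma_q\tau^kY$. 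Hence the induced map $\A^\ast T\colon\A^\ast(V_1\otimes\cdots\otimes V_{2k+1})\to\A^\ast V$ carries $J(\sigma_q\tau^kY)$ into $J(\sigma_q\tau^kX)$, and since it is compatible with the exterior Koszul differentials it induces a pullback $K^{p,q+1}(J(\sigma_q\tau^kY))\to K^{p,q+1}(J(\sigma_q\tau^kX))$, out of which $b_{p_\ast,q+1}(T)$ is built.

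The second step produces $B_{p_\ast,q+1}(V_1,\ldots,V_{2k+1})$ inside the Segre syzygies $K^{p,q+1}(J(\sigma_q\tau^kY))$. The key input is the exterior analogue of \Cref{Cor:SegreBottomEquation}, namely $J(\sigma_q\tau^kY)_{q+1}=\A^{q+1}V_1\otimes\cdots\otimes\A^{q+1}V_{2k+1}$ together with $J(\sigma_q\tau^kY)_j=0$ for $j\le q$. Granting these, and writing $W=V_1\otimes\cdots\otimes V_{2k+1}$, the complex $S^{p+1}W\otimes J_q\to S^pW\otimes J_{q+1}\to S^{p-1}W\otimes J_{q+2}$ has vanishing left-hand term, so $K^{p,q+1}(J(\sigma_q\tau^kY))=Z^{p,q+1}(W)\cap\bigl(S^pW\otimes(\A^{q+1}V_1\otimes\cdots\otimes\A^{q+1}V_{2k+1})\bigr)$, which is precisely the object $K_{p,q+1}(V_1,\ldots,V_{2k+1})$ that the proof of \Cref{Thm:SegreDecompo}, given there for all $\ell\ge1$, decomposes in its odd case. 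That decomposition, assembled from the iterated products $\boxtimes$, supplies the embedding $B_{p_\ast,q+1}(V_1,\ldots,V_{2k+1})\hookrightarrow K^{p,q+1}(J(\sigma_q\tau^kY))$; for the bare existence of the map only the easy inclusion $\A^{q+1}V_1\otimes\cdots\otimes\A^{q+1}V_{2k+1}\subseteq J(\sigma_q\tau^kY)_{q+1}$ is actually needed.

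Nontriviality then follows as in the proof of \Cref{Thm:Main2}: for each $i$ take the explicit cycle $f_i=u_0^{p_i}\otimes u_{\{0,\ldots,p-p_i+q\}}$ spanning the hook factor $S^{(p_i+1,1,\ldots,1)}V_i$ in a suitable basis, and assemble $f_1\otimes\cdots\otimes f_{2k+1}$ through the iterated $\boxtimes$ products. The interchange partner of \Cref{Thm:GreenLazarsfeld}\ref{Thm:GreenLazarsfeldItem2}, which rests on the $1$-genericity of $T$ via \Cref{Thm:Eisenbud1Generic} (now through the $\edet$-type injection rather than the $\det$ one), guarantees that the resulting class survives differentiation precisely because $f_{e_1+\cdots+e_a}$ does not vanish at general points of $X$. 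Pulling this nonzero cycle back through $\A^\ast T$ exhibits $b_{p_\ast,q+1}(T)$ as a nonzero element of $K^{p,q+1}(J(\sigma_q\tau^kX))$, as required.

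The main obstacle is the exterior bottom equation of the second step. In the symmetric theory \Cref{Thm:BottomEqn} identifies $I(\sigma_q\tau^kY)_{q+1}$ as the $(q+1)$-forms vanishing to order $\ge 2k+2$ along the big diagonal, because $\sigma_q\tau^kY$ is an honest subvariety of projective space. Here, by contrast, $\sigma_q\tau^kY$ is a family of linear spaces in a Grassmannian, and the defining condition is that an exterior $(q+1)$-form restrict to zero on each member $\Lambda$. I expect to reduce the restriction-to-$\Lambda$ condition, via the same analytic-local parametrization of the diagonal and a Laplace-type expansion, to the same $(2k+2)$-fold order-of-vanishing condition, and then to read off $\A^{q+1}V_1\otimes\cdots\otimes\A^{q+1}V_{2k+1}$ using the map $\edet$ and the exterior prolongation of \Cref{Lem:WedgeProlong}. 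A secondary technical point, handled exactly as in the symmetric proof, is the indeterminacy of $T^\ast$ and the passage to general members of the Grassmannian families.
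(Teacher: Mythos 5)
Your overall architecture is the intended one: pull back along the linear map $T^\ast$ into the $(2k+1)$-factor Segre $Y$, identify $K^{p,q+1}(J(\sigma_q\tau^kY))$ with the odd-$\ell$ space $K_{p,q+1}(V_1,\ldots,V_{2k+1})$ already decomposed in the proof of \Cref{Thm:SegreDecompo}, and get nontriviality from the interchange partner of \Cref{Thm:GreenLazarsfeld}\ref{Thm:GreenLazarsfeldItem2} applied to the explicit hook cycles. But the step you yourself flag as the main obstacle --- the exterior analogue of \Cref{Cor:SegreBottomEquation}, namely $J(\sigma_q\tau^kY)_{q+1}=\A^{q+1}V_1\otimes\cdots\otimes\A^{q+1}V_{2k+1}$ --- is not proved, and the reduction you propose for it is wrong as stated. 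You claim the restriction-to-$\Lambda$ condition reduces to ``the same $(2k+2)$-fold order-of-vanishing condition'' along the big diagonal. For an \emph{alternating} form the correct order is $2k+1$, not $2k+2$: in \Cref{Lem:BottomEqnLem} the passage from the range $|\beta|+|\gamma|\leq 2k$ to order $2k+2$ uses the substitution $(\beta',\gamma')=(\gamma,\beta)$ at level $\ell=2k+1$ together with the \emph{symmetry} of $Q$, producing $Q(\partial^\beta\phi,\partial^\gamma\phi)=(-1)^{|\beta|-|\gamma|}Q(\partial^\beta\phi,\partial^\gamma\phi)=-Q(\partial^\beta\phi,\partial^\gamma\phi)$; for an antisymmetric $w$ the extra sign cancels the parity of $\ell=2k+1$ and the identity becomes vacuous. (Check $k=q=1$, $X$ a curve: $w(\phi,\phi')\equiv 0$ forces $w(\phi,\phi'')=0$ but leaves $w(\phi,\phi''')=-w(\phi',\phi'')$ free, so the order is exactly $3=2k+1$.)

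This is not a cosmetic slip: with threshold $2k+2$ and only $2k+1$ factors, the K{\"u}nneth computation in the proof of \Cref{Cor:SegreBottomEquation} returns zero (some factor would have to vanish to order $\geq 2$ along its diagonal), so even the ``easy inclusion'' $\A^{q+1}V_1\otimes\cdots\otimes\A^{q+1}V_{2k+1}\subseteq J(\sigma_q\tau^kY)_{q+1}$ that your construction of $b_{p_\ast,q+1}(T)$ rests on would fail. With the correct threshold $2k+1$ the arithmetic closes ($2k+1$ factors each contributing order one), and your outline goes through; but you must also supply (i) the reduction of ``$w|_\Lambda=0$ for $\Lambda$ a span of $q$ osculating spaces'' to the $q=1$ diagonal condition, which needs a pigeonhole/multilinearity argument plus the exterior prolongation of \Cref{Lem:WedgeProlong} in place of \Cref{Thm:Prolong}, and (ii) the $\edet$-analogue of the injectivity statement in \Cref{Thm:Eisenbud1Generic} that your nontriviality step invokes in passing; neither is in the paper and neither is automatic.
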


Also, $K^{p,q+1}(U,W)$ has a meaning in terms of exterior syzygies. Let $Y=\mathbb PU\times\mathbb PW\subseteq\mathbb P(U\otimes W)$ be the Segre variety with two factors, and define
$$
\sigma_q(Y/\mathbb PW)=\{\mathbb P(U\otimes\overline{W}):\overline{W}\twoheadleftarrow W\textup{ is a quotient of dimension }q\}\subset\Gr(\mathbb P(U\otimes W)).
$$

\begin{Prop}
Let $Y$ be as above, and take integers $p\geq 0$ and $q\geq 1$. Then $K^{p,q+1}(J(\sigma_q(Y/\mathbb PW)))$ is isomorphic to $K^{p,q+1}(U,W)$.    
\end{Prop}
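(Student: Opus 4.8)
The plan is to reduce the statement to the elementary observation that the exterior Koszul complex of $J:=J(\sigma_q(Y/\mathbb PW))$ computing $K^{p,q+1}(J)$ collapses to a single kernel, after first pinning down the low-degree graded pieces of the ideal $J$. First I would unwind the geometry: a point of $\sigma_q(Y/\mathbb PW)$ is a linear subspace $\Lambda=\mathbb P(U\otimes\overline W)\subseteq\mathbb P(U\otimes W)$ determined by a rank-$q$ quotient $\pi\colon W\twoheadrightarrow\overline W$, and the restriction of linear forms $U\otimes W\to U\otimes\overline W$ is exactly $\id_U\otimes\pi$. Extending to exterior powers, we obtain $J_m=\bigcap_\pi\ker\bigl(\A^m(\id_U\otimes\pi)\colon\A^m(U\otimes W)\to\A^m(U\otimes\overline W)\bigr)$, where $\pi$ runs over all rank-$q$ quotients of $W$.

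The crux is computing these intersections, which I would do with the Cauchy–Littlewood formula (\Cref{Thm:CL}). Decomposing $\A^m(U\otimes W)=\bigoplus_{\lambda\vdash m}S^\lambda U\otimes S^{\lambda'}W$, the map $\A^m(\id_U\otimes\pi)$ is $\bigoplus_\lambda\id_{S^\lambda U}\otimes S^{\lambda'}(\pi)$, and on each summand the intersection $\bigcap_\pi\ker S^{\lambda'}(\pi)$ is a $\GL(W)$-subrepresentation of the irreducible module $S^{\lambda'}W$, since $\GL(W)$ permutes the quotients $\pi$. Hence it is either $0$ or all of $S^{\lambda'}W$; the latter occurs precisely when $S^{\lambda'}\overline W=0$, i.e. when $\lambda'$ has more than $q=\dim\overline W$ rows, i.e. $\lambda_1>q$. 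For $m\le q$ every $\lambda\vdash m$ has $\lambda_1\le q$, so $J_m=0$; for $m=q+1$ the only partition with $\lambda_1>q$ is $\lambda=(q+1)$, giving $J_{q+1}=S^{q+1}U\otimes\A^{q+1}W$. This is the exterior, ``permanent'' counterpart of \Cref{Cor:SegreBottomEquation}.

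With these pieces in hand the conclusion is essentially formal. Since $J$ is an ideal of $E=\A^\ast(U\otimes W)$, the differential out of $S^p(U\otimes W)\otimes J_{q+1}$ in the complex computing $K^{p,q+1}(J)$ is the restriction of the Koszul differential $\delta^{p,q+1}$, and its image automatically lies in $S^{p-1}(U\otimes W)\otimes J_{q+2}$ by the ideal property, so that no separate computation of $J_{q+2}$ is needed. Because $J_q=0$, the incoming differential $S^{p+1}(U\otimes W)\otimes J_q\to S^p(U\otimes W)\otimes J_{q+1}$ vanishes, and therefore
$$
K^{p,q+1}(J)=\ker\Bigl(\delta^{p,q+1}\big|_{S^p(U\otimes W)\otimes(S^{q+1}U\otimes\A^{q+1}W)}\Bigr)=Z^{p,q+1}(U\otimes W)\cap\bigl(S^p(U\otimes W)\otimes(S^{q+1}U\otimes\A^{q+1}W)\bigr),
$$
which is exactly $K^{p,q+1}(U,W)$ by definition; the isomorphism is in fact an equality of subspaces of $S^p(U\otimes W)\otimes\A^{q+1}(U\otimes W)$.

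The only genuinely substantive step is the degree-$(q+1)$ computation $J_{q+1}=S^{q+1}U\otimes\A^{q+1}W$, and the main point to get right there is that the intersection of kernels over \emph{all} rank-$q$ quotients kills every Schur component except the one already annihilated for dimension reasons. The $\GL(W)$-irreducibility argument is what makes this clean, letting us avoid any explicit manipulation of semistandard tableaux or of the individual quotient maps.
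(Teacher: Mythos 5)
Your argument is correct. Note that the paper states this Proposition without proof, so there is nothing to compare against line by line; but your route is exactly the one the paper's own toolkit suggests, and every step holds up. The reduction of $J_m$ to $\bigcap_\pi\ker\A^m(\id_U\otimes\pi)$ is the right reading of the definition of $J(F)$, the Cauchy--Littlewood decomposition (\Cref{Thm:CL}) is natural in $W$ so the map really is $\bigoplus_\lambda\id_{S^\lambda U}\otimes S^{\lambda'}(\pi)$, and the $\GL(W)$-irreducibility argument correctly forces each summand's intersection of kernels to be all or nothing, giving $J_{\leq q}=0$ and $J_{q+1}=S^{q+1}U\otimes\A^{q+1}W$ --- the $\edet$-side analogue of \Cref{Cor:SegreBottomEquation}. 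From there the identification of $K^{p,q+1}(J)$ with $Z^{p,q+1}(U\otimes W)\cap\bigl(S^p(U\otimes W)\otimes(S^{q+1}U\otimes\A^{q+1}W)\bigr)=K^{p,q+1}(U,W)$ is formal, using $J_q=0$ and the fact that enlarging the codomain from $S^{p-1}(U\otimes W)\otimes J_{q+2}$ to $S^{p-1}(U\otimes W)\otimes\A^{q+2}(U\otimes W)$ does not change the kernel. No gaps.
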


Accordingly, it can be said that our proofs implicitly pass through the exterior world. In particular, the Green-Lazarsfeld products $\boxtimes$ and their symmetric-symmetric versions lead to the following decompositions that display hidden interplays between symmetric and exterior bottom syzygies of $\sigma_q\tau^kY$ for $(2k+2)$- or $(2k+1)$-factor Segre varieties $Y$ with $q\geq1$ and $k\geq0$ varying. 
\begin{enumerate}
\item If $Y=Y_1\times Y_2:=(\mathbb PU_1\times\cdots\times\mathbb PU_{2k_1+1})\times(\mathbb PW_1\times\cdots\times\mathbb PW_{2k_2+1})\subseteq\mathbb P(U_1\otimes\cdots\otimes U_{2k_1+1}\otimes W_1\otimes\cdots\otimes W_{2k_2+1})$ has $2k+2:=2(k_1+k_2)+2$ factors of projective spaces, then
$$
K_{c,q+1}(I(\sigma_q\tau^kY))=\bigoplus_{a+b=c}K^{a,b+q+1}(J(\sigma_{b+q+1}\tau^{k_1}Y_1))\otimes K^{b,a+q+1}(J(\sigma_{a+q+1}\tau^{k_2}Y_2)).
$$
\item If $Y=Y_1\times Y_2:=(\mathbb PU_1\times\cdots\times\mathbb PU_{2k_1+2})\times(\mathbb PW_1\times\cdots\times\mathbb PW_{2k_2+1})\subseteq\mathbb P(U_1\otimes\cdots\otimes U_{2k_1+2}\otimes W_1\otimes\cdots\otimes W_{2k_2+1})$ has $2k+1:=2(k_1+k_2+1)+1$ factors of projective spaces, then
$$
K^{c,q+1}(J(\sigma_q\tau^kY))=\bigoplus_{a+b=c}K_{a,b+q+1}(I(\sigma_{b+q+1}\tau^{k_1}Y_1))\otimes K^{b,a+q+1}(J(\sigma_{a+q+1}\tau^{k_2}Y_2)).
$$
\item If $Y=Y_1\times Y_2:=(\mathbb PU_1\times\cdots\times\mathbb PU_{2k_1+2})\times(\mathbb PW_{3}\times\cdots\times\mathbb PW_{2k_2+2})\subseteq\mathbb P(U_1\otimes\cdots\otimes U_{2k_1+2}\otimes W_1\otimes\cdots\otimes W_{2k_2+2})$ has $2k+2:=2(k_1+k_2+1)+2$ factors of projective spaces, then
$$
K_{c,q+1}(I(\sigma_q\tau^kY))=\bigoplus_{a+b=c}K_{a,b+q+1}(I(\sigma_{b+q+1}\tau^{k_1}Y_1))\otimes K_{b,a+q+1}(I(\sigma_{a+q+1}\tau^{k_2}Y_2)).
$$
\end{enumerate}

\bigskip

\noindent{\textbf{Acknowledments}.} We would like to thank Daniele Agostini and Jinhyung Park for valuable discussions and for kindly sharing their manuscript. The author is supported by a KIAS Individual Grant (MG083302) at Korea Institute for Advanced Study.

\bibliographystyle{amsalpha}
\bibliography{references}

\end{document}